\documentclass{amsart}
\usepackage{amssymb,amsfonts}
\usepackage{latexsym}
\usepackage{amsmath,mathrsfs}
\usepackage[all,arc]{xy}
\usepackage{enumerate}
\usepackage[english]{babel}
\usepackage[bookmarks=true]{hyperref}
\usepackage{tikz}
\usepackage{tikz-cd}
\usepackage{blkarray}
\newtheorem{theo}{Theorem}[section]
\newtheorem{coro}[theo]{Corollary}
\newtheorem{prop}[theo]{Proposition}
\newtheorem{lemm}[theo]{Lemma}

\theoremstyle{definition}
\newtheorem{defi}[theo]{Definition}

\theoremstyle{remark}
\newtheorem{rema}[theo]{Remark}

\bibliographystyle{plain}

\newcommand{\bb}[1]{\mathbb{#1}}
\newcommand{\al}[1]{\mathcal{#1}}
\newcommand{\sr}[1]{\mathscr{#1}}

\newcommand{\ra}{\rightarrow}

\newcommand{\x}[1]{\text{#1}}


\title{Hecke transformation for orthogonal bundles over curves}
\author{Christian Pauly}
\address{Laboratoire J.A. Dieudonnée, Université Côte d'Azur, Parc Valrose, 06108 Nice
Cedex 02, France}
\email{pauly@unice.fr}

\author{Hacen Zelaci}
\address{Mathematical departement, El Oued University, N48, 39000, EL Oued, Algeria}
\email{zelaci-hacen@univ-eloued.dz}

\date{}

\begin{document}
\begin{abstract}
    Given an orthogonal bundle $E$ over a smooth projective curve $X$ we define a Hecke transformation in the moduli space of orthogonal bundles by performing an elementary transformation with respect to a Lagrangian submodule $L \subset E_{2x}$ at some point $x \in X$. We show that the analogue of Tyurin's duality theorem holds for 
    orthogonal bundles. Special cases of orthogonal bundles of ranks $2,3,4$ and $6$ are studied in detail. 
\end{abstract}
\maketitle

\section{Introduction}
Hecke transformation is a powerful tool in studying the geometry of moduli spaces of vector bundles over curves, and it is a source of numerous deep results. Since its introduction in the 1970s \cite{NR}, \cite{NR2}, \cite{Ty}, it has been studied intensively, see for example \cite{Hw}, \cite{HwRa}, \cite{S}. Hecke transformations and the induced Hecke correspondences between vector bundles are also one of the fundamental concepts appearing in the formulation of the geometric Langlands correspondence \cite{Frenkel2007}.

In this paper, we introduce a novel definition of Hecke transformation for orthogonal bundles, extending previous  constructions given in \cite{BG}, \cite{Abe} and \cite{CCL}. In order to state the main result we need to introduce some notation.
Let $X$ be a smooth projective curve of genus $g$ defined over an algebraically closed field $\mathbb{K}$ of characteristic $\not=2$ and $E$ an orthogonal bundle over $X$ of rank $r$, i.e.,  a vector bundle with a 
non-degenerate symmetric bilinear form  $b :E \times E \longrightarrow  \al O_X$. Then $b$ 
induces an isomorphism 
$E \stackrel{\sim}{\rightarrow} E^*$. Fix a point $x \in X$ and consider its local ring 
$\mathcal{O} := \mathcal{O}_{X,x}$ with maximal ideal $\mathfrak{M}$ and the stalk $\widehat{E}_x$ of the vector bundle
$E$ at $x$. Then $E_{2x} = \widehat{E}_x \otimes \mathcal{O}/\mathfrak{M}^2$ is a free rank $r$ module
over the ring of dual numbers $\mathbb{K}[\epsilon] = \mathcal{O}/\mathfrak{M}^2$ and the restriction of $b$ endows this module with a 
non-degenerate quadratic form. Our first result is

\begin{theo}\label{maintheorem}
Let $L\subset E_{2x}$ be a Lagrangian submodule and denote by $E'$ the kernel subsheaf of $E$
$$E' = \mathrm{ker} \left(E \twoheadrightarrow E_{2x}/L \right).$$ 
Then $\mathcal{H}(E, L) := E' \otimes \mathcal{O}(x)$ is an orthogonal bundle. 
\end{theo}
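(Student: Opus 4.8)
The plan is to work locally at $x$ and to exhibit the form on $\mathcal{H}(E,L)$ as nothing but the restriction of the rational extension of $b$. Write $\al O = \al O_{X,x}$, choose a uniformizer $t \in \ak M$ (so that $\bb K[\epsilon] = \al O/t^2$ with $\epsilon = \bar t$), put $V = \widehat E_x$ and let $W \subseteq V$ be the preimage of $L$ under the surjection $V \twoheadrightarrow V/t^2V = E_{2x}$, so that $\widehat{E'}_x = W$ and $t^2 V \subseteq W \subseteq V$. Off $x$ one has $E' = E$, hence $\mathcal{H}(E,L) = E$ there, whereas the stalk of $\mathcal{H}(E,L) = E' \otimes \al O(x)$ at $x$ is $t^{-1}W \subseteq V \otimes_{\al O} K$, $K$ the fraction field. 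Thus $\mathcal{H}(E,L)$ is realized as the coherent subsheaf of the constant sheaf of rational sections of $E$ which agrees with $E$ on $X \sm \{x\}$ and has stalk $t^{-1}W$ at $x$, and I propose to equip it with the restriction $b'$ of the unique $K$-bilinear extension of $b$. The whole statement then reduces to two local claims at $x$: (1) $b'$ is $\al O_X$-valued, i.e.\ $b(W,W) \subseteq t^2\al O$; and (2) $b'$ is non-degenerate, i.e.\ the lattice $M := t^{-1}W$ equals its dual lattice $M^\# := \{\, v \in V\otimes K : b(v,M) \subseteq \al O \,\}$. (That non-degeneracy of $b$ over $K$ induces an isomorphism $M^\# \xrightarrow{\sim} \mathrm{Hom}_{\al O}(M,\al O)$ under which the form-induced map $M \to M^*$ becomes the inclusion $M \hookrightarrow M^\#$ is routine; hence $M = M^\#$ is exactly non-degeneracy, and off $x$ non-degeneracy is automatic.)

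For (1): given $w, w' \in W$, the element $b(w,w') \in \al O$ reduces mod $t^2$ to $\bar b(\bar w, \bar w')$ with $\bar w, \bar w' \in L$, so $b(W,W) \subseteq t^2\al O$ is equivalent to the vanishing of $\bar b$ on $L$, i.e.\ to $L$ being isotropic — which holds since $L$ is Lagrangian (i.e.\ $L = L^\perp$ for $\bar b$). For (2): from $t^2 V \subseteq W$, $b(V,V) \subseteq \al O$ and the non-degeneracy of $b$ on $V$ one first checks $tV \subseteq M^\# \subseteq t^{-1}V$; writing a general element of $M^\#$ as $t^{-1}u$ with $u \in V$, the condition $b(t^{-1}u,\, t^{-1}W) \subseteq \al O$ rewrites as $b(u,W) \subseteq t^2\al O$, which, exactly as in (1), says $\bar u \in L^\perp$. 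Hence $M^\#$ is $t^{-1}$ times the preimage of $L^\perp$ in $V$, while $M = t^{-1}W$ is $t^{-1}$ times the preimage of $L$; since taking preimages is injective on submodules of $E_{2x}$, we conclude $M = M^\#$ if and only if $L = L^\perp$. Combining (1) and (2), the Lagrangian hypothesis gives that $b'$ is a non-degenerate symmetric $\al O_X$-valued bilinear form on $\mathcal{H}(E,L)$; since $b'$ coincides with $b$ on $X \sm \{x\}$ the local descriptions glue, and $\mathcal{H}(E,L)$ is an orthogonal bundle.

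The points I would still need to pin down carefully are mostly formal: that $E_{2x}$ carrying a \emph{non-degenerate} quadratic form over $\bb K[\epsilon]$ is precisely what makes $L^\perp$ well behaved — in particular that $L^\perp$ is again a $\bb K[\epsilon]$-submodule of $E_{2x}$, so the preimage computation in (2) applies verbatim — and the bookkeeping of the line-bundle twist by $\al O(x)$ when verifying that the locally defined $b'$ patches to a global form (equivalently, that $b^\flat \colon E \to E^*$ factors, after restricting to $E'$, through $(E')^* \otimes \al O(-2x) \subseteq (E')^*$, which is just (1) again). The only genuinely delicate step is (2), the dual-lattice identification $M^\# = t^{-1}\cdot(\text{preimage of }L^\perp)$; once it is in place the theorem is immediate, and the argument in fact yields the converse as well, so that "$L$ is Lagrangian" is equivalent to "$\mathcal{H}(E,L)$ is orthogonal".
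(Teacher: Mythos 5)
Your proof is correct, and it takes a genuinely different route from the paper's. The paper argues globally and by dualization: from the exact sequence $0 \to E(-2x) \to E' \to L \to 0$ it produces $0 \to \left(E'(2x)\right)^* \to E^* \to L^* \to 0$, uses the Lagrangian condition to get the $\mathbb{K}[\epsilon]$-linear identification $E_{2x}/L \cong L^*$, and then reads off an isomorphism $E' \cong \left(E'(2x)\right)^*$ from a commutative diagram comparing this with the defining sequence $0 \to E' \to E \to E_{2x}/L \to 0$; the quadratic form on $\mathcal{H}(E,L)$ is only implicit there. You instead make the form completely explicit as the restriction of the rational extension of $b$ and reduce everything to the two lattice conditions $b(W,W)\subseteq t^2\mathcal{O}$ and $M=M^\#$, which you correctly identify with $L\subseteq L^\perp$ and $L^\perp\subseteq L$. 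Both arguments ultimately rest on the same fact, $L=L^\perp$ (equivalently, for the paper, $E_{2x}/L\cong L^*$); your version buys self-containedness, an explicit symmetric form, and the converse statement, at the cost of the lattice bookkeeping, while the paper's is shorter but silently invokes local duality $\mathcal{E}xt^1(L,\mathcal{O}_X)\cong L^*$ and the symmetry of the induced pairing. The two loose ends you flag are genuinely routine: $\dim_{\mathbb{K}}L^\perp = 2r-\dim_{\mathbb{K}}L$ holds because $\mathbb{K}[\epsilon]$ is self-injective (or, more concretely, because for a $\mathbb{K}[\epsilon]$-submodule $L$ the $\mathbb{K}[\epsilon]$-orthogonal coincides with the orthogonal for the non-degenerate $\mathbb{K}$-bilinear form given by the $\epsilon$-coefficient of $b_2$, which is hyperbolic on the $2r$-dimensional space $E_{2x}$), and $L^\perp$ is automatically a $\mathbb{K}[\epsilon]$-submodule by $\mathbb{K}[\epsilon]$-bilinearity of $b_2$, so your preimage computation applies as stated.
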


The subvariety ${\rm O}\Sigma(E_{2x})$ of the orthogonal Grassmannian
$\mathrm{OGr}(r,E_{2x})$ parameterizing Lagrangian $\mathbb{K}[\epsilon]$-submodules of $E_{2x}$ admits (Proposition \ref{decOsigma}) a stratification $${\rm O}\Sigma(E_{2x}) = \bigsqcup_{i= 0}^k {\rm O}\Sigma_i(E_{2x}),$$ with $k = \lfloor \frac{r}{2} \rfloor$, where each stratum ${\rm O}\Sigma_i(E_{2x})$ is the total space of a vector bundle over the orthogonal Grassmannian $\mathrm{OGr}(i,E_{x})$. We also show in subsection 4.3 that the orthogonal Grassmannian $\mathrm{OGr}(2,E_{x})$
parameterizes the orthogonal analogue of Hecke curves.

We then construct a natural analogue of the Hecke correspondence for orthogonal bundles. To do so, we introduce an open subset 
$M^0$ of the moduli space of semi-stable special orthogonal bundles $\mathcal{M}(\mathrm{SO}_r)$ and construct a parameter space 
${\rm O}\Sigma^0$ of Hecke data with two morphisms
$$
\xymatrix{
& {\rm O}\Sigma^0 \ar[ld]_\pi \ar[rd]^{\pi'} & \\
M^0 & & M^0
}
$$
Then ${\rm O}\Sigma^0$ is a double fibration over $M^0$ with the property that for any orthogonal bundle $E \in M^0$ the
fiber $\pi^{-1}(E)$ is an open subset in ${\rm O}\Sigma_k(E_{2x}) \cup {\rm O}\Sigma_{k-1}(E_{2x})$. The morphism
$\pi'$ associates to a pair $(E,L)$ the Hecke transform $\mathcal{H}(E,L)$ defined in Theorem 1.1. Note that $M^0$, as well
as the fibers $\pi^{-1}(E)$ for any $E \in M^0$, have two connected components. In Proposition \ref{StiefelwhitneyHEL} we express
the connected component containing $\mathcal{H}(E,L)$ in terms of those containing $E$ and $L$.

Finally we show that Tyurin's duality theorem  (\cite[\S 5.2]{Ty}) for ordinary vector bundles can be adapted to orthogonal bundles.

\begin{theo}
The relative tangent bundles over $\rm{O}\Sigma^0$ of the two fibrations $\pi$ and $\pi'$ are dual to each other, i.e. we have
an isomorphism 
$$T_{\pi} = T^*_{\pi'}.$$
\end{theo}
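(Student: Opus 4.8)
The plan is to compute the two relative tangent bundles fiber by fiber over a point $(E,L)\in{\rm O}\Sigma^0$, to exhibit a canonical perfect pairing between the fibers, and then to check that the construction is sufficiently functorial to globalize; this is the local, $\mathbb{K}[\epsilon]$-module incarnation of Tyurin's Serre-duality argument. Write $R=\mathbb{K}[\epsilon]=\mathcal{O}/\mathfrak{M}^2$, let $W=E_{2x}$ with its nondegenerate $R$-valued symmetric $R$-bilinear form $b$, and put $E''=\mathcal{H}(E,L)$, $W''=E''_{2x}$; after shrinking ${\rm O}\Sigma^0$ if necessary we may assume $\pi$ and $\pi'$ are smooth, so that $T_\pi$ and $T_{\pi'}$ are vector bundles. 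Since $\pi^{-1}(E)$ is open in ${\rm O}\Sigma(W)={\rm O}\Sigma_k(W)\cup{\rm O}\Sigma_{k-1}(W)$ we have $T_\pi|_{(E,L)}=T_L\,{\rm O}\Sigma(W)$, and, as $\pi'$ is by construction a fibration of the same type, $T_{\pi'}|_{(E,L)}=T_{L'}\,{\rm O}\Sigma(W'')$, where $L'\subset W''$ is the unique Lagrangian submodule with $\mathcal{H}(E'',L')=E$; unwinding the definitions, one has $E(-x)\subseteq E''$ with $L'$ the image of $E(-x)$ in $W''$, so that $L'\cong E(-x)/E'(-x)$ and $W''/L'\cong E''/E(-x)$, both isomorphic — up to a twist by a power of the line $T_x X$ (tracked below) — to modules built out of $L$ and $W/L$.

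Next I would carry out the standard first-order computation identifying $T_L\,{\rm O}\Sigma(W)$ with the space $\mathrm{Alt}_R(L)=\mathrm{Hom}_R(\wedge^2_R L,R)$ of alternating $R$-bilinear forms on $L$: a first-order deformation of the submodule $L$ is an $R$-linear map $\phi\colon L\to W/L$, and the requirement that $b$ stay identically zero along the deformation reads $b(l_1,\phi(l_2))+b(l_2,\phi(l_1))=0$; transporting $\phi$ through the isomorphism $W/L\xrightarrow{\ \sim\ }\mathrm{Hom}_R(L,R)$ induced by $b$ — an isomorphism because $b$ is nondegenerate, $L=L^{\perp}$, and both sides have the same finite $\mathbb{K}$-length — this says exactly that $(l_1,l_2)\mapsto b(l_1,\phi(l_2))$ is alternating. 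Hence $T_\pi|_{(E,L)}\cong\mathrm{Alt}_R(L)$, and in the same way $T_{\pi'}|_{(E,L)}\cong\mathrm{Alt}_R(L')$.

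The core of the proof is the comparison of $\mathrm{Alt}_R(L)$ with $\mathrm{Alt}_R(L')$. From the description of $L'$ above together with a careful bookkeeping of the $\mathcal{O}(x)$-twists built into $\mathcal{H}$, one obtains a canonical $R$-module isomorphism $L'\cong W/L$; combined with the self-duality $W/L\cong\mathrm{Hom}_R(L,R)=:L^{\vee}$ induced by $b$, this gives $T_{\pi'}|_{(E,L)}\cong\mathrm{Alt}_R(L^{\vee})$. Here one uses that $R=\mathbb{K}[\epsilon]$ is a $0$-dimensional Gorenstein ring, so that $\mathrm{Hom}_R(-,R)$ agrees with Matlis duality $\mathrm{Hom}_{\mathbb{K}}(-,\mathbb{K})$, every finitely generated module is reflexive, and $\wedge^2_R$ commutes with duality. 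Combining the contraction $\wedge^2_R L\otimes_R\wedge^2_R L^{\vee}\to R$ with Matlis duality then produces a canonical \emph{perfect} $\mathbb{K}$-bilinear pairing
$$
\mathrm{Alt}_R(L)\times\mathrm{Alt}_R(L^{\vee})\longrightarrow\mathbb{K}.
$$
Over the open locus where $L$ is free, this is the familiar perfect pairing $\wedge^2_R L^{\vee}\times\wedge^2_R L\to R$ followed by the perfect $\mathbb{K}$-bilinear form $(u,v)\mapsto[\epsilon](uv)$ on $R$, and the general case reduces to it by self-injectivity of $R$. This yields the fiberwise isomorphism $T_\pi\cong T^*_{\pi'}$.

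Finally I would observe that every ingredient — the tautological Lagrangian submodules over ${\rm O}\Sigma^0$, the form $b$, the isomorphisms $L'\cong W/L$ and $W/L\cong L^{\vee}$, and the pairing — is a morphism of sheaves in families over ${\rm O}\Sigma^0$, so the fiberwise isomorphism is an isomorphism of vector bundles $T_\pi=T^*_{\pi'}$. I expect the main obstacle to be the step making $L'$ and $W''$ explicit in terms of $(W,L)$ and reconciling the $\mathcal{O}(x)$-twists: one must check that the twist by $T_x X$ accumulated on $L'$ is exactly cancelled by the one on $W''/L'$, so that the resulting pairing is genuinely $\mathbb{K}$-valued, and that the whole identification is uniform over the two strata ${\rm O}\Sigma_k$ and ${\rm O}\Sigma_{k-1}$ — already for $r$ odd the module $L$ on the top stratum carries a $\mathbb{K}[\epsilon]/(\epsilon)$-summand, so one cannot argue with $R$-bases and must rely throughout on the Gorenstein-duality formalism.
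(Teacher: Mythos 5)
Your strategy --- compute $T_\pi$ and $T_{\pi'}$ fiberwise as spaces of skew-symmetric $\mathbb{K}[\epsilon]$-linear maps and exhibit a perfect $\mathbb{K}$-valued pairing via Gorenstein/Matlis duality --- is genuinely different from the paper's, which constructs a universal family $\mathcal{F}$ over ${\rm O}\Sigma^0\times X$, shows that $\pi'$ factors as $\pi\circ\psi$ for the automorphism $\psi(E,L)=(\mathcal{H}(E,L),L^*)$, and then reads off $T_\pi=\mathrm{Hom}^{0,\mathrm{skew}}_{\mathcal{O}[\epsilon]}(\mathcal{Q}^*,\mathcal{Q})$ and $T_{\pi'}=\psi^*T_\pi=\mathrm{Hom}^{0,\mathrm{skew}}_{\mathcal{O}[\epsilon]}(\mathcal{Q},\mathcal{Q}^*)$. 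Your route could be made to work, but as written it has a genuine gap at the very first step, the identification of the relative tangent spaces.

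You identify $T_\pi|_{(E,L)}$ with $\mathrm{Alt}_R(L)\cong\mathrm{Hom}^{\mathrm{skew}}_{\mathbb{K}[\epsilon]}(L,L^*)$, i.e.\ with \emph{all} skew-symmetric $\mathbb{K}[\epsilon]$-linear maps $L\to W/L\cong L^*$. Your first-order computation produces the Zariski tangent space to the scheme-theoretic locus $\{\,\epsilon L\subset L,\ L\ \text{isotropic}\,\}$ inside $\mathrm{OGr}(r,E_{2x})$, but this is strictly larger than the tangent space to the reduced, smooth variety ${\rm O}\Sigma_l(E_{2x})$ whenever $L$ is not free. By Proposition~\ref{largeststratumissmooth} the correct tangent space is the subspace $\mathrm{Hom}^{0,\mathrm{skew}}_{\mathbb{K}[\epsilon]}(L,L^*)$ of skew maps whose induced map $\phi_0$ on the torsion quotient $L^{(2)}/L^{(1)}\cong\mathbb{K}^{g}$ vanishes; for a skew map this extra condition kills an alternating form on $\mathbb{K}^g$, i.e.\ a $\binom{g}{2}$-dimensional piece. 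On the component ${\rm O}\Sigma_{k-1}$ of ${\rm O}\Sigma^0$ one has $g=r-2(k-1)\in\{2,3\}$, so $\binom{g}{2}\geq 1$ and your candidate space has the wrong dimension: writing $f=k-1$, one checks
$$\dim {\rm O}\Sigma_{k-1}(E_{2x})=(k-1)(r-k)=f(f-1)+fg,\qquad \dim\mathrm{Alt}_R(L)=f(f-1)+fg+\tbinom{g}{2}.$$
(On ${\rm O}\Sigma_k$ one has $g\in\{0,1\}$ and the two spaces coincide, which is why the discrepancy is easy to miss; your closing caveat about the $\mathbb{K}$-summand for $r$ odd points at the right phenomenon but on the harmless stratum.) Consequently the perfect pairing you build relates the wrong vector bundles on half of ${\rm O}\Sigma^0$. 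The repair is to impose $\phi_0=0$ on both sides and verify that your contraction pairing restricts to a perfect pairing between $\mathrm{Hom}^{0,\mathrm{skew}}_{\mathbb{K}[\epsilon]}(L,L^*)$ and $\mathrm{Hom}^{0,\mathrm{skew}}_{\mathbb{K}[\epsilon]}(L^*,L)$ --- which is exactly the duality invoked at the end of the paper's proof.
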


The paper is organized as follows: in Section 2 we start with studying $\mathbb{K}[\epsilon]$-submodules 
of the free $\mathbb{K}[\epsilon]$-module $V \oplus \epsilon V$ for a $\mathbb{K}$-vector space $V$ and show 
that the parameter space $\Sigma \subset \mathrm{Gr}(n,V \oplus \epsilon V)$ of such $\mathbb{K}[\epsilon]$-submodules admits a natural stratification (Proposition \ref{decsigma}). In Section 3 we introduce Lagrangian $\mathbb{K}[\epsilon]$-submodules of the quadratic module $E_{2x}$
and give the orthogonal analogue (Proposition \ref{decOsigma}) of the stratification of the parameter space ${\rm O}\Sigma(E_{2x}) \subset \mathrm{OGr}(r,E_{2x})$. In Section 4 we prove Theorem 1.1 and work out the orthogonal analogue of a Hecke curve. In Section 5 we give
an explicit description of the Hecke transformation for low rank vector bundles. Finally, in Section 6, we prove the
analogue of Tyurin's duality theorem.

We expect these constructions to adapt as well to symplectic bundles and to anti-invariant bundles, as introduced and
studied in \cite{Zel} and \cite{Z2}. We plan to address these questions in a future work.

\bigskip

{\bf Acknowledgments.} The second author acknowledges funding from the CNRS (Chaire Maurice Audin) for a research visit 
at the Laboratoire J.A. Dieudonné at the Université Côte d'Azur in November 2022, when this project was started. We would like to thank 
Vladimiro Benedetti and Indranil Biswas for useful discussions. 

\bigskip

\section{Hecke transformation and modules over the ring of dual numbers}

\subsection{Preliminaries on modules over the ring of dual numbers}
Let $x$ be a point of $X$ and let $\mathcal{O} := \mathcal{O}_{X,x}$ be the local ring of the curve $X$ at the point $x$ and $\mathfrak{M} 
\subset \mathcal{O}$ its maximal ideal. Then $\mathcal{O}/\mathfrak{M} = \mathbb{K}$ and $\mathcal{O}/\mathfrak{M}^2 = \mathbb{K}[\epsilon]$ with 
$\epsilon^2=0$, the ring of dual numbers over $\mathbb{K}$. Note that the last equality depends on the choice of a uniformizing parameter in 
$\mathcal{O}$, which is mapped to $\epsilon$. We fix such an isomorphism through the whole paper and we will write $\mathbb{K}[\epsilon]$ instead
of the ring $\mathcal{O}/\mathfrak{M}^2$. Then we have the following natural exact sequence of $\mathbb{K}[\epsilon]$-modules
$$ 0 \longrightarrow \epsilon \mathbb{K} \longrightarrow  \mathbb{K}[\epsilon] \longrightarrow \mathbb{K} \longrightarrow 0. $$ 
Tensoring with the stalk $\hat{E}_x$ of the vector bundle $E$ at the point $x$ we obtain an exact sequence of $\mathbb{K}[\epsilon]$-modules
$$ 0 \longrightarrow \epsilon E_x \longrightarrow  E_{2x} \longrightarrow E_x \longrightarrow 0,$$
where we write $E_x = \hat{E}_x \otimes \mathbb{K}$, the fiber of the vector bundle $E$ at the point $x$, and 
$E_{2x} = \hat{E}_x \otimes \mathbb{K}[\epsilon]$. We observe that there exists a non-canonical $\mathbb{K}[\epsilon]$-linear
isomorphism $E_{2x} = E_x \oplus \epsilon E_x$, i.e. the above exact sequence is (non-canonically) split.

\bigskip
Let $L$ be a finite-dimensional $\mathbb{K}[\epsilon]$-module. Giving the structure of a $\mathbb{K}[\epsilon]$-module is equivalent to giving a $\mathbb{K}$-linear endomorphism denoted $\cdot \epsilon$, multiplication with $\epsilon$, on $L$ which is nilpotent of order $\leq 2$, i.e. $ \cdot \epsilon \circ \cdot \epsilon= 0$. Considering then the Jordan decomposition of $\cdot \epsilon$ allows us to show that any $\mathbb{K}[\epsilon]$-module $L$ is isomorphic as $\mathbb{K}[\epsilon]$-module to 
$$ L \cong  \mathbb{K}[\epsilon]^{\oplus f} \oplus  \mathbb{K}^{\oplus g},$$
for some uniquely determined integers $f,g \in \mathbb{N}$. Note that $\mathbb{K}$ is a $\mathbb{K}[\epsilon]$-module
on which $\epsilon$ acts trivially.
\bigskip

We will say that $L$ is free if $g=0$ and that $L$ is almost-free if $g=1$. We consider the following natural submodules
of $L$ given as image and kernel of the multiplication by $\epsilon$
$$L^{(1)} = \mathrm{im}(\cdot \epsilon) \qquad \text{and} \qquad L^{(2)} = \mathrm{ker}(\cdot \epsilon).$$
Then we have the natural inclusions
$$ L^{(1)} \subset L^{(2)} \subset L.$$
Note that the quotient module $ L^{(2)}/L^{(1)} \cong \mathbb{K}^{\oplus g}$. We define the \emph{torsion degree} of a $\bb K[\epsilon]-$module $L$ to be $\dim_{\bb K}(L^{(2)}/L^{(1)}).$

Given two $\mathbb{K}[\epsilon]$-module
$L$ and $M$ and a $\mathbb{K}[\epsilon]$-linear homomorphism $\phi : L \rightarrow M$, then it is easy to see
that $\phi$ preserves the above filtrations, i.e. $\phi(L^{(i)}) \subset M^{(i)}$. So we can consider the induced 
map 
$$ \phi_0 :  L^{(2)}/L^{(1)} \rightarrow M^{(2)}/M^{(1)}.$$
We then define
\begin{equation} \label{defhom0}
\mathrm{Hom}_{\mathbb{K}[\epsilon]}^0(L,M) = \{ \phi \in  \mathrm{Hom}_{\mathbb{K}[\epsilon]}(L,M) \ | \ \phi_0 = 0 \}. 
\end{equation}
Note that if $L$ is free, then  
$\mathrm{Hom}_{\mathbb{K}[\epsilon]}^0(L,M) = \mathrm{Hom}_{\mathbb{K}[\epsilon]}(L,M)$.

\subsection{Parameterizing $\mathbb{K}[\epsilon]$-submodules}

We consider a $\mathbb{K}$-vector space $V$ of dimension $r$ and a $\mathbb{K}[\epsilon]$-submodule
$L \subset V \oplus \epsilon V$ with $\dim L = n \leq 2r$. Then $L$ fits into an exact sequence
\begin{equation} \label{esL}
\begin{tikzcd} 
 0 \ar[r] & \epsilon G    \ar[r] \ar[d, hook] &  L  \ar[r]  \ar[d, hook] &  F  \ar[r]  \ar[d, hook] &  0   \\
0 \ar[r] & \epsilon V \ar[r] &  V \oplus \epsilon V  \ar[r,"\pi"]   &  V  \ar[r] &  0,
\end{tikzcd}
\end{equation}
where the vertical maps are inclusions, $\epsilon G = L \cap \epsilon V$ and $F = \pi(L)$. 
One easily sees that the structure of $\mathbb{K}[\epsilon]$-module on $L$ is equivalent to the
inclusion $F \subset G$.

\begin{lemm} \label{equalitytorsiondegree}
        The $\mathbb{K}[\epsilon]$-modules $L$ and $\left(V\oplus \epsilon V \right)/L$ have the same torsion degree.
\end{lemm}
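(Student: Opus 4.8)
The plan is to compute the torsion degree of both modules explicitly in terms of the data $F \subset G \subset V$ appearing in the exact sequence \eqref{esL}, and then observe that the two expressions coincide. First I would identify the filtration pieces of $L$ itself. Multiplication by $\epsilon$ on $L$ sends $F$-coordinates into $\epsilon G$: concretely, writing an element of $L$ as $v + \epsilon w$ with $v \in F$ and (its image determining) $w$, we get $\epsilon \cdot (v + \epsilon w) = \epsilon v$, so $L^{(1)} = \mathrm{im}(\cdot \epsilon) = \epsilon F$ and $L^{(2)} = \mathrm{ker}(\cdot \epsilon) = \epsilon G$ (an element is killed by $\epsilon$ exactly when its $F$-component vanishes, i.e. it lies in $\epsilon G$). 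Hence the torsion degree of $L$ is $\dim_{\mathbb K}(\epsilon G / \epsilon F) = \dim_{\mathbb K}(G/F)$.

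Next I would carry out the same analysis for $Q := (V \oplus \epsilon V)/L$. The key is to produce the analogue of the exact sequence \eqref{esL} for $Q$. Applying the snake lemma to the vertical inclusions in \eqref{esL}, or just chasing directly, one finds that $Q$ fits into
$$ 0 \longrightarrow \epsilon(V/F) \longrightarrow Q \longrightarrow V/G \longrightarrow 0, $$
with $\epsilon Q \cap (\text{the }\epsilon\text{-part}) = \epsilon(V/F)$ and the quotient $V/G$; here I am using that $\epsilon V / \epsilon G = \epsilon(V/G)$ but the relevant submodule of $Q$ coming from $\epsilon V$ is $\epsilon V / (\epsilon V \cap L) = \epsilon V / \epsilon G$, while the image of $Q$ in $V$ (under the induced map) is $V / F$ — I need to be careful here about which of $F,G$ plays which role, and that is exactly the subtle bookkeeping step. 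The correct statement is that for $Q$ the "$F$-part" is $V/G$ and the "$G$-part" is $V/F$, because the condition for an element of $V\oplus\epsilon V$ to be killed by $\epsilon$ modulo $L$ is weaker than before. Granting this, by the same computation as in the first paragraph the torsion degree of $Q$ is $\dim_{\mathbb K}\big((V/F)/(V/G)\big) = \dim_{\mathbb K}(G/F)$.

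Comparing the two results gives $\dim_{\mathbb K}(L^{(2)}/L^{(1)}) = \dim_{\mathbb K}(G/F) = \dim_{\mathbb K}(Q^{(2)}/Q^{(1)})$, which is the claim. The main obstacle is the middle step: correctly identifying the submodule $Q^{(1)} = \mathrm{im}(\cdot\epsilon \text{ on } Q)$ and the submodule $Q^{(2)} = \mathrm{ker}(\cdot\epsilon \text{ on } Q)$, since passing to a quotient can only enlarge kernels and shrink images, and one must check that the roles of $F$ and $G$ get swapped in precisely the way that makes $\dim(G/F)$ reappear. A clean way to avoid the index juggling is to use the classification $L \cong \mathbb K[\epsilon]^{\oplus f}\oplus \mathbb K^{\oplus g}$: choose a $\mathbb K[\epsilon]$-basis adapted to $L\subset V\oplus\epsilon V$, extend it to an adapted generating set of $V\oplus\epsilon V$ (which is free of rank $r$), and read off both $L$ and $Q$ in block form, from which both torsion degrees are visibly equal to the number of $\mathbb K$-summands "split off" by $L$ inside the free module. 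I would likely present the snake-lemma argument as the main line and remark that the basis argument gives an alternative.
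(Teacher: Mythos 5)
Your proposal is in substance the paper's own proof: identify $L^{(1)}=\epsilon F$ and $L^{(2)}=\epsilon G$, do the analogous computation for the quotient $Q=(V\oplus\epsilon V)/L$, and observe that both torsion degrees equal $\dim_{\mathbb K}(G/F)$. The one thing to fix is the displayed exact sequence for $Q$, which has $V/F$ and $V/G$ interchanged: the snake lemma applied to the vertical inclusions of \eqref{esL} (all kernels vanish) gives
$$0\longrightarrow \epsilon\left(V/G\right)\longrightarrow Q\longrightarrow V/F\longrightarrow 0,$$
which is exactly what your own parenthetical computation says ($\epsilon V/(\epsilon V\cap L)=\epsilon V/\epsilon G$ on the left, $V/F$ on the right), and contradicts the sequence as you displayed it. From the correct sequence one reads off $Q^{(1)}=\mathrm{im}(\cdot\epsilon)=\epsilon(V/G)$ and $Q^{(2)}=\ker(\cdot\epsilon)=\pi^{-1}(G/F)$, the full preimage of $G/F\subset V/F$. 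Note that $Q^{(2)}$ is \emph{not} of the form $\epsilon(\text{--})$ when $F\neq G$, so writing it as $\epsilon(V/F)$ is misleading even though its dimension is indeed $\dim(V/F)$; your later verbal statement that the ``$F$-part'' of $Q$ is $V/G$ and the ``$G$-part'' is $V/F$ is the correct bookkeeping. With these identifications $Q^{(2)}/Q^{(1)}\cong G/F\cong L^{(2)}/L^{(1)}$, which is what your final dimension count yields, so the argument goes through once the middle display is repaired.
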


\begin{proof}
    We denote $\left(V\oplus \epsilon V \right)/L$ by $M$. With the above notation we easily see that $L^{(1)} = \epsilon F$ and
    $L^{(2)} = \epsilon G$. Moreover, we have a canonical surjection between two exact sequences of $\mathbb{K}[\epsilon]$-modules
    
    \begin{equation} \label{defj}
    \begin{tikzcd} 
0 \ar[r] & \epsilon V \ar[r] \ar[d,twoheadrightarrow]&  V \oplus \epsilon V  \ar[r,"\pi"] \ar[d,twoheadrightarrow]  &  V  \ar[r] \ar[d,twoheadrightarrow]&  0\\
0 \ar[r] & \epsilon \left( V/G \right) \ar[r,"j"] & M  \ar[r,"\pi"]   &  V/F  \ar[r] &  0,
\end{tikzcd}
   \end{equation}
  from which we deduce that $M^{(1)} = \epsilon \left( V/G \right)$ and $M^{(2)} = \pi^{-1}(G/F)$. In particular we see that 
  $$M^{(2)}/M^{(1)} \cong G/F \cong L^{(2)}/L^{(1)}.$$  
\end{proof}

We introduce the closed subvariety
$$ \Sigma = \{ [L] \in \mathrm{Gr}(n,  V \oplus \epsilon V) \ | \ \epsilon L \subset L \} $$
of the Grassmannian of $n$-dimensional subspaces of  $V \oplus \epsilon V$. Then a point in $\Sigma$ corresponds to 
a $\mathbb{K}[\epsilon]$-submodule L of $V \oplus \epsilon V$ of dimension $n$.

\bigskip

The following proposition describes a natural decomposition of $\Sigma$.

\begin{prop} \label{decsigma}
 We denote $a = \mathrm{max}(0, n-r)$. 
    \begin{enumerate}
        \item We have a decomposition into a disjoint union $$ \Sigma = \bigsqcup_{i=a}^{\lfloor \frac{n}{2} \rfloor} \Sigma_i,$$
        where $\Sigma_i$ is the total space of a vector bundle with fiber $\mathrm{Hom}(F, V/G)$ over the flag
        variety $\mathrm{Flag}(i,n-i, V)$ parameterizing flags $0 \subset F \subset G \subset V$
        with $\dim F = i$ and $\dim G = n-i$.

        \item For any integer $a \leq l \leq \lfloor \frac{n}{2} \rfloor$ the union
        $$ \overline{\Sigma}_l = \bigsqcup_{i=a}^{l} \Sigma_i$$ 
        coincides with the closure of $\Sigma_l$ in $\Sigma$ and we have
        $$ \dim \overline{\Sigma}_l =  n(r-n+l) + l(n-2l).$$
    \end{enumerate}
\end{prop}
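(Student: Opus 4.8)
The plan is to use the exact sequence \eqref{esL} to put $\Sigma$ in bijection with the disjoint union appearing in (1), and then to read off the incidence relations among the pieces directly.

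For (1), to a point $[L]\in\Sigma$ I attach the flag $F=\pi(L)\subset G\subset V$, where $\epsilon G=L\cap\epsilon V$ as in \eqref{esL}, together with the $\mathbb{K}$-linear map $w\in\mathrm{Hom}(F,V/G)$ defined by sending $v\in F$ to the class modulo $G$ of any $u\in V$ with $v+\epsilon u\in L$ (such $u$ exists since $\pi(L)=F$, and is well defined modulo $L\cap\epsilon V=\epsilon G$). The identity $\epsilon L\subset L$ translates, as observed after \eqref{esL}, into $F\subset G$; writing $i=\dim F$ we get $\dim G=n-i$ from $\dim L=\dim F+\dim\epsilon G$, and $0\le\dim F\le\dim G\le\dim V$ forces exactly $a\le i\le\lfloor n/2\rfloor$. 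Conversely, given a flag $0\subset F\subset G\subset V$ with $\dim F=i$, $\dim G=n-i$ and a map $w\in\mathrm{Hom}(F,V/G)$, any $\mathbb{K}$-linear lift $\tilde w\colon F\to V$ of $w$ yields the submodule $L:=\epsilon G+\{v+\epsilon\tilde w(v):v\in F\}$; this is independent of the chosen lift, satisfies $\epsilon L=\epsilon F\subset\epsilon G\subset L$ (so it is a $\mathbb{K}[\epsilon]$-submodule), and has dimension $n$. These assignments are mutually inverse, so the stratum $\Sigma_i:=\{[L]\in\Sigma:\dim\pi(L)=i\}$ is identified with the total space of the bundle with fibre $\mathrm{Hom}(\mathcal{F},V/\mathcal{G})$ over $\mathrm{Flag}(i,n-i,V)$, where $\mathcal{F}\subset\mathcal{G}$ denote the tautological subbundles. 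Working in a frame adapted to the flag, both the map $[L]\mapsto(\pi(L),G)$ and the map $(F,G,w)\mapsto L$ are polynomial, so this identification is an isomorphism of varieties and in particular endows $\Sigma_i$ with the claimed vector-bundle structure.

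For (2), the function $[L]\mapsto\dim\pi(L)$ is lower semicontinuous on $\Sigma$, so $\bigsqcup_{i=a}^{l}\Sigma_i=\{[L]\in\Sigma:\dim\pi(L)\le l\}$ is closed and therefore contains $\overline{\Sigma_l}$. For the reverse inclusion it is enough, by transitivity of closure and induction, to prove $\Sigma_{i-1}\subset\overline{\Sigma_i}$ for $a<i\le\lfloor n/2\rfloor$. Fix $[L]\in\Sigma_{i-1}$ with associated flag $F\subset G$ and lift $\tilde w$ as above, and choose a basis $e_1,\dots,e_r$ of $V$ with $F=\langle e_1,\dots,e_{i-1}\rangle$ and $G=\langle e_1,\dots,e_{n-i+1}\rangle$ (the index $n-i+1$ is at most $r$ because $i\ge a+1\ge n-r+1$). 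For $t\in\mathbb{K}$ set
$$L_t:=\big\langle\,\epsilon e_1,\dots,\epsilon e_{n-i},\ te_i+\epsilon e_{n-i+1},\ e_1+\epsilon\tilde w(e_1),\dots,e_{i-1}+\epsilon\tilde w(e_{i-1})\,\big\rangle.$$
The $n$ generators listed are linearly independent for every value of $t$, so $t\mapsto[L_t]$ is a well-defined morphism $\mathbb{A}^1\to\mathrm{Gr}(n,V\oplus\epsilon V)$; the inequality $i\le\lfloor n/2\rfloor$ guarantees $\epsilon L_t\subset L_t$, so $[L_t]\in\Sigma$; for $t\ne0$ one computes $\pi(L_t)=\langle e_1,\dots,e_i\rangle$ and $L_t\cap\epsilon V=\langle\epsilon e_1,\dots,\epsilon e_{n-i}\rangle$, hence $[L_t]\in\Sigma_i$; and at $t=0$ the generator $te_i+\epsilon e_{n-i+1}$ becomes $\epsilon e_{n-i+1}$, so $L_0=\epsilon G+\{v+\epsilon\tilde w(v):v\in F\}=L$. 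Thus $[L]\in\overline{\Sigma_i}$, which gives $\overline{\Sigma_l}=\bigsqcup_{i=a}^{l}\Sigma_i$. As $\Sigma_l$ is irreducible (a vector bundle over a flag variety) and open dense in this union, $\dim\overline{\Sigma_l}=\dim\Sigma_l$, and the vector-bundle description yields
$$\dim\Sigma_l=\dim\mathrm{Flag}(l,n-l,V)+l\cdot\dim(V/G)=\big[(n-l)(r-n+l)+l(n-2l)\big]+l(r-n+l)=n(r-n+l)+l(n-2l).$$

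The routine parts are the bijection in (1) and the semicontinuity half of (2); the heart of the argument is the explicit family $L_t$ proving $\Sigma_{i-1}\subset\overline{\Sigma_i}$. The main point to get right there is the interplay of the inequalities: that $\epsilon L_t\subset L_t$ holds for all $t$ (this uses $2i\le n$), that the jump $\dim\pi(L_t)=i$ really occurs for $t\ne0$ (this uses $t$ invertible), and that the construction still makes sense at the boundary value $i=a+1$, where $n-i+1\le r$ is tight. Once these are checked, everything else is a dimension count.
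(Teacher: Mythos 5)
Your proof is correct and follows essentially the same route as the paper: stratify by $i=\dim\pi(L)$, identify each stratum with the total space of a $\mathrm{Hom}(F,V/G)$-bundle over $\mathrm{Flag}(i,n-i,V)$, and observe that $\bigsqcup_{i\le l}\Sigma_i$ is closed because $\dim(L\cap\epsilon V)\ge n-l$ is a Schubert/semicontinuity condition. The one place you go beyond the paper is the explicit degeneration $L_t$ establishing $\Sigma_{i-1}\subset\overline{\Sigma_i}$ (the density half of the closure claim), which the paper asserts without detail; your family and the accompanying checks ($2i\le n$ for the module condition, $n-i+1\le r$ at the boundary) are all correct.
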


\begin{proof}
(1) Consider a $\mathbb{K}[\epsilon]$-submodule $L$ and its associated exact sequence \eqref{esL}.
Denote $i = \dim F$. Then the inclusion $F \subset G$ implies the inequality $i \leq n-i$,
which implies $i \leq \lfloor \frac{n}{2} \rfloor$. Also, the inclusion $G \subset V$ implies
$n - i \leq r$, which is equivalent to $n-r \leq i$. Thus, according to the dimension $i = 
\dim F$ associated with $L$, we get a set-theoretical decomposition of $\Sigma$ into subsets
$\Sigma_i$ for $a \leq i \leq  \lfloor \frac{n}{2} \rfloor$. Fix an integer $i$ in this
interval. Then, the vector spaces $F$ and $G$ of the exact sequence \eqref{esL} give a flag
$0 \subset F \subset G \subset V$, hence a point in $\mathrm{Flag}(i,n-i,V)$. Fixing a flag, we see that all extensions of $F$ by $\epsilon G$ contained in $V \oplus \epsilon V$ are 
parameterized by $\mathrm{Hom}(F, V/G)$ as follows :
\\
For $\varphi \in \mathrm{Hom}(F, V/G)$ denote by $L$ the fiber product given by $\varphi$ and the
natural quotient map $V \rightarrow V/G$. So we obtain the following morphism of exact sequences
$$
\begin{tikzcd} 
 0 \ar[r] & \epsilon G    \ar[r] \ar[d, "\mathrm{Id}"] &  L  \ar[r,"\pi"]  
 \ar[d, "\tilde{\varphi}"] &  F  \ar[r]  \ar[d, "\varphi"] &  0   \\
0 \ar[r] & \epsilon G \ar[r] & \epsilon V  \ar[r]   &  \epsilon \left( V/G \right)  \ar[r] &  0.
\end{tikzcd}
$$
Then the composite map $(\pi, \tilde{\varphi}) : L \rightarrow  F \oplus \epsilon V \subset V \oplus \epsilon V$ 
is injective, with associated flag $0 \subset F \subset G \subset V$.
\\
Conversely, given $L$ we consider the projection $\tilde{\varphi} : L \rightarrow \epsilon V$ onto the factor $\epsilon V$ and observe that $\tilde{\varphi}$ induces the identity on the subspace $\epsilon G$. Thus taking the map induced on the quotients we obtain a map $\varphi : F \rightarrow
\epsilon \left( V/G \right)$.
\\
(2) Fixing an integer $l$ satisfying $a \leq l \leq \lfloor \frac{n}{2} \rfloor$ we observe that the union
$$ \bigsqcup_{i=a}^{l} \Sigma_i = \{  [L] \in \Sigma \ | \  \dim \left( L \cap \epsilon V \right) \geq n - l \}$$
is the intersection in the Grassmannian $\mathrm{Gr}(n, V \oplus \epsilon V)$ of $\Sigma$ with a Schubert variety.
Thus this intersection is a closed subvariety of $\Sigma$. The dimension count is straightforward. 
\end{proof}

\begin{rema}
    Note that the smallest stratum $\overline{\Sigma}_a$ is isomorphic either to
    $\mathrm{Gr}(n, \epsilon V)$ if $a=0$ ($\Longleftrightarrow n \leq r$) or
    $\mathrm{Gr}(a, V)$ if $a=n-r >0$.
\end{rema}

\begin{rema}
In particular we obtain
$$ \dim \Sigma = \begin{cases}
   2k(r-k) & \text{for} \ n= 2k \ \text{even}, \\
   2k(r-k-1) + r -1 & \text{for} \ n = 2k+1 \ \text{odd}. 
\end{cases}$$
\end{rema}

\begin{prop} \label{stratatorsiondegree}
    If $L \in \Sigma_i$, then the torsion degree of $L$ is $n-2i$. In particular, if $L$ lies in the largest
    stratum $\Sigma_{\lfloor \frac{n}{2} \rfloor}$, then $L$ is free if $n$ is even, and $L$ is almost free if $n$ is odd.
\end{prop}

\begin{proof}
    This is a straightforward consequence of the description of $\Sigma_i$ given in Proposition \ref{decsigma}.
\end{proof}

We note that the subvarieties $\overline{\Sigma}_l$ are singular for $a < l \leq \lfloor \frac{n}{2} \rfloor$, but one can construct natural 
desingularisations as follows --- this construction was explained to us by V. Benedetti. Over $\mathrm{Gr}(n-l,V)$, we consider  the relative
Grassmannian
$$ \widetilde{\pi}_l : \widetilde{\Sigma}_l \longrightarrow \mathrm{Gr}(n-l,V) $$
such that over $[G] \in \mathrm{Gr}(n-l,V)$ the fiber $\widetilde{\pi}_l^{-1}([G])$ equals $\mathrm{Gr}(l, G \oplus \epsilon \left(V/G\right))$. Then 
$\widetilde{\Sigma}_l$ is a desingularisation of $\overline{\Sigma}_l$, which follows from

\begin{prop}
For any $a \leq l \leq \lfloor \frac{n}{2} \rfloor$, we have
\begin{itemize}
    \item an open immersion $\Sigma_l \hookrightarrow \widetilde{\Sigma}_l$ over $\mathrm{Gr}(n-l,V)$,
    \item a surjective map $\widetilde{\Sigma}_l \twoheadrightarrow \overline{\Sigma}_l$, which restricts to an isomorphism on $\Sigma_l \subset \widetilde{\Sigma}_l$.
\end{itemize}
\end{prop}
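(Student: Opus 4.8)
The plan is to write down both maps explicitly over the base $B := \mathrm{Gr}(n-l,V)$, using tautological bundles. Let $\mathcal{G}\subset V\otimes\mathcal{O}_B$ be the tautological rank-$(n-l)$ subbundle and $\mathcal{Q} := (V\otimes\mathcal{O}_B)/\mathcal{G}$, so that by definition $\widetilde{\pi}_l\colon\widetilde{\Sigma}_l = \mathrm{Gr}(l,\mathcal{G}\oplus\epsilon\mathcal{Q})\to B$ carries a tautological rank-$l$ subbundle $\mathcal{W}\subset\widetilde{\pi}_l^{*}(\mathcal{G}\oplus\epsilon\mathcal{Q})$. To construct $\widetilde{\Sigma}_l\twoheadrightarrow\overline{\Sigma}_l$, I would consider inside the trivial bundle $V\oplus\epsilon V$ on $B$ the subbundle $\widetilde{\mathcal{U}} := \mathcal{G}\oplus\epsilon V$, where $\mathcal{G}$ sits in the first summand. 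Then $\epsilon\widetilde{\mathcal{U}} = \epsilon\mathcal{G}$, so there is a canonical surjection $\widetilde{\mathcal{U}}\twoheadrightarrow\widetilde{\mathcal{U}}/\epsilon\mathcal{G}\cong\mathcal{G}\oplus\epsilon\mathcal{Q}$. Pulling back to $\widetilde{\Sigma}_l$ and taking the preimage of $\mathcal{W}$ yields a rank-$n$ subbundle $\mathcal{L}\subset V\oplus\epsilon V$; it is $\epsilon$-stable since $\epsilon\mathcal{L}\subset\epsilon\widetilde{\mathcal{U}} = \epsilon\mathcal{G}\subset\mathcal{L}$, hence it defines a morphism $\widetilde{\Sigma}_l\to\mathrm{Gr}(n,V\oplus\epsilon V)$ landing in $\Sigma$. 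Since $\epsilon\mathcal{G}\subset\mathcal{L}\cap\epsilon V$ has rank $n-l$ at every point, the image lies in the closed subvariety $\overline{\Sigma}_l = \{[L] : \dim(L\cap\epsilon V)\geq n-l\}$ identified in Proposition \ref{decsigma}(2).

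For surjectivity, take $[L]\in\overline{\Sigma}_l$, say $[L]\in\Sigma_i$ with $a\leq i\leq l$; the exact sequence \eqref{esL} gives $F\subset G\subset V$ with $\dim F = i$ and $\epsilon G = L\cap\epsilon V$ of dimension $n-i$. Because $i\leq l\leq\lfloor \tfrac{n}{2}\rfloor\leq n-l\leq n-i$, one can choose an $(n-l)$-dimensional subspace $G'$ with $F\subset G'\subset G$. Then $L\subset F\oplus\epsilon V\subset G'\oplus\epsilon V = \widetilde{\mathcal{U}}_{[G']}$ and $L\cap\epsilon G' = \epsilon G'$ (as $\epsilon G'\subset\epsilon G = L\cap\epsilon V\subset L$), so the image $W$ of $L$ in $G'\oplus\epsilon(V/G')$ has dimension $n-(n-l) = l$ and its preimage in $\widetilde{\mathcal{U}}_{[G']}$ is $L+\epsilon G' = L$. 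Hence $([G'],W)\in\widetilde{\Sigma}_l$ maps to $[L]$.

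For the open immersion and the last claim, I would observe that on $\widetilde{\Sigma}_l$ the locus where the composite $\mathcal{W}\hookrightarrow\mathcal{G}\oplus\epsilon\mathcal{Q}\to\mathcal{G}$ is fibrewise injective (equivalently $\mathcal{W}\cap\epsilon\mathcal{Q} = 0$ fibrewise) is open; there $\mathcal{W}$ is the graph of a homomorphism from a rank-$l$ subbundle of $\mathcal{G}$ to $\epsilon\mathcal{Q}$, so this open set is precisely the total space of the $\mathrm{Hom}(F,V/G)$-bundle over $\mathrm{Flag}(l,n-l,V)$ appearing in Proposition \ref{decsigma}(1). Tracing through the definition of $\mathcal{L}$ with $G'=G$ shows that the composite of this identification with $\widetilde{\Sigma}_l\to\overline{\Sigma}_l$ is exactly the locally closed immersion $\Sigma_l\hookrightarrow\Sigma$ of that proposition (in particular the image $L$ then has $\dim(L\cap\epsilon V) = n-l$, so lies in $\Sigma_l$). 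This gives the open immersion $\Sigma_l\hookrightarrow\widetilde{\Sigma}_l$ over $B$ and shows $\widetilde{\Sigma}_l\to\overline{\Sigma}_l$ restricts to a bijection onto $\Sigma_l$; to upgrade it to an isomorphism I would write the inverse, namely over $\Sigma_l$ the morphism $[L]\mapsto[G]$ with $\epsilon G := L\cap\epsilon V$ (of constant rank $n-l$ on $\Sigma_l$), together with the rank-$l$ subbundle of $\mathcal{G}\oplus\epsilon\mathcal{Q}$ cut out by the tautological $L$.

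I expect no deep obstacle here; the two points needing care are the elementary dimension count in the surjectivity step — where the hypothesis $l\leq\lfloor \tfrac{n}{2}\rfloor$ is exactly what guarantees the intermediate subspace $F\subset G'\subset G$ exists — and the verification that the preimage/image recipes above commute with base change. The latter holds because on the relevant loci (open in $\widetilde{\Sigma}_l$, respectively all of $\Sigma_l$) the bundle maps involved have constant rank, so the pointwise constructions really define morphisms of schemes and the bijection on $\Sigma_l$ is a scheme isomorphism; smoothness and irreducibility of $\widetilde{\Sigma}_l$ (a Grassmann bundle over a Grassmannian) then make it the asserted desingularisation.
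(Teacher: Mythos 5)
Your proposal is correct and follows essentially the same route as the paper: you construct the same two maps (the graph embedding $\iota(F,G,\phi)=\mathrm{im}(j\oplus\epsilon\phi)$ identifying $\Sigma_l$ with the open locus where $\mathcal{W}\cap\epsilon\mathcal{Q}=0$, and the preimage map $p(\widetilde{F})=q^{-1}(\widetilde{F})$) and verify $p\circ\iota=\mathrm{id}$. The only difference is that you spell out the surjectivity of $p$ (via the choice of an intermediate subspace $F\subset G'\subset G$ of dimension $n-l$, which uses $i\le l\le\lfloor n/2\rfloor$), a step the paper declares to be clear; your dimension count there is correct.
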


\begin{proof}
It suffices to construct two morphisms $\iota : \Sigma_l \hookrightarrow \widetilde{\Sigma}_l$  and 
$p : \widetilde{\Sigma}_l \twoheadrightarrow \overline{\Sigma}_l$ satisfying $p \circ \iota = \mathrm{id}$.
Given a flag $(F,G) \in \mathrm{Flag}(l,n-l,V)$ and a linear map $\phi: F \ra V/G$, we define 
$$\iota(F,G,\phi) = \widetilde{F} = \mathrm{im}\left(j \oplus \epsilon \phi : F \rightarrow G \oplus \epsilon (V/G) \right),$$
where $j: F \hookrightarrow G$ is the inclusion of $F$ into $G$.  On the other hand, given a subspace $\widetilde{F}
\subset G \oplus \epsilon (V/G)$ we consider the projection $q : G \oplus \epsilon V \rightarrow G \oplus \epsilon (V/G)$
and define 
$$ p(\widetilde{F}) = L = q^{-1}(\widetilde{F}) \subset G \oplus \epsilon V \subset  V \oplus \epsilon V.$$
Then $\epsilon L \subset \epsilon G \subset L$, so $L$ is a $\mathbb{K}[\epsilon]$-module. It is clear that
$p \circ \iota = \mathrm{id}$ and that $p$ is surjective.
\end{proof}

\subsection{Hecke curves revisited}
For the convenience of the reader, we recall here the construction of Hecke curves as given in \cite{NR2} (see also \cite{Hw}). Let $E$ be a rank $r$ vector bundle on $X$ and fix a point $x\in X$. For a hyperplane $G \subset E_x$ we let $$0\ra \widetilde{E}\ra E\ra E_x/G\ra 0.$$ The bundle $\widetilde{E}$ is called the \emph{Hecke transformation} of $E$ at the point $x$ with respect to $G$.  Note that over $x$ we have the exact sequence 
$$0\ra E_x/G \ra \widetilde{E}_x\ra G \ra 0.$$ 
Let $F \subset G$ be a line and consider its $2$-dimensional pre-image $\Pi \subset \widetilde{E}_x$, which 
contains the line $E_x/G$. For any line $L\subset \Pi \subset \widetilde{E}_x$ we consider the Hecke transformation of 
$\widetilde{E}$ with respect to $L$ 
$$0\ra E' \ra \widetilde{E} \ra \widetilde{E}_x/L \ra 0.$$ 
When $L=E_x/G$, the bundle $E'\otimes \mathcal{O}(x)$ coincides with $E$. Moreover, if $g\geqslant 4$ and $E$ is general, the bundle $E'$ is stable (\cite[Proposition 2]{Hw}).  Thus, considering all lines $L \subset \Pi$, we 
obtain a rational curve of vector bundles 
$E' \otimes \mathcal{O}(x)$, parameterized by $\mathbb{P}^1 = \bb{P}(\Pi)$ in the moduli space of stable vector bundles, known as a \emph{Hecke curve}. Note that the parameter space of Hecke curves with fixed $x$ is the Flag
variety $\mathrm{Flag}(1,r-1, E_x)$.

\bigskip

In terms of the decomposition of  
$$\Sigma \subset \mathrm{Gr}(r, E_{2x}) = \mathrm{Gr}(r, E_x \oplus \epsilon E_x)$$
given in Proposition \ref{decsigma} we observe that the Hecke
curve associated to the flag $[F \subset G] \in  \mathrm{Flag}(1,r-1, E_x)$ corresponds
to the fiber of the projection
$$ \Sigma_1 \ra \mathrm{Flag}(1,r-1, E_x)$$
over this point. Note that $\Sigma_1$ is the total space of a line bundle over $\mathrm{Flag}(1,r-1, E_x)$.
Then the fiber $\mathrm{Hom}(F, E_x/G)$ over $[F \subset G]$ can be completed to a $\mathbb{P}^1$ by taking is closure in $\widetilde{\Sigma}_1$.
The point at $\infty$ corresponds to $\Sigma_0 = \{ \epsilon E_x \}$, which correponds under the Hecke transformation to the vector bundle $E$ on the Hecke curve. 

\subsection{Tangent space to the stratum $\Sigma_i$}

In the next proposition we describe the tangent space to the stratum $\Sigma_i$ of 
$\Sigma$.

\begin{prop} \label{desctangentspace}
   Let $[L] \in \Sigma_i$. Then the 
   tangent space at $[L]$ to $\Sigma_i$ is the $\mathbb{K}$-vector space 
    $$T_{[L]} \Sigma_i = \mathrm{Hom}^0_{\mathbb{K}[\epsilon]}(L, (V \oplus \epsilon V)/L)$$
    of $\mathbb{K}[\epsilon]$-linear maps between $L$ and $(V \oplus \epsilon V)/L$
    satisfying $\phi_0 = 0$ (see \eqref{defhom0}). In particular, the injective map 
    $\iota: \Sigma_i \longrightarrow \mathrm{Gr}(n,V\oplus \epsilon V)$ is an embedding.
\end{prop}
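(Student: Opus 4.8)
Write $W = V\oplus\epsilon V$, $M = W/L$ and $R = \mathbb{K}[t]/(t^2)$, and recall that $T_{[L]}\mathrm{Gr}(n,W) = \mathrm{Hom}_{\mathbb{K}}(L,M)$, a homomorphism $\phi$ corresponding to the $R$-submodule $L_\phi = \{\, v+tw \mid v\in L,\ w\bmod L = \phi(v)\,\}$ of $W\otimes_{\mathbb{K}}R$, which is free of rank $n$ and reduces to $L$ modulo $t$. The plan has three steps. First I would check that $\iota$ is an embedding (a locally closed immersion), which settles the last assertion: on the image $\iota(\Sigma_i)$ — which is open in the closed subvariety $\overline{\Sigma}_i$ of $\Sigma$ by Proposition \ref{decsigma}(2), hence locally closed in $\mathrm{Gr}(n,W)$ — the subsheaves $\pi(\mathcal{L})$ and $\mathcal{L}\cap(\epsilon V\otimes\mathcal{O})$ of the universal subbundle $\mathcal{L}$ have constant ranks $i$ and $n-i$, hence are subbundles; they recover the flag $F\subseteq G$, and then the map $\varphi$ by the recipe in the proof of Proposition \ref{decsigma}, producing a retraction $r$ of $\iota$ with $r\circ\iota = \mathrm{id}$. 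Thus $\iota$ is an isomorphism onto a locally closed subvariety, $d\iota$ is injective, and it remains to show $d\iota(T_{[L]}\Sigma_i) = \mathrm{Hom}^0_{\mathbb{K}[\epsilon]}(L,M)$ inside $\mathrm{Hom}_{\mathbb{K}}(L,M)$.

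Second, I would prove the inclusion $T_{[L]}\Sigma_i\subseteq\mathrm{Hom}^0_{\mathbb{K}[\epsilon]}(L,M)$. Since $\Sigma_i\subseteq\Sigma = \{[L] : \epsilon L\subseteq L\}$, spelling out $\epsilon L_\phi\subseteq L_\phi$ shows that this holds exactly when $\epsilon\phi(v) = \phi(\epsilon v)$ in $M$ for every $v\in L$, i.e. $\phi\in\mathrm{Hom}_{\mathbb{K}[\epsilon]}(L,M)$; the finer condition cutting out $\Sigma_i$ inside $\Sigma$ then forces $\phi_0 = 0$, as follows. Along $\Sigma_i$ the subsheaf $\mathcal{L}^{(2)} = \ker(\,\cdot\epsilon\colon\mathcal{L}\to\mathcal{L}) = \mathcal{L}\cap(\epsilon V\otimes\mathcal{O})$ has constant rank $n-i$, hence is a subbundle; pulling back along a tangent vector $\phi\in T_{[L]}\Sigma_i$ shows that $N_t := L_\phi\cap(\epsilon V\otimes R)$ is a free rank-$(n-i)$ $R$-submodule of $L_\phi$, i.e. the deformation of $N := L^{(2)} = L\cap\epsilon V$ classified by some $\phi_N\in\mathrm{Hom}_{\mathbb{K}}(N,W/N)$. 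As $N_t$ lies in the fixed subspace $\epsilon V\otimes R$, the image of $\phi_N$ lies in $(\epsilon V)/N$; and as $N\subseteq L$, the restriction $\phi|_N$ is $\phi_N$ composed with $W/N\twoheadrightarrow M$, so $\phi(N)$ is contained in the image of $(\epsilon V)/N$ in $M$, which is $(\epsilon V + L)/L = \epsilon M = M^{(1)}$. Since $N = L^{(2)}$, this says exactly $\phi_0 = 0$.

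Third, I would match dimensions. As $[L]\in\Sigma_i$, Proposition \ref{stratatorsiondegree} gives that $L$ has torsion degree $n-2i$, so $L\cong\mathbb{K}[\epsilon]^{i}\oplus\mathbb{K}^{n-2i}$; by Lemma \ref{equalitytorsiondegree} the module $M$ has the same torsion degree, and $\dim M = 2r-n$ forces $M\cong\mathbb{K}[\epsilon]^{r-n+i}\oplus\mathbb{K}^{n-2i}$. Using $\dim\mathrm{Hom}_{\mathbb{K}[\epsilon]}(\mathbb{K}[\epsilon],\mathbb{K}[\epsilon]) = 2$ and $\dim\mathrm{Hom}_{\mathbb{K}[\epsilon]} = 1$ for the three other pairs of indecomposable $\mathbb{K}[\epsilon]$-modules, one gets $\dim\mathrm{Hom}_{\mathbb{K}[\epsilon]}(L,M) = 2i(r-n+i) + (n-2i)r$; and since $\phi\mapsto\phi_0$ is a surjection onto $\mathrm{Hom}_{\mathbb{K}}(\mathbb{K}^{n-2i},\mathbb{K}^{n-2i})$ with kernel $\mathrm{Hom}^0_{\mathbb{K}[\epsilon]}(L,M)$, we obtain $\dim\mathrm{Hom}^0_{\mathbb{K}[\epsilon]}(L,M) = 2i(r-n+i)+(n-2i)(r-n+2i) = n(r-n+i)+i(n-2i)$, which by Proposition \ref{decsigma}(2) equals $\dim\Sigma_i$. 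As $\Sigma_i$ is smooth — it is the total space of a vector bundle over $\mathrm{Flag}(i,n-i,V)$ — we conclude $\dim T_{[L]}\Sigma_i = \dim\Sigma_i = \dim\mathrm{Hom}^0_{\mathbb{K}[\epsilon]}(L,M)$, which together with the inclusion of the second step yields the equality.

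The hard part is the second step: translating "the first-order deformation stays in the stratum $\Sigma_i$" into the vanishing $\phi_0 = 0$. It relies on the flatness over $R$ of the pulled-back subbundle $\mathcal{L}^{(2)}$ and on two elementary facts about first-order deformations — that for $N\subseteq L\subseteq W$ the deformation of $N$ induced by one of $L$ restricts correctly, and that a deformation of $N$ confined to a fixed subspace $U\supseteq N$ has class in $U/N$ — both of which need to be stated and checked carefully. Everything else is the standard identification of the tangent space to a Grassmannian together with bookkeeping already prepared by Propositions \ref{decsigma}, \ref{stratatorsiondegree} and Lemma \ref{equalitytorsiondegree}.
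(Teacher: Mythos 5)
Your proof is correct, but it follows a genuinely different route from the paper. The paper works directly with the fibration $\Sigma_i \to \mathrm{Flag}(i,n-i,V)$: it splits $T_{[L]}\Sigma_i$ into the relative tangent space $\mathrm{Hom}(F,V/G)$ and $T_{[F\subset G]}\mathrm{Flag}(i,n-i,V)$, introduces the intermediate space $\mathrm{Hom}^{fil}_{\mathbb{K}}(L,(V\oplus\epsilon V)/L)$ of maps preserving the filtration, and proves a lemma identifying $\mathbb{K}[\epsilon]$-linearity of such a map with the compatibility condition \eqref{descriptiontangentflag} cutting out $T\mathrm{Flag}$ inside $T\mathrm{Gr}(i,V)\oplus T\mathrm{Gr}(n-i,V)$; this yields both inclusions at once, with no dimension count. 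You instead prove only the inclusion $d\iota(T_{[L]}\Sigma_i)\subseteq \mathrm{Hom}^0_{\mathbb{K}[\epsilon]}(L,M)$ by a functor-of-points argument ($\epsilon$-stability of $L_\phi$ gives $\mathbb{K}[\epsilon]$-linearity; flatness of $\mathcal{L}^{(2)}$ along the stratum gives $\phi_0=0$, exactly matching the paper's reformulation $\phi_0=0\Leftrightarrow\phi(L^{(2)})\subseteq M^{(1)}$), and then close the gap by computing $\dim\mathrm{Hom}^0_{\mathbb{K}[\epsilon]}(L,M)$ from the classification $L\cong\mathbb{K}[\epsilon]^i\oplus\mathbb{K}^{n-2i}$, $M\cong\mathbb{K}[\epsilon]^{r-n+i}\oplus\mathbb{K}^{n-2i}$ and comparing with $\dim\Sigma_i$ from Proposition \ref{decsigma}(2); I checked your arithmetic and it is right. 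Your approach buys a cleaner, more robust treatment of the embedding claim (the retraction built from the constant-rank subsheaves) and avoids the explicit flag-variety tangent space computation, at the cost of invoking smoothness of $\Sigma_i$ and a dimension count, so it identifies the tangent space less explicitly than the paper's direct isomorphism (which is what gets reused in the orthogonal case, Proposition \ref{largeststratumissmooth}). One small caution: your parenthetical ``this holds exactly when $\phi\in\mathrm{Hom}_{\mathbb{K}[\epsilon]}(L,M)$'' computes the tangent space of the naturally defined closed subscheme $\{\epsilon\mathcal{L}\subseteq\mathcal{L}\}$, which for $n$ odd is strictly larger than $T_{[L]}\Sigma$ even at points of the largest stratum (that scheme structure is non-reduced there); since you only use the inclusion $T_{[L]}\Sigma_i\subseteq\mathrm{Hom}_{\mathbb{K}[\epsilon]}(L,M)$, this does not affect your argument, but the ``exactly'' should be dropped.
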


\begin{proof}
The natural exact sequence on 
tangent spaces at a point $[L]$ of the fibration $\Sigma_i \longrightarrow \mathrm{Flag}(i, n-i,V)$ is given by 
$$ 0 \longrightarrow T^{rel}_{[L]}\Sigma_i \longrightarrow T_{[L]} \Sigma_i \longrightarrow T_{[F \subset G]}
 \mathrm{Flag}(i, n-i,V) \longrightarrow 0.$$
We can identify the tangent space along the fiber $T^{rel}_{[L]}\Sigma_i$ with $\mathrm{Hom}(F,V/G)$. Forgetting the 
$\mathbb{K}[\epsilon]$-module structure gives an
injective map $\iota: \Sigma_i \longrightarrow \mathrm{Gr}(n,V\oplus \epsilon V)$ and its differential at $[L]$ is
$$ d\iota_{[L]} : T_{[L]} \Sigma_i \longrightarrow \mathrm{Hom}_{\mathbb{K}}(L,(V\oplus \epsilon V)/L). $$
Clearly the restriction of $d\iota_{[L]}$ to $T^{rel}_{[L]}\Sigma_i = \mathrm{Hom}(F,V/G)$ is the injective
map 
$$ \alpha : \mathrm{Hom}(F,V/G) \longrightarrow \mathrm{Hom}_{\mathbb{K}}(L,(V\oplus \epsilon V)/L)$$
sending $\psi \in \mathrm{Hom}(F,V/G)$ to 
$$ j \circ \epsilon \psi \circ \pi : L \stackrel{\pi}{\longrightarrow} F \stackrel{\epsilon \psi}{\longrightarrow}
\epsilon (V/G) \stackrel{j}{\longrightarrow} (V\oplus \epsilon V)/L,$$ 
where $j$ is defined in \eqref{defj}. We introduce the subspace
$$ \mathrm{Hom}^{fil}_{\mathbb{K}}(L,(V\oplus \epsilon V)/L)  \subset 
\mathrm{Hom}_{\mathbb{K}}(L,(V\oplus \epsilon V)/L) $$
of $\mathbb{K}$-linear maps $\psi: L \ra (V\oplus \epsilon V)/L$ satisfying 
$\psi(\epsilon G) \subset \epsilon (V/G)$. Then clearly $\alpha$ factorizes through
$\mathrm{Hom}^{fil}_{\mathbb{K}}(L,(V\oplus \epsilon V)/L)$ and we have an exact sequence 
$$ 0 \rightarrow \mathrm{Hom}(F,V/G) \rightarrow \mathrm{Hom}^{fil}_{\mathbb{K}}(L,(V\oplus \epsilon V)/L)
\stackrel{\beta}{\rightarrow} \mathrm{Hom}(F,V/F) \oplus \mathrm{Hom}(G,V/G) \rightarrow 0.$$
Moreover, the flag variety $\mathrm{Flag}(i, n-i,V)$ can be viewed as a closed subvariety
$$\mathrm{Flag}(i, n-i,V) \subset \mathrm{Gr}(i,V) \times \mathrm{Gr}(n-i,V)$$
and it is straightforward to check that its tangent space at $[F \subset G] \in 
\mathrm{Flag}(i, n-i,V)$ identifies with the subspace of
$$T_{[F]}\mathrm{Gr}(i,V) \oplus T_{[G]}\mathrm{Gr}(n-i,V) = \mathrm{Hom}(F,V/F) \oplus \mathrm{Hom}(G,V/G)$$
cut out by the condition
\begin{equation} \label{descriptiontangentflag}
 (t_F,t_G) \in T_{[F \subset G]}\mathrm{Flag}(i, n-i,V) \ \Longleftrightarrow \ 
t_G \circ i_{(F,G)} = \pi_{(F,G)} \circ t_F, 
\end{equation}
where $i_{(F,G)}: F \hookrightarrow G$ and $\pi_{(F,G)} : V/F \twoheadrightarrow V/G$ are the natural inclusion 
and projection associated with the flag $[F\subset G]$. We will need the following 

\begin{lemm}
    For any  $\phi \in \mathrm{Hom}^{fil}_{\mathbb{K}}(L,(V\oplus \epsilon V)/L)$ we have an equivalence
    $$ \phi \ \text{is} \ \mathbb{K}[\epsilon]-\text{linear} \ \ \Longleftrightarrow \ \ 
    t_G \circ i_{(F,G)} = \pi_{(F,G)} \circ t_F,$$
    with $\beta(\phi) = (t_F,t_G) \in \mathrm{Hom}(F,V/F) \oplus \mathrm{Hom}(G,V/G)$.
\end{lemm}

\begin{proof}
We denote $(V \oplus \epsilon V)/L$ by $M$ and recall from the proof of Lemma \ref{equalitytorsiondegree} that
$L^{(1)} = \epsilon F$ and $M^{(2)} = \pi^{-1}(G/F)$. Since $\phi \in \mathrm{Hom}^{fil}_{\mathbb{K}}(L,(V\oplus \epsilon V)/L)$ we have a morphism of exact sequences
 $$
  \begin{tikzcd} 
0 \ar[r] & \epsilon G \ar[r] \ar[d, "t_G"]&  L  \ar[r,"\pi"] \ar[d,"\phi"]  &  F  \ar[r] \ar[d,"t_F"]&  0\\
0 \ar[r] & \epsilon \left( V/G \right) \ar[r,"j"] & M  \ar[r,"\pi"]   &  V/F  \ar[r] &  0.
\end{tikzcd}
$$
If $\phi$ is $\mathbb{K}[\epsilon]$-linear, then $\epsilon \phi(l) = \phi(\epsilon l)$ for all $l \in L$. We notice
that $\epsilon l \in L^{(1)} = \epsilon F$. If we denote by $\pi(l) = f$, we can write $\epsilon l = \epsilon f$.
On the other hand $\epsilon \phi(l)$ only depends on the class of $\phi(l)$ modulo $M^{(2)} = \pi^{-1}(G/F)$,
which implies that we can consider $\epsilon \phi(l) \in M/M^{(2)} = V/G$. Thus, the equality $\epsilon \phi(l) = \phi(\epsilon l)$ is equivalent to $t_G \circ i_{(F,G)}(f) = \pi_{(F,G)} \circ t_F(f)$ with $\pi(l) =f$.
\end{proof}

We can now conclude the proposition by observing that the previous lemma gives an isomorphism
induced by the differential $d\iota_{[L]}$
$$  T_{[F \subset G]}\mathrm{Flag}(i, n-i,V) \stackrel{\sim}{\longrightarrow} 
\mathrm{Hom}^{fil}_{\mathbb{K}[\epsilon]}(L,(V\oplus \epsilon V)/L)/ \mathrm{Hom}(F, V/G).$$
Finally, we also observe that if $\phi$ is $\mathbb{K}[\epsilon]$-linear, then
\begin{eqnarray*}
 \phi_0 = 0 & \Longleftrightarrow & \phi(L^{(2)}) \subset M^{(1)} \\
            & \Longleftrightarrow & \phi (\epsilon G) \subset \epsilon (V/G) \\
            & \Longleftrightarrow &  \phi \in \mathrm{Hom}^{fil}_{\mathbb{K}[\epsilon]}(L,(V\oplus \epsilon V)/L).
\end{eqnarray*}
Thus $\mathrm{Hom}^{0}_{\mathbb{K}[\epsilon]}(L,(V\oplus \epsilon V)/L) = 
\mathrm{Hom}^{fil}_{\mathbb{K}[\epsilon]}(L,(V\oplus \epsilon V)/L)$ and we are done.
\end{proof}

\begin{coro}
    In particular for $i = \lfloor \frac{n}{2} \rfloor$, we have 
    $$T_{[L]}\Sigma = T_{[L]}\Sigma_{\lfloor \frac{n}{2} \rfloor} =  \mathrm{Hom}^0_{\mathbb{K}[\epsilon]}(L, (V \oplus \epsilon V)/L).$$
\end{coro}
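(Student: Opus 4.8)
The plan is to deduce the corollary directly from Proposition~\ref{desctangentspace} together with Proposition~\ref{decsigma}(2). The key point is that when $i = \lfloor \frac{n}{2} \rfloor$ is the largest index, the stratum $\Sigma_{\lfloor \frac{n}{2} \rfloor}$ is \emph{open} in $\Sigma$: indeed, by Proposition~\ref{decsigma}(2) the complement $\bigsqcup_{i=a}^{\lfloor n/2 \rfloor - 1} \Sigma_i = \overline{\Sigma}_{\lfloor n/2 \rfloor - 1}$ is closed in $\Sigma$, so its complement $\Sigma_{\lfloor n/2 \rfloor}$ is open. Therefore for any $[L] \in \Sigma_{\lfloor n/2 \rfloor}$ we have $T_{[L]}\Sigma = T_{[L]}\Sigma_{\lfloor n/2 \rfloor}$ as subspaces of $T_{[L]}\mathrm{Gr}(n, V \oplus \epsilon V)$, since an open subvariety and the ambient variety share the same tangent space at a common point.

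First I would invoke Proposition~\ref{decsigma}(2) with $l = \lfloor \frac{n}{2} \rfloor - 1$ (assuming $a \leq \lfloor \frac{n}{2} \rfloor - 1$; the degenerate case where $\Sigma$ consists of the single stratum is trivial) to conclude that $\Sigma \setminus \Sigma_{\lfloor n/2 \rfloor}$ is Zariski closed in $\Sigma$, hence $\Sigma_{\lfloor n/2 \rfloor}$ is a nonempty open subscheme of $\Sigma$. Then I would note the general fact that an open immersion induces an isomorphism on tangent spaces, giving $T_{[L]}\Sigma = T_{[L]}\Sigma_{\lfloor n/2 \rfloor}$. Finally, I would apply Proposition~\ref{desctangentspace} with $i = \lfloor \frac{n}{2} \rfloor$ to identify $T_{[L]}\Sigma_{\lfloor n/2 \rfloor}$ with $\mathrm{Hom}^0_{\mathbb{K}[\epsilon]}(L, (V \oplus \epsilon V)/L)$, and chain the two equalities.

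There is essentially no obstacle here: the corollary is a formal consequence of results already proved. The only thing to watch is the degenerate boundary case. If $a = \lfloor \frac{n}{2} \rfloor$ — which happens precisely when $n = 2r$, forcing $L = V \oplus \epsilon V$, or in the edge situation $n \in \{0,1\}$ — then $\Sigma = \Sigma_{\lfloor n/2 \rfloor}$ already as a whole, and the statement is immediate without any openness argument. One should perhaps remark that in all these cases Proposition~\ref{desctangentspace} still applies verbatim, so the identification of $T_{[L]}\Sigma$ with $\mathrm{Hom}^0_{\mathbb{K}[\epsilon]}(L, (V \oplus \epsilon V)/L)$ holds unconditionally.
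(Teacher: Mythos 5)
Your argument is correct and is exactly the (implicit) reasoning behind the paper's unproved corollary: the largest stratum $\Sigma_{\lfloor n/2\rfloor}$ is open in $\Sigma$ because its complement $\overline{\Sigma}_{\lfloor n/2\rfloor-1}$ is closed by Proposition~\ref{decsigma}(2), so tangent spaces at its points agree with those of $\Sigma$, and Proposition~\ref{desctangentspace} then identifies them with $\mathrm{Hom}^0_{\mathbb{K}[\epsilon]}(L,(V\oplus\epsilon V)/L)$. The only (harmless) inaccuracy is in your side remark on the degenerate case: $a=\lfloor n/2\rfloor$ also occurs for $n=2r-1$, not only for $n=2r$ and $n\in\{0,1\}$, but in every such case $\Sigma=\Sigma_{\lfloor n/2\rfloor}$ and your argument goes through unchanged.
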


\section{Lagrangian $\mathbb{K}[\epsilon]$-submodules}

\subsection{Preliminaries on Lagrangian submodules}

We now equip $V$ with a non-degenerate symmetric bilinear form 
$$ b_1 :V \times V \longrightarrow  \mathbb{K} $$ 
with associated quadratic form $q_1$. Extending $\mathbb{K}[\epsilon]$-linearly, we obtain a
non-degenerate symmetric bilinear form on $V \oplus \epsilon V = V \otimes_\mathbb{K} \mathbb{K}[\epsilon]$
with associated quadratic form $q_2$.
Then we have the equality 
\begin{equation} \label{quadformE2x}
 q_2(a + \epsilon b) = q_1(a) + \epsilon 2b_1(a,b) \qquad \forall a,b \in V. 
\end{equation}
An important definition of this paper is the following

\begin{defi}
We say that a $\mathbb{K}[\epsilon]$-submodule $L$ of $V \oplus \epsilon V$ is Lagrangian if
\begin{enumerate}
    \item $L$ is isotropic for the quadratic form $q_2$ on $V \oplus \epsilon V$,
    \item $\dim L = \dim V = r.$
\end{enumerate}
\end{defi}

Then we immediately see that for a Lagrangian submodule $L \subset V \oplus \epsilon V$ we have a $\mathbb{K}[\epsilon]$-linear 
isomorphism
$$ (V \oplus \epsilon V) / L \stackrel{\sim} \longrightarrow  L^* = \mathrm{Hom}_{\mathbb{K}[\epsilon]}(L, \mathbb{K}[\epsilon]).$$

Given a subspace $F \subset V$ we denote its associated orthogonal subspace by
$$ F^\perp = \mathrm{ker}( V \stackrel{\sim} \longrightarrow V^* \twoheadrightarrow F^*). $$
Note that we have a canonical isomorphism
$V / F^{\perp} \stackrel{\sim} \longrightarrow  F^*$.
We recall that $F$ is isotropic if and only if $F \subset F^\perp$ and that
$\dim F + \dim F^\perp = r$. 

\begin{lemm} \label{Lagsubmod}
If $L \subset V \oplus \epsilon V$ is a Lagrangian submodule, then we have the following inclusion of exact sequences
$$
\begin{tikzcd}
 0 \ar[r] & \epsilon F^\perp    \ar[r] \ar[d, hook] &  L  \ar[r]  \ar[d, hook] &  F  \ar[r]  \ar[d, hook] &  0   \\
0 \ar[r] & \epsilon V \ar[r] &  V \oplus \epsilon V  \ar[r,"\pi"]   &  V   \ar[r] &  0
\end{tikzcd}
$$
where $F = \pi(L)$ is an isotropic subspace of $V$.
\end{lemm}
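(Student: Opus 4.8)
The plan is to combine the exact sequence \eqref{esL} with the explicit formula \eqref{quadformE2x}. First I would polarize \eqref{quadformE2x} to record the symmetric bilinear form $b_2$ associated with $q_2$ on $V \oplus \epsilon V$, namely
$$b_2(a + \epsilon b,\, a' + \epsilon b') = b_1(a,a') + \epsilon\,\bigl(b_1(a,b') + b_1(a',b)\bigr), \qquad a,b,a',b' \in V.$$
Since $\mathrm{char}\,\mathbb{K} \neq 2$, the condition that $L$ be isotropic for $q_2$ is equivalent to $b_2(l,l') = 0$ in $\mathbb{K}[\epsilon]$ for all $l,l' \in L$. From \eqref{esL} we already have $\epsilon G = L \cap \epsilon V$ and $F = \pi(L)$ fitting into the displayed diagram, so it remains only to show that $F$ is isotropic in $V$ and that $G = F^\perp$.

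Next I would extract the two coefficients of the relation $b_2 = 0$. For $a,a' \in F$, choosing lifts $l = a + \epsilon b$ and $l' = a' + \epsilon b'$ in $L$, the vanishing of the constant term of $b_2(l,l')$ gives $b_1(a,a') = 0$; hence $b_1$ vanishes on $F$, i.e. $F \subset F^\perp$, which in particular re-proves the inclusion $F \subset G$ of \eqref{esL}. To get $G \subset F^\perp$, take $g \in G$ — so that $\epsilon g \in L$ — and any $a \in F$ with a lift $l = a + \epsilon b \in L$; then $0 = b_2(\epsilon g, l) = \epsilon\, b_1(a,g)$, so $b_1(a,g) = 0$ for all $a \in F$, that is $g \in F^\perp$. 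Finally, the top row of \eqref{esL} gives $\dim L = \dim F + \dim(\epsilon G) = \dim F + \dim G$, and since $\dim L = r = \dim F + \dim F^\perp$, we conclude $\dim G = \dim F^\perp$; together with $G \subset F^\perp$ this forces $G = F^\perp$, which is precisely the asserted inclusion of exact sequences.

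I do not expect a genuine obstacle here: the whole argument is a short computation over the ring of dual numbers. The only points needing care are to read $b_2(\cdot,\cdot) = 0$ as an identity in $\mathbb{K}[\epsilon]$ and to use its constant term (for the isotropy of $F$) and its $\epsilon$-coefficient (for $G \subset F^\perp$) separately, and to keep the bookkeeping straight when passing between the subspace $G \subset V$ and the subspace $\epsilon G \subset \epsilon V$ in the dimension count.
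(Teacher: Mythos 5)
Your argument is correct and is essentially the paper's own proof: the authors likewise read off the isotropy of $F$ from the constant term of $q_2$ and obtain $G\subset F^\perp$ from the $\epsilon$-coefficient of $q_2(a+\epsilon(b+m))$ for $m\in G$ (which is exactly your polarization identity $b_2(\epsilon g,l)=\epsilon\,b_1(a,g)$), concluding $G=F^\perp$ by the same dimension count $\dim G=r-\dim F=\dim F^\perp$. The only cosmetic quibble is your parenthetical claim that $F\subset F^\perp$ ``re-proves'' $F\subset G$ --- that inclusion really comes from the $\mathbb{K}[\epsilon]$-module structure and only follows from isotropy once $G=F^\perp$ is established --- but this aside plays no role in the argument.
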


\begin{proof}
Using the expression of the quadratic form 
\eqref{quadformE2x} we immediately obtain that $F = \pi(L)$ is an isotropic subspace of $V$. We define
the subspace $M \subset V$ as $\epsilon M  = L \cap \epsilon V$. Then for any $a + \epsilon b \in L$ and
$m \in M$, with $a,b \in V$, we have $a + \epsilon (b+m) \in L$, hence
$$ 0 = q_2(a + \epsilon(b+m)) = q_1(a) + 2 \epsilon ( b_1(a,b) + b_1(a,m) ).$$
We deduce that $b_1(a,m) = 0$ for any $a \in F$ and $m \in M$, hence $M \subset F^\perp$. We then obtain
the equality $M = F^\perp$, since $\dim M = \dim F^\perp$.
\end{proof}

\subsection{Parameterizing Lagrangian submodules}

We can describe the set of Lagrangian 
$\mathbb{K}[\epsilon]$-submodules $L \subset V \oplus \epsilon V$ with fixed $\pi(L) = F \subset V$ as follows.

\begin{prop}\label{submoduleL}
Given an isotropic subspace $F \subset V$ there is a bijective correspondence between the set of 
Lagrangian $\mathbb{K}[\epsilon]$-submodules 
$L \subset V \oplus \epsilon V$ with $\pi(L) = F$ and the vector space 
$$\mathrm{Hom}^{\mathrm{skew}}(F, V/F^{\perp}) = \mathrm{Hom}^{\mathrm{skew}}(F,F^*) = \Lambda^2 F^*,$$
where $\mathrm{Hom}^{skew}(F,F^*)$ denotes the space of skew-symmetric maps $\varphi : F \rightarrow F^*$.
\end{prop}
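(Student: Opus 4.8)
The plan is to construct the bijection explicitly using the exact sequence from Lemma \ref{Lagsubmod}. Given a Lagrangian submodule $L$ with $\pi(L) = F$, Lemma \ref{Lagsubmod} tells us that $L$ sits in the extension
$$ 0 \longrightarrow \epsilon F^\perp \longrightarrow L \longrightarrow F \longrightarrow 0 $$
inside $0 \to \epsilon V \to V \oplus \epsilon V \to V \to 0$. As in the proof of Proposition \ref{decsigma}(1), such an $L$ (with prescribed $\epsilon F^\perp = L \cap \epsilon V$ and $\pi(L) = F$) is the fiber product determined by a linear map $\varphi \colon F \to V/F^\perp$, namely $L = \{ a + \epsilon b \mid a \in F,\ \pi'(b) = \varphi(a) \}$ where $\pi' \colon V \to V/F^\perp$ is the projection. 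Using the canonical identification $V/F^\perp \cong F^*$, this gives a map $\varphi \colon F \to F^*$. So the first step is to recall that the set of $\mathbb{K}[\epsilon]$-submodules $L$ with $L \cap \epsilon V = \epsilon F^\perp$ and $\pi(L) = F$ is already in bijection with $\mathrm{Hom}(F, V/F^\perp) = \mathrm{Hom}(F, F^*)$; this is just the specialization of Proposition \ref{decsigma}(1) to $G = F^\perp$, and one checks the $\mathbb{K}[\epsilon]$-module condition $F \subset G$ holds precisely because $F$ is isotropic.

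The second and main step is to identify exactly which $\varphi$ yield an \emph{isotropic} $L$, and to show this is the condition that $\varphi$ be skew-symmetric. Here I would take a general element $a + \epsilon b \in L$, so $a \in F$ and $b \in V$ with $\pi'(b) = \varphi(a)$, and compute using \eqref{quadformE2x}:
$$ q_2(a + \epsilon b) = q_1(a) + 2\epsilon\, b_1(a,b) = 2\epsilon\, b_1(a,b), $$
since $q_1(a) = 0$ as $F$ is isotropic. Now $b_1(a,b)$ only depends on $a \in F$ and on the class of $b$ in $V/F^\perp = F^*$, because changing $b$ by an element of $F^\perp$ changes $b_1(a,b)$ by $b_1(a, F^\perp) = 0$. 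So $b_1(a,b) = \langle \varphi(a), a \rangle$, the natural pairing $F^* \times F \to \mathbb{K}$. Hence $L$ is isotropic if and only if $\langle \varphi(a), a \rangle = 0$ for all $a \in F$, which — over a field of characteristic $\neq 2$ — is exactly the condition that the bilinear form $(a, a') \mapsto \langle \varphi(a), a' \rangle$ on $F$ be alternating, i.e. that $\varphi \in \mathrm{Hom}^{\mathrm{skew}}(F, F^*)$.

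The third step is purely formal: record the standard identification $\mathrm{Hom}^{\mathrm{skew}}(F, F^*) \cong \Lambda^2 F^*$ sending $\varphi$ to the alternating form $(a,a') \mapsto \langle \varphi(a), a'\rangle$ viewed as an element of $\Lambda^2 F^*$, and note that the correspondence $L \leftrightarrow \varphi$ is a bijection because the assignment $\varphi \mapsto L$ is manifestly injective (distinct $\varphi$ give distinct fiber products) and surjective onto Lagrangians with $\pi(L) = F$ (every such $L$ has $L \cap \epsilon V = \epsilon F^\perp$ by Lemma \ref{Lagsubmod}, so arises this way).

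I do not anticipate any serious obstacle; the only subtlety is the well-definedness check that $b_1(a,b)$ descends to the pairing $\langle \varphi(a), a \rangle$ on $V/F^\perp = F^*$, which is where the orthogonality $\dim F + \dim F^\perp = r$ and the defining property of $F^\perp$ get used, and the characteristic $\neq 2$ hypothesis entering when passing from "$\langle \varphi(a), a\rangle = 0$ for all $a$" to "$\varphi$ skew-symmetric." Everything else is a direct unwinding of the exact sequence in Lemma \ref{Lagsubmod} together with the parameterization already established in Proposition \ref{decsigma}.
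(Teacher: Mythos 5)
Your proposal is correct and follows essentially the same route as the paper: both identify $L$ with the fiber product (pullback) of $V \to V/F^\perp$ along $\varphi\colon F \to V/F^\perp = F^*$, use Lemma \ref{Lagsubmod} to pin down $L \cap \epsilon V = \epsilon F^\perp$, and reduce isotropy of $L$ via \eqref{quadformE2x} to the vanishing of $\langle \varphi(a),a\rangle$ for all $a\in F$, which by polarization in characteristic $\neq 2$ is skew-symmetry of $\varphi$. The only cosmetic difference is that you first parameterize all submodules with $G=F^\perp$ and then cut out the isotropic ones, whereas the paper constructs $L_\varphi$ from a skew $\varphi$ and separately proves the converse; the content is identical.
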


\begin{proof}
Let $\varphi\in \mathrm{Hom}^{\mathrm{skew}}(F, V/F^{\perp})$, pulling back the exact sequence $$0\ra F^{\perp}\ra V\ra V/F^{\perp}\ra 0,$$ by $\varphi:F\ra V/F^{\perp}$, we get the diagram $$\xymatrix{0\ar[r] & F^{\perp} \ar@{=}[d]\ar[r]^j& L_\varphi \ar[r]^\pi\ar[d]^i & F\ar[r]\ar[d]^\varphi & 0  \\ 0\ar[r] & F^{\perp} \ar[r]^{j'} & V\ar[r]^{\pi'}  & V/F^{\perp} \ar[r] & 0 },$$
Define $\iota:L_\varphi \ra V \oplus \epsilon V$ by $\iota=(\pi,\epsilon i)$. Then $\iota$ is injective,
since if $\iota(v)=0,$ then $\pi(v)=0$, so $v=j(w)$, but then $0=i(v)=i(j(w))=j'(w)$, so $w=0$ and hence $v=0$.\\
We claim that  $\iota(L_\varphi)$ is an isotropic submodule. Indeed, 
by equation $(\ref{quadformE2x})$ we have for any $\omega \in L_\varphi$
$$q_2(\iota(\omega)) = q_1(\pi(\omega))+ \epsilon 2b_1(\pi(\omega),i(\omega)),$$
but since $F$ is isotropic, $q_1(\pi(\omega))=0.$ Now 
\begin{align*}
    b_1(\pi(\omega),i(\omega))&=b_1(\pi(\omega),\pi'(i(\omega))) \\&= b_1(\pi(\omega),\varphi(\pi(\omega)))\\&= b_1(\varphi^t(\pi(\omega)),\pi(\omega))\\&= b_1(-\varphi(\pi(\omega)),\pi(\omega))\\&= -b_1(\pi(\omega),\varphi(\pi(\omega))).
\end{align*}
Hence $b_1(\pi(\omega),i(\omega))=0$, and so $\iota(L_\varphi)$ is isotropic. Since $\dim L_\varphi = r$, it is Lagrangian. Now clearly $\pi(L_\varphi)= F.$\\
Conversely, by Lemma \ref{Lagsubmod} and by the proof of Proposition \ref{decsigma}(1) any Lagrangian $\mathbb{K}[\epsilon]$-submodule $L$ with $\pi(L) = F$ induces a linear map $\varphi: F \ra V/F^\perp = F^*$. Hence any Lagrangian submodule $L$ is of the form $\iota(L_\varphi)$. It remains to show that $\varphi$ is skew-symmetric. We have by 
equation \eqref{quadformE2x} that for any $f \in F$, $b_1(f, \varphi(f))= 0$. Writing this equality for $f+f'$ for any 
$f,f' \in F$ and after developing we obtain that $b_1(f, (\varphi + \varphi^t)(f')) = 0$. We deduce from
these equalities that $\varphi + \varphi^t = 0$, hence $\varphi$ skew-symmetric. 
\end{proof}

\begin{rema}
 If $\dim F = 1$ there is a unique Lagrangian $\mathbb{K}[\epsilon]$-submodule $L$ with associated $F$. The 
 corresponding orthogonal Hecke transformation (see section 4) associated to this Lagrangian submodule was already studied  in
 \cite{Abe}.
\end{rema}

Now we can state the analogue of Proposition \ref{decsigma} for the subvariety 
\begin{equation} \label{definitionOSigma}
 \mathrm{O}\Sigma = \{ [L] \in  \mathrm{OGr}(r, V \oplus \epsilon V) \ | \   \epsilon L \subset L \}
\end{equation}
of the orthogonal Grassmannian $\mathrm{OGr}(r, V \oplus \epsilon V)$ of Lagrangian submodules. First, we note that
$\epsilon V$ is Lagrangian, so $[\epsilon V] \in \mathrm{O}\Sigma$.  Secondly, we recall the fact that 
$\mathrm{OGr}(r, V \oplus \epsilon V)$ has two (isomorphic) connected components, which we denote by
$\mathrm{OGr}^{(m)}(r, V \oplus \epsilon V)$ for $m=0,1$ with the characterizing condition 
$$[\epsilon V] \in \mathrm{OGr}^{(0)}(r, V \oplus \epsilon V).$$
Accordingly, the subvariety $\mathrm{O} \Sigma$ has two connected components
$$\mathrm{O} \Sigma = \mathrm{O}\Sigma^{(0)} \cup \mathrm{O}\Sigma^{(1)}.$$

\begin{prop} \label{decOsigma}
 \begin{enumerate}
    \item  For $m=0,1$, we have a decomposition into a disjoint union 
    $$ \mathrm{O}\Sigma^{(m)} = \bigsqcup_{i \equiv m   [2] \atop{i \leq  \lfloor \frac{r}{2} \rfloor} }  \mathrm{O}\Sigma_i,$$
    where $\mathrm{O}\Sigma_i$ is the total space of a vector bundle with fiber $\Lambda^2 F^*$ over the
    orthogonal Grassmannian $\mathrm{OGr}(i, V)$ parameterizing isotropic subspaces $F$ with $\dim F = i$.
    \item For any integer $0 \leq l \leq \lfloor \frac{r}{2} \rfloor$ the union
    $$ \overline{\mathrm{O}\Sigma}_l = \bigsqcup_{i \equiv l  [2]  \atop{i \leq l} } \mathrm{O}\Sigma_i$$
    coincides with the closure of $\mathrm{O}\Sigma_l$ in  $\mathrm{O}\Sigma^{(m)}$, where $m$ is determined
    by the condition $m \equiv l [2]$.
 \end{enumerate}
\end{prop}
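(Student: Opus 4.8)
The plan is to mimic the proof of Proposition \ref{decsigma} closely, using the structural results already established for Lagrangian submodules. First I would establish part (1): given a Lagrangian $\mathbb{K}[\epsilon]$-submodule $L \subset V \oplus \epsilon V$, Lemma \ref{Lagsubmod} produces the associated exact sequence with $F = \pi(L)$ isotropic and $L \cap \epsilon V = \epsilon F^\perp$, so $G = F^\perp$ in the notation of \eqref{esL}. Setting $i = \dim F$, the isotropy of $F$ forces $i \leq \lfloor \frac r2 \rfloor$, and then Proposition \ref{submoduleL} shows that the fiber of the map $[L] \mapsto [F] \in \mathrm{OGr}(i,V)$ is the vector space $\mathrm{Hom}^{\mathrm{skew}}(F,F^*) = \Lambda^2 F^*$. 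This gives the set-theoretic decomposition $\mathrm{O}\Sigma = \bigsqcup_i \mathrm{O}\Sigma_i$ with each $\mathrm{O}\Sigma_i$ the total space of a vector bundle with fiber $\Lambda^2 F^*$ over $\mathrm{OGr}(i,V)$. The bundle structure follows exactly as in Proposition \ref{decsigma}(1): the relative construction in the proof of Proposition \ref{submoduleL} (pulling back $0 \to F^\perp \to V \to V/F^\perp \to 0$ along $\varphi$) globalizes over $\mathrm{OGr}(i,V)$, since $\mathrm{OGr}(i,V)$ carries a tautological isotropic subbundle.

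The point requiring genuine argument is the statement about connected components, namely that $\mathrm{O}\Sigma_i$ lies in $\mathrm{O}\Sigma^{(m)}$ precisely when $i \equiv m \pmod 2$. The idea is to compute the parity of the connected component of the Lagrangian $[L]$ against that of the base point $[\epsilon V] \in \mathrm{OGr}^{(0)}$, which corresponds to $i = 0$. Two Lagrangians $L, L'$ lie in the same component of $\mathrm{OGr}(r, V \oplus \epsilon V)$ if and only if $\dim(L \cap L')$ has the same parity as $r$ (equivalently $\dim(L \cap L') \equiv \dim(L \cap \epsilon V) \pmod 2$ after a standard reduction). Taking $L' = \epsilon V$, we have $L \cap \epsilon V = \epsilon F^\perp$ by Lemma \ref{Lagsubmod}, and $\dim \epsilon F^\perp = \dim F^\perp = r - i$. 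Hence $[L]$ and $[\epsilon V]$ lie in the same component iff $r - i \equiv r \pmod 2$, i.e. iff $i$ is even; this identifies $\mathrm{O}\Sigma^{(0)} = \bigsqcup_{i \text{ even}} \mathrm{O}\Sigma_i$ and $\mathrm{O}\Sigma^{(1)} = \bigsqcup_{i \text{ odd}} \mathrm{O}\Sigma_i$. I would phrase this carefully using the standard parity criterion for when two Lagrangian subspaces of an orthogonal space lie in the same $\mathrm{OGr}$-component, since $\mathrm{OGr}(r, V \oplus \epsilon V)$ over a $2r$-dimensional orthogonal space has two components indexed by this parity.

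For part (2), the argument is identical in spirit to Proposition \ref{decsigma}(2). Fix $l$ with $0 \leq l \leq \lfloor \frac r2 \rfloor$ and let $m \equiv l \pmod 2$. Inside $\mathrm{O}\Sigma^{(m)}$, the union $\bigsqcup_{i \equiv l[2],\, i \leq l} \mathrm{O}\Sigma_i$ is cut out by the condition $\dim(L \cap \epsilon V) \geq r - l$, equivalently $\dim F = \dim \pi(L) \leq l$; this is the intersection of $\mathrm{O}\Sigma^{(m)}$ with a Schubert-type locus in $\mathrm{OGr}(r, V \oplus \epsilon V)$, hence closed. That it is exactly the closure of $\mathrm{O}\Sigma_l$ — rather than something smaller — follows by degenerating the skew form: starting from a point of $\mathrm{O}\Sigma_l$ over $[F] \in \mathrm{OGr}(l,V)$ with skew map $\varphi \in \Lambda^2 F^*$, one can move $[F]$ within the closure of its Schubert cell and simultaneously degenerate $\varphi$ so that its radical grows, landing in any $\mathrm{O}\Sigma_i$ with $i < l$ and $i \equiv l \pmod 2$ (the parity is preserved because the rank of a skew form is even, so the radical of $\varphi$ has dimension $\equiv \dim F \pmod 2$). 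A cleaner route is to note that the desingularization construction preceding Proposition \ref{decOsigma}'s analogue — or rather the obvious orthogonal variant, taking $\mathrm{OGr}(l, G \oplus \epsilon(V/G))$ over $[G] \in \mathrm{OGr}(r-l, V)$ — surjects onto the candidate closed set and restricts to an isomorphism on $\mathrm{O}\Sigma_l$, forcing the candidate to be irreducible of the right dimension and hence to equal $\overline{\mathrm{O}\Sigma}_l$.

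The main obstacle is the connected-component computation: one must pin down precisely the parity invariant distinguishing the two components of $\mathrm{OGr}(r, V \oplus \epsilon V)$ and verify that $\dim(L \cap \epsilon V) = r - i$ translates into $i \equiv m \pmod 2$ with the correct normalization $[\epsilon V] \in \mathrm{OGr}^{(0)}$. Everything else is a routine transcription of Section 2's arguments with ``flag'' replaced by ``isotropic subspace'' and ``$\mathrm{Hom}(F,V/G)$'' replaced by ``$\Lambda^2 F^*$''.
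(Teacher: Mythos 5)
Your proposal is correct and follows essentially the same route as the paper, whose proof consists of the single remark that one transcribes the argument of Proposition \ref{decsigma} using Lemma \ref{Lagsubmod} and Proposition \ref{submoduleL}. In fact you supply more detail than the paper does, in particular the parity computation $\dim(L\cap \epsilon V)=r-i$ combined with the standard criterion that two maximal isotropics of a $2r$-dimensional quadratic space lie in the same component iff their intersection has dimension $\equiv r \ [2]$, which correctly pins down the distribution of the strata between $\mathrm{O}\Sigma^{(0)}$ and $\mathrm{O}\Sigma^{(1)}$.
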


\begin{proof}
The proof is similar to the proof of Proposition \ref{decsigma} using the description of the Lagrangian 
submodules given in Proposition \ref{submoduleL}.
\end{proof}

\begin{rema}
If $r > 2$ is even,  $\mathrm{O}\Sigma_{\frac{r}{2}}$ has two connected components. Its closure  
$\overline{\mathrm{O}\Sigma}_{\frac{r}{2}}$ is connected and has two irreducible components.
\end{rema} 

\begin{rema}
We have the following dimensions :
\begin{itemize}
    \item If $r = 2k$ even, then 
$$ \dim \mathrm{O} \Sigma^{(0)} =  \dim \mathrm{O} \Sigma^{(1)} = k(k-1). $$ 
    \item If $r = 2k+1$ odd,  then
    \begin{itemize}
        \item if $k$ even, $\dim \mathrm{O} \Sigma^{(0)} = k^2$ and $\dim \mathrm{O} \Sigma^{(1)} = k^2-1$,
         \item if $k$ odd, $\dim \mathrm{O} \Sigma^{(0)} = k^2-1$ and $\dim \mathrm{O} \Sigma^{(1)} = k^2$.
         \end{itemize}
\end{itemize}
\end{rema}

\bigskip

We note that the subvarieties $\overline{\mathrm{O}\Sigma}_l$ are singular for 
$2 \leq l \leq  \lfloor \frac{r}{2} \rfloor$, 
but one can construct natural 
desingularisations as follows: consider over $\mathrm{OGr}(l,V)$ the relative
Grassmannian
$$ \tilde{\pi}_l : \widetilde{\mathrm{O}\Sigma}_l \longrightarrow \mathrm{OGr}(l,V) $$
such that over $[F] \in \mathrm{OGr}(l,V)$ the fiber $\tilde{\pi}_l^{-1}([F])$ equals 
$\mathrm{OGr}^0(l, F \oplus \epsilon F^*)$ --- here we equip $F \oplus \epsilon F^*$
with the standard hyperbolic form and $\mathrm{OGr}^0(l, F \oplus \epsilon F^*)$ denotes the connected component of the
orthogonal Grassmannian containing the subspace $[F] \in \mathrm{OGr}(l, F \oplus \epsilon F^*)$.
Then $\widetilde{\mathrm{O}\Sigma}_l$ is a desingularisation of $\overline{\mathrm{O}\Sigma}_l$, which follows from

\begin{prop}
For any $0 \leq l \leq \lfloor \frac{r}{2} \rfloor$ we have
\begin{itemize}
    \item an open immersion $\mathrm{O}\Sigma_l \hookrightarrow \widetilde{\mathrm{O}\Sigma}_l$ 
    over $\mathrm{OGr}(l,V)$,
    \item a surjective map $\widetilde{\mathrm{O}\Sigma}_l \twoheadrightarrow \overline{\mathrm{O}\Sigma}_l$, which restricts to an isomorphism on $\mathrm{O}\Sigma_l \subset \widetilde{\mathrm{O}\Sigma}_l$.
\end{itemize}
\end{prop}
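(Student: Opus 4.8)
The strategy is to reproduce, with the additional care required for the connected components of orthogonal Grassmannians, the construction used for $\widetilde{\Sigma}_l$: namely to build two morphisms over $\mathrm{OGr}(l,V)$,
$$\iota : \mathrm{O}\Sigma_l \longrightarrow \widetilde{\mathrm{O}\Sigma}_l \qquad \text{and} \qquad p : \widetilde{\mathrm{O}\Sigma}_l \longrightarrow \overline{\mathrm{O}\Sigma}_l,$$
satisfying $p \circ \iota = \mathrm{id}$, and then to read off the two assertions. Both maps are defined fibrewise over $[F] \in \mathrm{OGr}(l,V)$ by means of the canonical surjection $q_F : F \oplus \epsilon V \twoheadrightarrow F \oplus \epsilon (V/F^\perp) = F \oplus \epsilon F^*$, whose kernel is $\epsilon F^\perp$. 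For a Lagrangian $\widetilde{F} \subset F \oplus \epsilon F^*$ we set $p(F, \widetilde{F}) := q_F^{-1}(\widetilde{F}) \subset F \oplus \epsilon V \subset V \oplus \epsilon V$. Then $\dim q_F^{-1}(\widetilde{F}) = l + (r-l) = r$; it is a $\mathbb{K}[\epsilon]$-submodule since $\epsilon F \subset \epsilon F^\perp = \ker q_F \subset q_F^{-1}(\widetilde{F})$; and it is isotropic because the quadratic form $q_2$ restricted to $F \oplus \epsilon V$ factors through $q_F$ --- this uses that $F$ is isotropic and that $b_1(f, \cdot)$ vanishes on $F^\perp$ for $f \in F$, and it simultaneously identifies the form induced on $F \oplus \epsilon F^*$ with the standard hyperbolic one. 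Conversely, given $(F, \varphi) \in \mathrm{O}\Sigma_l$, let $L_\varphi$ be the Lagrangian submodule attached to $\varphi$ in Proposition \ref{submoduleL}; since $\pi(L_\varphi) = F$ we have $L_\varphi \subset F \oplus \epsilon V$, and we put $\iota(F, \varphi) := q_F(L_\varphi)$. Because $\ker q_F = \epsilon F^\perp \subset L_\varphi$, we get $q_F^{-1}(q_F(L_\varphi)) = L_\varphi$, hence $p \circ \iota = \mathrm{id}$. That $\iota$ and $p$ are morphisms over $\mathrm{OGr}(l,V)$ follows from the relative, tautological-bundle form of these constructions.

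The ingredient genuinely absent in the non-orthogonal setting is the bookkeeping of connected components. To see that $\iota$ really lands in $\widetilde{\mathrm{O}\Sigma}_l$ --- and not merely in the relative orthogonal Grassmannian with both components --- we argue fibrewise: over a fixed $[F]$ the fibre of $\mathrm{O}\Sigma_l$ is the affine space $\Lambda^2 F^*$, which is connected, so $\iota$ sends it into the component of $\mathrm{OGr}(l, F \oplus \epsilon F^*)$ containing $\iota(F,0) = F \oplus 0 = [F]$, which is by definition $\mathrm{OGr}^0(l, F \oplus \epsilon F^*)$. From $p \circ \iota = \mathrm{id}$ the morphism $\iota$ is injective, and a direct computation identifies its image with the open locus $U = \{ (F, \widetilde{F}) \ | \ \widetilde{F} \cap \epsilon F^* = 0 \}$ (equivalently, $\widetilde{F}$ maps isomorphically onto $F$), on which $p$ is the inverse morphism. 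Hence $\iota$ is an open immersion over $\mathrm{OGr}(l,V)$ and $p$ restricts to an isomorphism $U \xrightarrow{\sim} \mathrm{O}\Sigma_l$.

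It remains to prove surjectivity of $p$ onto $\overline{\mathrm{O}\Sigma}_l$, for which we invoke properness. The source $\widetilde{\mathrm{O}\Sigma}_l$ is projective, being proper over the projective variety $\mathrm{OGr}(l,V)$, so $p$ is a closed map; moreover $\dim \mathrm{O}\Sigma_l = \dim \mathrm{OGr}(l,V) + \binom{l}{2} = \dim \widetilde{\mathrm{O}\Sigma}_l$, so $U$ is dense in $\widetilde{\mathrm{O}\Sigma}_l$ --- on each irreducible component, there being two of these when $r$ is even and $l = r/2$ and one otherwise, in bijection with the components of $\mathrm{O}\Sigma_l$. Consequently $p(\widetilde{\mathrm{O}\Sigma}_l)$ is a closed subvariety, equal to the closure of $p(U) = \mathrm{O}\Sigma_l$ inside $\mathrm{O}\Sigma$, which is precisely $\overline{\mathrm{O}\Sigma}_l$ by Proposition \ref{decOsigma}(2). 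The main obstacle throughout is exactly this component-and-closure analysis; the remaining verifications are a routine transcription, using Proposition \ref{submoduleL} and Lemma \ref{Lagsubmod}, of the argument already given for $\widetilde{\Sigma}_l$. (Alternatively, one proves surjectivity by hand: given $L \in \mathrm{O}\Sigma_i$ with $i \leq l$ and $i \equiv l \pmod 2$, extend $\pi(L)$ to an isotropic subspace $F$ of dimension $l$; then $q_F(L) \cap (F \oplus 0) = \ker \varphi$, where $\varphi \in \Lambda^2 (\pi(L))^*$ is the skew-symmetric form of $L$, and $\dim \ker \varphi \equiv i \equiv l \pmod 2$ forces $q_F(L) \in \mathrm{OGr}^0$ by the parity criterion distinguishing the two families of Lagrangians in a hyperbolic space, so that $p(F, q_F(L)) = L$.)
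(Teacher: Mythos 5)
Your proof is correct and follows the same strategy as the paper's: define $\iota$ fibrewise as the graph of $\epsilon\phi$ and $p$ as $q_F^{-1}$, and check $p\circ\iota=\mathrm{id}$. The paper leaves the connected-component bookkeeping and the surjectivity of $p$ as "clear"; your verification that $\iota$ lands in $\mathrm{OGr}^0$ (connectedness of $\Lambda^2F^*$) and your two surjectivity arguments (properness plus density of the image of $\iota$, or the direct parity criterion $\dim(q_F(L)\cap F)=\dim\ker\varphi\equiv i\equiv l\ [2]$) supply exactly the details the paper omits.
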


\begin{proof}
It suffices to construct two morphisms $\iota : \mathrm{O} \Sigma_l \hookrightarrow \widetilde{\mathrm{O}\Sigma}_l$  and 
$p : \widetilde{\mathrm{O}\Sigma}_l \twoheadrightarrow \overline{\mathrm{O}\Sigma}_l$ satisfying $p \circ \iota = \mathrm{id}$.
Given an isotropic subspace $F \in \mathrm{OGr}(l,V)$ and a skew-symmetric map $\phi: F \ra F^*$, we define 
$$\iota(F,\phi) = \widetilde{F} = \mathrm{im}\left(\mathrm{id} \oplus \epsilon \phi : F \rightarrow F \oplus \epsilon F^* \right).$$
On the other hand, given a subspace $\widetilde{F} \subset F \oplus \epsilon F^*$ we consider the projection 
$q : F \oplus \epsilon V \rightarrow F \oplus \epsilon \left( V/F^{\perp} \right) =  F \oplus \epsilon F^*$
and define 
$$ p(\widetilde{F}) = L = q^{-1}(\widetilde{F}) \subset F \oplus \epsilon V \subset  V \oplus \epsilon V.$$
Then $L$ is Lagrangian and since $\epsilon L \subset \epsilon F \subset \epsilon F^\perp \subset L$,  $L$ is a $\mathbb{K}[\epsilon]$-module. It is clear that $p \circ \iota = \mathrm{id}$ and that $p$ is surjective.
\end{proof}

\subsection{Tangent space to the largest stratum of $\mathrm{O}\Sigma^{(m)}$}

The next proposition can be proved by adapting without difficulty the proof of Proposition 
\ref{desctangentspace} to the orthogonal case.

\begin{prop} \label{largeststratumissmooth}
   Let $L$ be a Lagrangian $\mathbb{K}[\epsilon]$-submodule of $V \oplus \epsilon V$ 
   such that $[L]$ lies in the largest stratum of
   $\mathrm{O}\Sigma^{(m)}$. Then $[L] \in \mathrm{O}\Sigma^{(m)}$ is a smooth point and the tangent space
   at $[L]$ to $\mathrm{O}\Sigma^{(m)}$ is the $\mathbb{K}$-vector space 
   $$T_{[L]} \mathrm{O}\Sigma^{(m)} = \mathrm{Hom}_{\mathbb{K}[\epsilon]}^{0,skew}(L, L^*)$$
    of skew-symmetric $\mathbb{K}[\epsilon]$-linear maps between the two $\mathbb{K}[\epsilon]$-modules $L$ and $L^*$, which satisfy  $\phi_0 = 0$.
\end{prop}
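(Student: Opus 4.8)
The plan is to mimic the proof of Proposition \ref{desctangentspace}, replacing the full Grassmannian by the orthogonal Grassmannian and the flag variety $\mathrm{Flag}(i,n-i,V)$ by the isotropic Grassmannian $\mathrm{OGr}(i,V)$, and keeping careful track of the skew-symmetry conditions. Write $n = r$, so $[L]$ lies in the largest stratum $\mathrm{O}\Sigma_i$ with $i = \lfloor r/2 \rfloor$; by Proposition \ref{decOsigma}(1) this stratum is the total space of the vector bundle with fiber $\Lambda^2 F^*$ over $\mathrm{OGr}(i,V)$. First I would record the short exact sequence on tangent spaces of the fibration $\mathrm{O}\Sigma_i \to \mathrm{OGr}(i,V)$, namely
$$ 0 \longrightarrow T^{rel}_{[L]}\mathrm{O}\Sigma_i \longrightarrow T_{[L]}\mathrm{O}\Sigma_i \longrightarrow T_{[F]}\mathrm{OGr}(i,V) \longrightarrow 0, $$
where the relative tangent space is the fiber $\Lambda^2 F^* = \mathrm{Hom}^{\mathrm{skew}}(F, V/F^\perp)$ of Proposition \ref{submoduleL}, and $T_{[F]}\mathrm{OGr}(i,V) = \mathrm{Hom}^{\mathrm{skew}}(F, V/F)$, the skew-symmetric homomorphisms $F \to V/F \cong F^* \oplus (F^\perp/F)^*$. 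Since $q_2$ restricted to $L$ vanishes, the derivative of the inclusion $\iota : \mathrm{O}\Sigma_i \hookrightarrow \mathrm{OGr}(r, V\oplus\epsilon V)$ lands in the skew-symmetric homomorphisms, so $d\iota_{[L]}$ maps into $\mathrm{Hom}^{\mathrm{skew}}_{\mathbb{K}}(L, L^*)$, using the identification $(V\oplus\epsilon V)/L \cong L^*$ coming from the Lagrangian condition.

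Next I would run the same factorization argument as in Proposition \ref{desctangentspace}: introduce the subspace $\mathrm{Hom}^{fil,\mathrm{skew}}_{\mathbb{K}}(L, L^*)$ of skew-symmetric $\mathbb{K}$-linear maps $\psi$ with $\psi(\epsilon F^\perp) \subset \epsilon(V/F^\perp)$ (using $L^{(2)} = \epsilon F^\perp$ and the filtration on $M = L^*$ from Lemma \ref{equalitytorsiondegree}), note that the image of $d\iota_{[L]}$ factors through it, and produce the exact sequence
$$ 0 \rightarrow \mathrm{Hom}^{\mathrm{skew}}(F, V/F^\perp) \rightarrow \mathrm{Hom}^{fil,\mathrm{skew}}_{\mathbb{K}}(L, L^*) \xrightarrow{\ \beta\ } \mathrm{Hom}^{\mathrm{skew}}(F, V/F) \oplus (\text{a term that dies}) \rightarrow 0; $$
more precisely, the skew-symmetry pairs up the two summands $\mathrm{Hom}(F,V/F) \oplus \mathrm{Hom}(G, V/G)$ appearing in the vector-bundle case — here with $G = F^\perp$ — so that $\beta$ only carries the single datum $t_F \in \mathrm{Hom}^{\mathrm{skew}}(F, V/F)$, the tangent vector to $\mathrm{OGr}(i,V)$ at $[F]$. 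Then the key lemma, proved exactly as the inner Lemma above, is that for $\phi \in \mathrm{Hom}^{fil,\mathrm{skew}}_{\mathbb{K}}(L, L^*)$ one has $\phi$ is $\mathbb{K}[\epsilon]$-linear if and only if the compatibility $t_{F^\perp}\circ i_{(F,F^\perp)} = \pi_{(F,F^\perp)}\circ t_F$ holds, which under skew-symmetry is automatically the tangency condition cutting out $T_{[F]}\mathrm{OGr}(i,V)$ inside $\mathrm{Hom}^{\mathrm{skew}}(F,V/F)$; hence $d\iota_{[L]}$ induces an isomorphism onto $\mathrm{Hom}^{\mathrm{skew}}_{\mathbb{K}[\epsilon]}(L,L^*)$ modulo the relative part, and the condition $\phi_0 = 0$ is equivalent to $\phi \in \mathrm{Hom}^{fil}$, giving $T_{[L]}\mathrm{O}\Sigma_i = \mathrm{Hom}^{0,\mathrm{skew}}_{\mathbb{K}[\epsilon]}(L,L^*)$.

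Finally I would address smoothness. Since $[L]$ lies in the largest stratum, by Proposition \ref{stratatorsiondegree} $L$ is free (if $r$ even) or almost-free (if $r$ odd), and $\mathrm{O}\Sigma_i$ is a Zariski-open subset of $\mathrm{O}\Sigma^{(m)}$ by Proposition \ref{decOsigma}(2) — it is the complement of the lower strata, which form the closed subvariety $\overline{\mathrm{O}\Sigma}_{i-2}$. Being the total space of a vector bundle over the smooth variety $\mathrm{OGr}(i,V)$, the stratum $\mathrm{O}\Sigma_i$ is smooth, hence $[L]$ is a smooth point of $\mathrm{O}\Sigma^{(m)}$ and its Zariski tangent space there coincides with $T_{[L]}\mathrm{O}\Sigma_i$ computed above. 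The main obstacle I anticipate is bookkeeping the skew-symmetry conditions correctly in the exact sequence for $\mathrm{Hom}^{fil,\mathrm{skew}}_{\mathbb{K}}(L,L^*)$: in the non-orthogonal case the target of $\beta$ has two independent summands $\mathrm{Hom}(F,V/F)\oplus\mathrm{Hom}(G,V/G)$, whereas skew-symmetry forces $G = F^\perp$ and identifies one summand with the (dual of the) other, so one must check that $\beta$ remains surjective onto $\mathrm{Hom}^{\mathrm{skew}}(F,V/F)$ and that the "$\mathbb{K}[\epsilon]$-linear compatibility'' condition collapses to exactly the isotropic-Grassmannian tangency relation and nothing more — this requires unwinding the pairing between $\epsilon F^\perp$ and $V/F^\perp$ induced by $b_1$ with some care, but is otherwise a routine adaptation.
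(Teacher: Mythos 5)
Your proposal is correct and follows exactly the route the paper intends: its ``proof'' of this proposition is precisely the remark that one adapts the argument of Proposition~\ref{desctangentspace} to the orthogonal case, and you carry out that adaptation (replacing $\mathrm{Flag}(i,n-i,V)$ by $\mathrm{OGr}(i,V)$, noting that skew-symmetry collapses the pair $(t_F,t_G)$ with $G=F^\perp$ to the single datum $t_F$, and using openness of the top stratum in its component for smoothness) in the expected way. One cosmetic slip: the largest stratum of $\mathrm{O}\Sigma^{(m)}$ is $\mathrm{O}\Sigma_k$ or $\mathrm{O}\Sigma_{k-1}$ according to the parity of $m$ and $k=\lfloor r/2\rfloor$ (Remark~\ref{remlargeststratum}), so $L$ need not be free or almost-free in the second case; but since your smoothness argument only uses that the top stratum is open in its connected component, nothing is affected.
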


\begin{rema} \label{remlargeststratum}
  If we denote $k = \lfloor \frac{r}{2} \rfloor$, then the largest stratum of $\mathrm{O}\Sigma^{(m)}$ is either
  $\mathrm{O}\Sigma_k$ or $\mathrm{O}\Sigma_{k-1}$ depending on the parity of $m$ and $k$ --- see Proposition 
  \ref{decOsigma}.
\end{rema}

\section{Orthogonal Hecke transformations and orthogonal Hecke curves}

We first recall that given an orthogonal vector bundle $E$ and a point $x \in X$ we have a free $\mathbb{K}[\epsilon]$-module
$E_{2x}$ and we consider as in \eqref{definitionOSigma} the subvariety
$$ \mathrm{O}\Sigma(E_{2x}) = \{ [L] \in  \mathrm{OGr}(r, E_{2x}) \ | \   \epsilon L \subset L \}. $$
Note that the choice of a $\mathbb{K}[\epsilon]$-linear isomorphism $E_{2x} \cong E_x \oplus \epsilon E_x$ induces an isomorphism
$\mathrm{O}\Sigma(E_{2x}) \cong \mathrm{O}\Sigma$.

\subsection{Proof of Theorem \ref{maintheorem}}

For $L \in  \mathrm{O}\Sigma(E_{2x})$ we have the defining exact sequence for the vector bundle $E'$
$$0\longrightarrow E' \longrightarrow E \longrightarrow E_{2x}/L \longrightarrow 0.$$
On the other hand $E(-2x)$ appears as a subsheaf of $E'$ and we have the exact sequence
$$ 0\longrightarrow E(-2x) \longrightarrow E' \longrightarrow L \longrightarrow 0.$$
Tensoring with $\mathcal{O}(2x)$ and dualizing leads to 
$$ 0\longrightarrow \left( E'(2x) \right)^* \longrightarrow E^* \longrightarrow L^* \longrightarrow 0,$$
where $L^*$ denotes the $\mathbb{K}[\epsilon]$-module 
$\mathrm{Hom}_{\mathbb{K}[\epsilon]}(L, \mathbb{K}[\epsilon])$. Since $L$ is Lagrangian, we have a
$\mathbb{K}[\epsilon]$-linear isomorphism $E_{2x}/L \stackrel{\sim}{\rightarrow} L^*$.
Thus the isomorphism $E \stackrel{\sim}{\rightarrow} E^*$ induces an isomorphism
$E' \stackrel{\sim}{\rightarrow}  \left( E'(2x) \right)^*$, i.e. $E'(x)$ is an orthogonal bundle, as can be seen
from the following exact sequences
$$\xymatrix{0\ar[r] & E' \ar[d]^\sim \ar[r] & E \ar[r]\ar[d]^\sim & E_{2x}/L \ar[r]\ar[d]^\sim & 0  \\ 0\ar[r] & 
 \left( E'(2x) \right)^* \ar[r] & E^* \ar[r]  & L^* \ar[r] & 0. }$$
\hfill\qedsymbol{}
\bigskip
\begin{rema}
We note that the bundle \( E' \) can be constructed via two consecutive Hecke transformations as follows: 

First, let  $L\in {\rm O\Sigma}_i(E_{2x})$ be a Lagrangian subspace and let $F \subset E_x$
defined by $L^{(1)} = \epsilon F$. Then we have $\epsilon F^\perp=L^{(2)}$.  
Consider the first Hecke transformation of \( E \) with respect to \( F \), given by the short exact sequence:  
\[
0 \to \tilde{E} \to E \to E_x / F \to 0.
\]  
The fiber of \( \tilde{E} \) at \( x \) fits into the following diagram $$\begin{tikzcd}
    0\ar[r] & \epsilon (E_x/F) \ar[d,equal] \ar[r] & \widetilde{E}_x \ar[r]\ar[d,hook] & F\ar[d,hook]\ar[r] & 0  \\ 0\ar[r] & 
 \epsilon (E_{x}/F) \ar[r] & E_{2x}/\epsilon F\ar[r]  & E_x \ar[r] & 0,
\end{tikzcd}$$
from which we deduce the following exact sequences $$\begin{tikzcd}
    0\ar[r]& \epsilon (F^\perp/F) \ar[r]\ar[d,hook]& L/\epsilon F\ar[r] \ar[d,hook]& F\ar[r]\ar[d,equal]&0\\ 
        0\ar[r] & \epsilon (E_x/F) \ar[r] & \widetilde{E}_x \ar[r] & F\ar[r] & 0.  
\end{tikzcd}$$ Note that $\dim L/\epsilon F=r-i$. Perform now a second Hecke transformation on \( \widetilde{E} \) with respect to \( G =L/\epsilon F \), resulting in the exact sequence:  
\[
0 \to E' \to \widetilde{E} \to \widetilde{E}_x / G \to 0.
\]
Thus, the bundle \( E' \) is obtained through this process of two consecutive Hecke transformations.
\end{rema}

\begin{rema} \label{reciprocity}
For an orthogonal bundle $E$ and a Lagrangian  submodule $L\in {\rm O\Sigma}_i(E_{2x})$ we denote the associated orthogonal Hecke transformation by $$E'(x)=\al H(E,L).$$
We recall that the quadratic form induces a $\mathbb{K}[\epsilon]$-linear isomorphism $E_{2x}/L 
\stackrel{\sim}{\longrightarrow} L^*$. Thus we have an exact sequence of $\mathbb{K}[\epsilon]$-modules
$$ 0 \longrightarrow L \longrightarrow E_{2x} \longrightarrow L^* \longrightarrow 0.$$
Using the defining exact sequence  of $\al H(E,L)$ we obtain the exact sequence
$$ 0 \longrightarrow L^* \longrightarrow \al H(E,L)_{2x} \longrightarrow L \longrightarrow 0.$$
Moreover, we have the following reciprocity relation 
\begin{equation} \label{reciprocityrelation}
\al H(\al H(E,L),L^*)=E.
\end{equation}
\end{rema}

\subsection{Orthogonal Hecke transformations and second Stiefel-Whitney class}

We recall that the second Stiefel-Whitney class $w_2(E)$ of an orthogonal bundle $E$ takes values in $H^2(X, \mathbb{Z}/2 \mathbb{Z}) =
\mathbb{Z}/2 \mathbb{Z}$.

\begin{prop} \label{StiefelwhitneyHEL}
    For any orthogonal vector bundle \( E \) and any Lagrangian submodule \( L \in O\Sigma_i(E_{2x}) \), the second Stiefel-Whitney classes of $E$ and 
    its orthogonal Hecke transform $\al H(E,L)$ satisfy the following relation  
    \[
    w_2(\al H(E, L)) \equiv w_2(E) + i \ [2].
    \]
\end{prop}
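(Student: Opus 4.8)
The plan is to reduce the computation of $w_2$ to the parity of the degree of a maximal isotropic subbundle, and then to track how the Hecke transformation changes that degree. Recall that for an orthogonal bundle $E$ of rank $r$ one has $w_2(E) \equiv \deg(N) \ [2]$ for any isotropic subbundle $N \subset E$ of maximal rank $\lfloor r/2\rfloor$ (equivalently, for $r=2k$, the class $w_2$ detects the two components of the variety of maximal isotropic subbundles; for $r$ odd one passes to $E \oplus \mathcal{O}$ or uses a rank-$k$ isotropic subbundle). So the first step is: fix a generic maximal isotropic subbundle $N \subset E$ whose fiber $N_x$ is in general position with respect to the flag $F \subset F^\perp \subset E_x$ attached to $L$ (here $L^{(1)} = \epsilon F$, $L^{(2)} = \epsilon F^\perp$, $\dim F = i$, as in Remark \ref{reciprocity} and Proposition \ref{decOsigma}).

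Next I would use the two-step description of the Hecke transform given in the Remark following the proof of Theorem \ref{maintheorem}: $E' \otimes \mathcal{O}(x) = \mathcal{H}(E,L)$ is obtained from $E$ by a Hecke transformation at $x$ with respect to the isotropic subspace $F \subset E_x$, followed by a second Hecke transformation with respect to $G = L/\epsilon F \subset \widetilde{E}_x$, then twisting by $\mathcal{O}(x)$. The key point is to follow a maximal isotropic subbundle through each elementary transformation. After the first transformation $\widetilde{E} \subset E$ (dropping the quotient $E_x/F$), the sheaf $\widetilde{N} := N \cap \widetilde{E}$ is again isotropic of maximal rank, and its degree drops by $\dim(N_x \cap F) + (\text{something controlled by genericity})$ — more precisely, because of the orthogonality, elementary transformations of orthogonal bundles with respect to isotropic data at a single point $x$ decrease the degree of a transverse maximal isotropic subbundle by exactly $\dim F = i$ in total (the two transformations being "dual" to each other via $F \leftrightarrow F^\perp$, so that the net change of degree after the twist by $\mathcal{O}(x)$ is $i \bmod 2$, not $2i \bmod 2$). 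Tracking this carefully is the crux.

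The main obstacle, therefore, is the bookkeeping of degrees under the two elementary transformations and the final twist: one must verify that a generic maximal isotropic subbundle $N$ can be chosen so that the standard "general position" hypotheses hold at all stages (so the resulting subsheaves are saturated isotropic subbundles of the correct rank), and that the degree change is $+i$ rather than, say, $+2i$ or $0$. For $r$ even this is a direct computation: the first Hecke transform with respect to $F$ changes $\deg$ of a transverse maximal isotropic subbundle by $-i$, the twist by $\mathcal{O}(x)$ compensates by $+\lfloor r/2\rfloor \cdot 1$ on the whole bundle but only by the rank of the isotropic subbundle on $N$, and the second transform with respect to $G$ (whose relevant isotropic part has the complementary dimension $k-i$ inside the hyperbolic piece) contributes the rest; summing gives a net change of $i$ modulo $2$. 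For $r$ odd one reduces to the even case by adding a trivial orthogonal line, noting that $\mathcal{H}(E,L) \oplus \mathcal{O} = \mathcal{H}(E \oplus \mathcal{O}, L)$ with the Lagrangian $L$ of the same stratum index $i$, so the formula is inherited.

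Alternatively, and perhaps more cleanly, one can argue cohomologically: $w_2$ is additive in exact sequences of orthogonal bundles in an appropriate sense, and the class $w_2(\mathcal{H}(E,L)) - w_2(E)$ depends only on the local isomorphism type of the Hecke datum, hence only on $i$ by Proposition \ref{decOsigma} (the stratum $\mathrm{O}\Sigma_i(E_{2x})$ is connected for $i < k$, and $w_2$ is a discrete invariant, so it is constant on each stratum, and its dependence on $i$ must be linear because of the reciprocity relation $\mathcal{H}(\mathcal{H}(E,L),L^*) = E$ from \eqref{reciprocityrelation} together with the compatibility of strata). Then one pins down the constant by evaluating on the single explicit case $i = 1$ (which is the transformation studied in \cite{Abe}, or can be computed directly from the ordinary Hecke transformation, where adding a torsion quotient of length $1$ flips $w_2$), giving the coefficient $1$ and hence $w_2(\mathcal{H}(E,L)) \equiv w_2(E) + i \ [2]$. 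The delicate point in this second route is justifying that $w_2$ is constant on $\mathrm{O}\Sigma_i$ as $E$ varies in a family and $L$ varies in the stratum — i.e. a specialization/semicontinuity argument for the discrete invariant $w_2$ — which is where I would expect to spend the most care.
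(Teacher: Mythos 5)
Your proposal does not reach a complete proof; both routes you sketch have concrete gaps, and neither is the paper's argument (the paper deforms $E$ to the split bundle $\mathcal{O}\otimes V\oplus\delta$, writes down $\mathcal{H}(E,L)$ explicitly as a sum of line-bundle twists, and evaluates Serre's mod~$2$ formula $w_2(E)\equiv \dim H^0(E\otimes\kappa)+(r+1)\dim H^0(\kappa)+\dim H^0(\delta\otimes\kappa)$ for a theta-characteristic $\kappa$, using Riemann--Roch; the case $w_2(E)\equiv 1$ is handled by reciprocity). In your first route, the identity $w_2(E)\equiv\deg N\ [2]$ for a maximal isotropic subbundle $N$ is a genuine theorem, not a triviality --- it needs a proof or a reference, and is essentially of the same depth as the proposition itself; you also need to produce an $N$ whose fiber $N_x$ is transverse to the flag $F\subset F^\perp$, which is an existence statement you do not address. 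More importantly, the degree bookkeeping you assert is wrong where it is specific: for $N_x$ transverse to $F$, the elementary transformation $\widetilde{E}=\ker(E\to E_x/F)$ satisfies $\deg(N\cap\widetilde{E})=\deg N-k$ (the full rank of $N$ maps into $E_x/F$), not $\deg N-i$. The computation that actually works is the one-step one: for generic transverse $N_x$ one has $\dim_{\mathbb{K}}(N_{2x}\cap L)=k-i$, hence $\deg(N\cap E')=\deg N-(2k-(k-i))=\deg N-k-i$ and $\deg\bigl((N\cap E')(x)\bigr)=\deg N-i$. You flag exactly this bookkeeping as ``the crux'' and leave it open, so the proposal stops short of a proof.

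Your second route has a logical error. Reciprocity does not force linearity in $i$: since $L$ and $L^*\cong E_{2x}/L$ have the same torsion degree (Lemma \ref{equalitytorsiondegree}, Proposition \ref{stratatorsiondegree}), $L^*$ lies again in the stratum of index $i$, so the relation $\mathcal{H}(\mathcal{H}(E,L),L^*)=E$ only yields $2c(i)\equiv 0\ [2]$ for the putative shift $c(i)$, which is vacuous. Connectedness of $\mathrm{O}\Sigma_i(E_{2x})$ (plus a deformation argument as $E$ varies) does show that the shift depends only on $i$, but you would then still have to compute $c(i)$ for every $i$ separately --- pinning down only $i=1$ determines nothing about $i\geq 2$. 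That computation for general $i$ is precisely what the paper's explicit split-bundle calculation (or the corrected degree count above) supplies.
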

\begin{proof}
    We will use \cite[Theorem 2]{Serre} which says that 
$$w_2(E) \equiv \dim H^0(E \otimes \kappa) + (r+1) \dim H^0(\kappa)  + \dim H^0(\delta \otimes \kappa) \ \ [2]$$
for any theta-characteristic $\kappa$. Here we denote $\delta = \mathrm{det} \ E$. \\
Suppose that $w_2(E) \equiv 0 \ [2]$. Since the second Stiefel-Whitney class is constant under deformation, we can assume that $E$ is of the form  
$$E= \mathcal{O}\otimes V \oplus \delta,$$ 
where $V$ is a vector space of dimension $r-1$ equipped with a non-degenerate quadratic form. Note that $\delta$ is a $2$-torsion 
line bundle, so $\delta$ is an orthogonal line bundle. Choose an isotropic subspace $F \subset V$ of dimension $i$ and a decomposition of the
vector space $V= F \oplus \left( F^{\perp}/F \right) \oplus F'$. Then the orthogonal Hecke transform of $E$ at the point $x$
with respect to a Lagrangian $L \subset V \otimes \mathcal{O}_{2x}$ with $\pi(L) = F$ is 
$$ \al H(E,L) =F \otimes \mathcal{O}(x) \oplus  \left( F^{\perp}/F \right) \otimes \mathcal{O} \oplus F' \otimes \mathcal{O}(-x) \oplus \delta.$$
Now we have \begin{align*}
    w_2(\al H(E,L)) & \equiv \dim H^0(\al H(E,L) \otimes \kappa) + (r+1) \dim H^0(\kappa)  + \dim H^0(\delta \otimes \kappa) \ \  [2] \\
     & \equiv  (r-2i -1)\dim H^0(\kappa)+i\dim H^0(\kappa(x)) + i\dim H^0(\kappa(-x)) \\ 
      & \ \ \ \  +  \dim H^0(\delta \otimes \kappa) + (r+1) \dim H^0(\kappa)  +  \dim H^0(\delta \otimes \kappa)  \ \ [2] \\
      & \equiv  i\left(\dim H^0(\kappa(x))-\dim H^0(\kappa(-x))\right) \ [2] \\
     & \equiv i\left(\dim H^0(\kappa(x))-\dim H^1(\kappa(x))\right) \  [2] \hspace*{1cm}\x{ (by Serre duality)}
    \\& \equiv i \ [2] \hspace*{6cm}\x{ (by Riemann-Roch)}
    \\& \equiv  w_2(E)+i \ [2]. 
\end{align*}
In order to treat the case when $w_2(E) \equiv 1 \ [2]$, we can use $\al H(\al H(E,L),L^*)=E$ ---
see Remark \ref{reciprocity}.
\end{proof}

\subsection{Orthogonal Hecke curves}

Analoguous to the construction of the Hecke curves for vector bundles (see subsection 2.3), we can construct maps from 
$\mathbb{P}^1$ to the moduli of orthogonal bundles. Fix an orthogonal bundle $E$ and a point $x \in X$. With the previous notation 
$$\widetilde{\pi}_2 : \widetilde{\mathrm{O} \Sigma}_2 \longrightarrow \mathrm{OGr}(2, E_x)$$
is a $\mathbb{P}^1$-bundle over the orthogonal Grassmannian $\mathrm{OGr}(2, E_x)$ completing the total space of the line bundle 
$\mathrm{O} \Sigma_2$. For $[F] \in \mathrm{OGr}(2, E_x)$, the fiber $\widetilde{\pi}_2^{-1}([F])$ 
gives, after orthogonal Hecke transformation, a family of orthogonal bundles parameterised by 
$\mathbb{P}^1$. Note that the point at $\infty$ corresponds to 
$\mathrm{O} \Sigma_0 =\{ \epsilon E_x \}$, which corresponds to $E$ under Hecke transformation. Our construction of an
orthogonal Hecke curve associated to $[F] \in \mathrm{OGr}(2, E_x)$ coincides with that of \cite[Section 4.3]{CCL}.

\section{Low rank cases}

In this section we describe the orthogonal Hecke transformations $\al H(E,L)$ for orthogonal vector bundles $E$ having low rank. We fix
a point $x \in X$. We recall that ${\rm O}\Sigma_0 = \{ \epsilon E_x \}$ and that for $L = \epsilon E_x$ we have 
$\al H(E,L) = E$.

\subsection{Rank two bundles}
We observe that ${\rm O}\Sigma_1$ consists of two Lagrangian submodules.
A rank two orthogonal bundle has one of the following two forms:
\begin{enumerate}
    \item $E= M\oplus M^{-1}  \text{ for some }  M \in \mathrm{Pic}(C).$ Note that  $\det E=\mathcal O_C$.
In this case, we clearly see  that  
$$\al H(E,L)= M(x) \oplus M^{-1}(-x) \qquad \text{or} \qquad M(-x) \oplus M^{-1}(x).$$
\item $E=\pi^\alpha_*M \text{ where } \pi^\alpha: C_{\alpha}\xrightarrow{ 2:1 } C $ is the étale double cover attached to the two-torsion
line bundle $\alpha\in\mathrm{Pic}(C)[2]\,$  (i.e.   $\alpha^2=\mathcal O_C$),  and $M\in \rm{Nm}^{-1}(\mathcal O_C)$.
In this case we have that 
$$\al H(E,L)=\pi_*^\alpha(M(x_1-x_2))  \qquad \text{or} \qquad \pi_*^\alpha(M(x_2-x_1)),$$  
where $(\pi^\alpha)^{-1}(x)=\{x_1,x_2\} $.
\end{enumerate}
\bigskip
\subsection{Rank three bundles}
A rank three orthogonal bundle with trivial determinant is of the form $$E={\rm Sym}^2(F)\otimes \det(F)^{-1}={\rm End}_0(F),$$ where $F$ is a rank two bundle. We can assume that $\deg F$ equals $0$ or $1$.\\
Let $L\in {\rm O}\Sigma_1$, then  $L^{(1)}=\bb K\phi$ for some nonzero $\phi\in {\rm End}_0(F)_x={\rm End}_0(F_x)$ such that $\det(\phi)=0$. We then obtain $$L^{(2)}= (\bb K\phi)^\perp=\{\psi\in{\rm End}_0(F_x)\,|\; {\rm Tr}(\psi\circ\phi)=0 \}.$$
Note that since ${\rm Tr}(\phi)=\det(\phi)=0$, we have $\phi^2=0$. In particular, we have ${\rm Im}(\phi)={\rm Ker}(\phi).$\\ 
Let $F'$ be the Hecke transformation of $F$ with respect to  the line $D={\rm Im}(\phi)$. Then we have 
\begin{prop}
    The Hecke transformation of $E$ with respect to $L$ is given by $$\begin{cases}
        \al H(E,L)={\rm End}_0(F'(x)) & \x{ if } \deg F=0 \\  \al H(E,L)={\rm End}_0(F') & \x{ if } \deg F=1
    \end{cases}$$
\end{prop}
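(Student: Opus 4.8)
The plan is to reduce the rank-three statement to the rank-two Hecke transformation by carefully tracking how $\mathcal H(E,L)$ sits inside $E = \mathrm{End}_0(F)$, using the nilpotent endomorphism $\phi$ that determines the stratum. Set $D = \mathrm{Im}(\phi) = \mathrm{Ker}(\phi) \subset F_x$; this is the line of $F_x$ with respect to which the rank-two Hecke transform $F'$ is defined, via $0 \to F' \to F \to F_x/D \to 0$. I would first identify, at the level of fibers, the three-step filtration $L^{(1)} \subset L^{(2)} \subset E_x = \mathrm{End}_0(F_x)$ explicitly: $L^{(1)} = \mathbb{K}\phi$, and $L^{(2)} = (\mathbb{K}\phi)^\perp$ consists of those trace-zero endomorphisms $\psi$ with $\mathrm{Tr}(\psi\circ\phi)=0$. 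A short linear-algebra computation in $\mathrm{End}_0(F_x)$ (choosing a basis of $F_x$ adapted to $D$) shows that $L^{(2)}$ is precisely the set of $\psi \in \mathrm{End}_0(F_x)$ with $\psi(D) \subset D$, i.e.\ the endomorphisms preserving the flag $0 \subset D \subset F_x$; this is the key bridge between the orthogonal picture on $E$ and the flag picture on $F$.

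Next I would compute $\mathrm{End}_0(F')$ directly. Since $F' = \ker(F \twoheadrightarrow F_x/D)$, one has $F(-x) \subset F' \subset F$, and correspondingly $\mathrm{End}_0(F)$ and $\mathrm{End}_0(F')$ both sit between $\mathrm{End}_0(F)(-x)$-type sheaves; the natural comparison is that $\mathrm{End}_0(F') = \mathrm{End}_0(F)$ as subsheaves of $\mathrm{End}_0(F)\otimes\mathcal O(x)$ away from $x$, and at $x$ they differ exactly by the flag condition above together with a twist coming from $D \subset D$ versus $F_x/D$. Concretely, decomposing $\mathrm{End}_0(F_x)$ under the action induced by $D$ into the pieces $\mathrm{Hom}(F_x/D, D)$, the "diagonal" part preserving $D$, and $\mathrm{Hom}(D, F_x/D)$, the passage $F \rightsquigarrow F'$ twists these three pieces by $\mathcal O(x)$, $\mathcal O$, and $\mathcal O(-x)$ respectively — this is the same pattern as in the proof of Proposition \ref{StiefelwhitneyHEL}. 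One then checks that $\mathcal H(E,L)$, built from the Lagrangian $L$ with $L^{(1)} = \epsilon\mathbb{K}\phi$, $L^{(2)} = \epsilon L^{(2)}$, is obtained from $E = \mathrm{End}_0(F)$ by precisely the same twisting pattern, hence $\mathcal H(E,L) = \mathrm{End}_0(F')$ up to an overall line-bundle twist.

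To pin down that overall twist I would compare determinants (or equivalently degrees): $\mathcal H(E,L)$ and $E$ both have trivial determinant since the construction of Theorem \ref{maintheorem} preserves the orthogonal structure, so $\mathcal H(E,L)$ must be $\mathrm{End}_0$ of a rank-two bundle. Now $\mathrm{End}_0(F'(x)) = \mathrm{End}_0(F')$ and $\mathrm{End}_0(F') $ differ only by the center, so the only ambiguity is whether $\mathcal H(E,L) = \mathrm{End}_0(F')$ or $\mathrm{End}_0(F'(x))$; but $\deg F' = \deg F - 1$, so $F'(x)$ has degree $\deg F + 1$. Matching the parity of $\deg F$ with the requirement that $\mathcal H(E,L)$ be a genuine $\mathrm{End}_0$ bundle (rather than needing a further half-integer twist) forces the case distinction: when $\deg F = 1$ one takes $\mathrm{End}_0(F')$ (degree $0$), and when $\deg F = 0$ one must twist back up, giving $\mathrm{End}_0(F'(x))$ (again degree $1$ inside, but $\mathrm{End}_0$ trivial determinant). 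A cleaner way to see this is to invoke the reciprocity relation \eqref{reciprocityrelation}: applying $\mathcal H(-,L^*)$ to the candidate answer must return $E$, and this consistency check selects the correct twist.

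The main obstacle I anticipate is the bookkeeping in the second step — showing that the sheaf $E'$ produced by the Lagrangian $L$ really does coincide, as a subsheaf of $\mathrm{End}_0(F)\otimes\mathcal O(x)$, with the $\mathrm{End}_0$ of the rank-two Hecke transform and not with some other modification having the same determinant. This requires being careful that the "diagonal" part of $\mathrm{End}_0(F_x)$ preserving $D$ is exactly $L^{(2)}/L^{(1)}$ after the identifications, and that the $\epsilon$-module structure of $L$ matches the flag data of $F'$ at $x$; the two-consecutive-Hecke-transformations description in the Remark following Theorem \ref{maintheorem} should make this transparent, since the first Hecke transform of $E$ with respect to $F = \mathbb{K}\phi$ corresponds on the $F$-side precisely to the elementary modification along $D$.
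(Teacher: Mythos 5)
The paper states this proposition without proof, so there is nothing to compare your argument against; judged on its own, your strategy is sound and is the natural one. The decisive step is correct: in a basis of $F_x$ adapted to $D=\mathrm{Im}(\phi)=\mathrm{Ker}(\phi)$ one checks that $(\mathbb{K}\phi)^{\perp}=\{\psi\in\mathrm{End}_0(F_x)\mid \mathrm{Tr}(\psi\circ\phi)=0\}$ is exactly the trace-zero endomorphisms preserving $D$, and then the local-frame comparison of $E'=\ker(E\to E_{2x}/L)$ (generated near $x$ by $\phi$, $t\cdot h$, $t^{2}\cdot(\text{complement})$, with $h$ spanning the diagonal part) with $\mathrm{End}_0(F')$ for $F'$ generated by $e_1, te_2$ shows the two agree as subsheaves of $\mathrm{End}_0(F)(x)$; since they also agree away from $x$, this already proves $\mathcal{H}(E,L)=\mathrm{End}_0(F')$ on the nose. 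Your last step, however, is chasing a non-existent ambiguity: $\mathrm{End}_0(F'(x))$ and $\mathrm{End}_0(F')$ are canonically equal (twisting a rank-two bundle by a line bundle does not change $\mathrm{End}_0$, and likewise $\mathrm{Sym}^2(F'(x))\otimes\det(F'(x))^{-1}=\mathrm{Sym}^2(F')\otimes\det(F')^{-1}$), so there is no ``overall line-bundle twist'' left to pin down, and the sentence about the two candidates ``differing only by the center'' does not parse. The case distinction in the statement is purely a normalization: since $\deg F'=\deg F-1$, one rewrites the answer in terms of a rank-two bundle of degree $0$ or $1$ so as to read off the connected component, consistently with $w_2(\mathcal{H}(E,L))\equiv w_2(E)+1$ from Proposition \ref{StiefelwhitneyHEL}. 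Stating this explicitly, instead of the parity argument, would make your proof complete and clean.
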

\bigskip
\subsection{Rank four bundles}
Let $E$ be an orthogonal bundle of rank $4$ of trivial determinant, then $E=\sr Hom(F,G),$ where $F$ and $G$ are two rank two vector bundles such that $\det F=\det G$. We can assume that $\deg F=0 \x{ or } 1$ which distinguished the two connected components of the moduli space.  Note that the quadratic form is given by the determinant.\\
\begin{enumerate}
    \item {$\underline{ L\in {\rm O}\Sigma_1}:$}
    Let $\bb K\phi\subset E_x$ be an isotropic line. Let $L\subset E_{2x}$ be the Lagrangian subspace $$0\ra \epsilon (\bb K\phi)^\perp\ra L\ra \bb K\phi\ra 0. $$ Denote by $H_x={\rm Ker}(\phi)$ and $H'_x={\rm Im}(\phi)$. Consider the Hecke transformation $$ 0\ra F'\ra F\ra F_x/H_x\ra 0,$$ $$0\ra G'\ra G\ra G_x/H'_x\ra 0. $$ 

\begin{prop}
The Hecke transformation of $E$ with respect to $L$ is given by $$\al H(E,L)=\sr Hom(F',G').$$ 
\end{prop}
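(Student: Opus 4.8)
The plan is to make everything explicit in terms of the two rank-two bundles $F,G$ exactly as in the statement, and to show that the two Hecke transformations defining $F'$ and $G'$ are precisely the ones encoded by the Lagrangian $L$. First I would note that since $E = \sr Hom(F,G) = F^* \otimes G$, its fiber is $E_x = \mathrm{Hom}(F_x,G_x)$ and the quadratic form $q_1$ on $E_x$ is (up to the trivialization of $\det E = \det F^* \otimes \det G = \mathcal O_X$) the determinant $\phi \mapsto \det\phi$; an isotropic line $\bb K\phi$ is therefore spanned by a rank-one endomorphism, so $H_x = \mathrm{Ker}(\phi) \subset F_x$ is a line and $H'_x = \mathrm{Im}(\phi) \subset G_x$ is a line. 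Writing the local picture at $x$ with the uniformizer $\epsilon$, the Lagrangian $L$ with $\pi(L) = \bb K\phi$ is (by the uniqueness statement in the Remark following Proposition \ref{submoduleL}, since $\dim F = 1$) the unique one, namely $L = \{ a + \epsilon b \in E_{2x} \mid a \in \bb K\phi,\ b \in (\bb K\phi)^\perp \}$, and I would spell out $(\bb K\phi)^\perp = \{\psi \in \mathrm{Hom}(F_x,G_x) \mid \mathrm{Im}(\psi) \subset H'_x \text{ or } \mathrm{Ker}(\psi) \supset H_x\}$ —more precisely the $3$-dimensional space of $\psi$ with $\psi(H_x) = 0$ together with those with $\mathrm{Im}\,\psi \subseteq H'_x$; I would pin down $(\bb K\phi)^\perp$ concretely by a basis adapted to a splitting $F_x = H_x \oplus H_x''$, $G_x = H'_x \oplus (H'_x)''$.

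Next I would compute $\mathcal H(E,L) = E'(x)$ from the defining sequence $0 \to E' \to E \to E_{2x}/L \to 0$. The key is to identify $E'$ with $\sr Hom(F',G')(-x)$, where $F' = \mathrm{ker}(F \twoheadrightarrow F_x/H_x)$ and $G' = \mathrm{ker}(G \twoheadrightarrow G_x/H'_x)$. Since $F' \subset F$ and $G' \subset G$ agree with $F,G$ away from $x$, the sheaf $\sr Hom(F',G')$ sits between $\sr Hom(F,G)(-x) = E(-x)$ and $\sr Hom(F,G)(x) = E(x)$; concretely, $\sr Hom(F',G') = F'^* \otimes G'$ and one has $F'^* = \mathrm{ker}(F^* \to \text{(torsion at }x\text{)})$ dual to the sub, so $F'^*$ is generated near $x$ by $F^*$ together with sections with a pole along $H_x^{\perp}$-directions. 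I would therefore compute, in a local frame, the $\mathcal O_{2x}$-submodule $\sr Hom(F',G')(-x)_{2x} \subset E_{2x}$ and check it equals $L$. This is the heart of the argument and amounts to a $4\times 4$ linear-algebra bookkeeping with the dual numbers: an element of $\mathrm{Hom}(F',G')$ near $x$, written in the adapted frame, acquires an $\epsilon^{-1}$ from lowering $F'$ on the $H_x$-component and an $\epsilon$ from $G' \subset G$ on the complementary component; multiplying by the extra $\mathcal O(-x) = \epsilon$, one lands exactly in $\bb K\phi \oplus \epsilon(\bb K\phi)^\perp = L$.

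Once $E' \cong \sr Hom(F',G')(-x)$ is established, tensoring by $\mathcal O(x)$ gives $\mathcal H(E,L) = E'(x) \cong \sr Hom(F',G')$, which is the claim; and it is automatically orthogonal of trivial determinant by Theorem \ref{maintheorem}, consistently with $\det\sr Hom(F',G') = \det F'^{*\,\otimes 2} \otimes (\det F'^* \otimes \det G')^{\otimes ?}$ — more simply $\det\sr Hom(F',G') = (\det G')^2 \otimes (\det F')^{-2} \otimes \text{(rank factor)}$ trivial since $\deg\det F' = \deg\det F - 1 = \deg\det G - 1 = \deg\det G'$. I would close by remarking that this description is compatible with Proposition \ref{StiefelwhitneyHEL}: here $i = 1$, so $w_2$ changes parity, which matches the fact that the two Hecke transformations each drop a determinant degree by $1$, moving $E = \sr Hom(F,G)$ with $\deg F = 0$ or $1$ to the other connected component.

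The main obstacle I anticipate is the explicit matching $\sr Hom(F',G')(-x)_{2x} = L$ inside $E_{2x}$: one must be careful that the elementary modification defining $F'$ (a sub of $F$) contributes to $F'^*$ a \emph{saturation}, i.e. the dual modification goes the other way, so the combined effect on $\sr Hom(F',G') = F'^* \otimes G'$ is a modification up along the $H_x$-line and down along the image line — getting the directions and the single vs. double use of $\epsilon$ exactly right is the delicate point, and it is cleanest to do it after choosing local frames in which $\phi$ is the single Jordan block $E_{12}$.
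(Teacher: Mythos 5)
The paper states this proposition without proof, so there is nothing to compare against; judged on its own, your plan is correct and is essentially the only natural argument: reduce to a local computation in frames of $F$ and $G$ adapted to $H_x=\ker\phi$ and $H'_x=\mathrm{im}\,\phi$. Concretely, with $\phi=E_{12}$ one finds $(\bb K\phi)^\perp=\{\psi:\psi(H_x)\subseteq H'_x\}$, the local frame of $E'=\ker(E\to E_{2x}/L)$ has vanishing orders $\left(\begin{smallmatrix} t & 1\\ t^2 & t\end{smallmatrix}\right)$, and this coincides with $t\cdot\left(\begin{smallmatrix} 1 & t^{-1}\\ t & 1\end{smallmatrix}\right)=\sr Hom(F',G')(-x)$, exactly as you predict; the degrees also match ($E/E'$ has length $4=\deg E-\deg\sr Hom(F',G')(-x)$), so the identification is forced once the inclusion is checked at $x$. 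Two small corrections: the space $\{\psi:\psi(H_x)=0\}$ is $2$-dimensional, not $3$-dimensional --- the correct statement is that $(\bb K\phi)^\perp$ is the $3$-dimensional span of $\{\psi:\psi(H_x)=0\}$ and $\{\psi:\mathrm{im}\,\psi\subseteq H'_x\}$, i.e.\ $\{\psi:\psi(H_x)\subseteq H'_x\}$; and triviality of $\det\sr Hom(F',G')=(\det F')^{-2}\otimes(\det G')^{2}$ requires the equality of line bundles $\det F'=\det F(-x)=\det G(-x)=\det G'$, not merely of degrees. Neither affects the validity of the argument.
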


    \item {$\underline{ L\in {\rm O}\Sigma_2}:$} Let $L^{(1)}\subset E_x={\rm Hom}(F_x,G_x)$ be the isotropic plane associated to $L$. Then there exists either a line $D\subset G_x$ (or  $D\subset F_x$) such that 
    $$L^{(1)}={\rm Hom}(F_x,D) \;\; \x{(or }L^{(1)}={\rm Hom}(D,G_x)\x{)}.$$ 
    This two cases distinguished the two connected components of the maximal stratum ${\rm O}\Sigma_2$. In the first case we let $M\subset G_{2x}$ such that $L={\rm Hom}(F_{2x},M)$ (or $M\subset F_{2x}$ such that $L={\rm Hom}(M,G_{2x})$\,). Let  $G'$ (resp. $F'$) be the Hecke transformation of $G$ (resp. $F$) with respect to $M$. Then we have 
    \begin{prop}
The Hecke transformation of $E$ with respect to $L$ is given by $$\al H(E,L)=\sr Hom(F,G')\;\; \x{( resp. } \al H(E,L)=\sr Hom(F',G)\;\x{)}.$$ 
\end{prop}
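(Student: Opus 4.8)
The plan is to make both the isotropic plane $L^{(1)}$ and the Lagrangian module $L$ completely explicit, and then read off $\al H(E,L)$ as a kernel sheaf exactly as in the proof of Theorem \ref{maintheorem}. Since $L\in\mathrm{O}\Sigma_2(E_{2x})$ lies in the largest stratum and $r=4$ is even, $L$ is free of rank $2$ (torsion degree $r-2i=0$) and $L^{(1)}=\pi(L)$ is a maximal isotropic subspace of the $4$-dimensional split quadratic space $(E_x,\det)=(\mathrm{Hom}(F_x,G_x),\det)$. First I would recall the classical description of such subspaces: they are exactly the two rulings of the smooth quadric $\mathbb{P}(F_x)\times\mathbb{P}(G_x)\subset\mathbb{P}(\mathrm{Hom}(F_x,G_x))$ --- one ruling consists of the planes $\mathrm{Hom}(F_x,D)=\{\phi:\mathrm{im}\,\phi\subset D\}$ for a line $D\subset G_x$, the other of the planes $\{\phi:\phi|_\ell=0\}$ for a line $\ell\subset F_x$ --- and, by the first remark following Proposition \ref{decOsigma}, these two $\mathbb{P}^1$-families are precisely the two connected components of $\mathrm{O}\Sigma_2$, which accounts for the dichotomy in the statement.

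Next I would lift $L^{(1)}$ to $L$. In the first case $L^{(1)}=\mathrm{Hom}(F_x,D)$: for any rank-one free $\bb K[\epsilon]$-submodule $M\subset G_{2x}$ reducing to $D$ modulo $\epsilon$, the submodule $\mathrm{Hom}_{\bb K[\epsilon]}(F_{2x},M)\subset\mathrm{Hom}_{\bb K[\epsilon]}(F_{2x},G_{2x})=E_{2x}$ is free of rank $2$, isotropic (each of its elements has image in the cyclic module $M$, hence vanishing determinant) and projects onto $L^{(1)}$, so it is a Lagrangian lift of $L^{(1)}$. By Proposition \ref{submoduleL} the Lagrangian lifts of $L^{(1)}$ form a $\Lambda^2(L^{(1)})^*\cong\bb K$-torsor, whereas the admissible $M$ form a $G_x/D\cong\bb K$-family on which $M\mapsto\mathrm{Hom}_{\bb K[\epsilon]}(F_{2x},M)$ is injective; hence this correspondence is bijective and $L=\mathrm{Hom}_{\bb K[\epsilon]}(F_{2x},M)$ for a unique such $M$. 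The second case is entirely symmetric: $L=\{\phi:\phi|_N=0\}=\mathrm{Hom}_{\bb K[\epsilon]}(F_{2x}/N,G_{2x})$ for a unique rank-one free $N\subset F_{2x}$ reducing to $\ell$, and this $N$ is what the statement calls $M$ in that case.

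Then I would compute the transform. In the first case $F_{2x}$ is free, hence projective, so $\mathrm{Hom}_{\bb K[\epsilon]}(F_{2x},-)$ keeps $0\to M\to G_{2x}\to G_{2x}/M\to 0$ exact and identifies $E_{2x}/L$ with $\mathrm{Hom}_{\bb K[\epsilon]}(F_{2x},G_{2x}/M)$, the Hecke quotient map $E\to E_{2x}/L$ being ``take the $2$-jet at $x$ and postcompose with $G_{2x}\to G_{2x}/M$''. A local homomorphism $\phi:F\to G$ then lies in $E'=\ker(E\to E_{2x}/L)$ iff $\phi(F)\subset G'':=\ker(G\to G_{2x}/M)$, i.e. $E'=\sr{Hom}(F,G'')$; hence $\al H(E,L)=E'(x)=\sr{Hom}(F,G'')(x)=\sr{Hom}(F,G''(x))=\sr{Hom}(F,G')$ where $G'=G''(x)$ is the Hecke transform of $G$ along $M$. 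In the second case $\mathrm{Hom}_{\bb K[\epsilon]}(-,G_{2x})$ is exact ($\bb K[\epsilon]$ is a Frobenius algebra, so $G_{2x}$ is injective), whence $E_{2x}/L\cong\mathrm{Hom}_{\bb K[\epsilon]}(N,G_{2x})$ and the quotient map becomes ``$2$-jet, then restrict to $N$''. With $F'':=\ker(F\to F_{2x}/N)$ (the preimage of $N$ in $F$) one gets $E'=\{\phi\in\sr{Hom}(F,G):\phi(F'')\subset G(-2x)\}$, and the natural injection $\sr{Hom}(F,G)\hookrightarrow\sr{Hom}(F'',G)$ restricts to an isomorphism $E'\xrightarrow{\sim}\sr{Hom}(F'',G(-2x))=\sr{Hom}(F'',G)(-2x)$; therefore $\al H(E,L)=E'(x)=\sr{Hom}(F'',G)(-x)=\sr{Hom}(F''(x),G)=\sr{Hom}(F',G)$ with $F'=F''(x)$. (Alternatively, the second case follows from the first via the isometry $\sr{Hom}(F,G)\cong\sr{Hom}(G^*,F^*)$, $\phi\mapsto\phi^t$, which exchanges the two rulings, once one knows that Hecke transformation commutes with dualisation.)

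Nothing above is deep, and the only two points I expect to need genuine care are the second-order lifting in the middle step --- one must check that the Lagrangian module really equals $\mathrm{Hom}_{\bb K[\epsilon]}(F_{2x},M)$ and not just that it reduces to $\mathrm{Hom}(F_x,D)$ modulo $\epsilon$ --- and, in the second case, the bookkeeping of the twist by $\mathcal O(x)$: one verifies by a short local computation adapted to a trivialisation of $F$ matching $F''$ that every homomorphism $F''\to G(-2x)$ extends over $F$, which is precisely what turns $E'$ into $\sr{Hom}(F'',G)(-2x)$ and hence $\al H(E,L)$ into $\sr{Hom}(F',G)$.
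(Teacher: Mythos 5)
Your argument is correct, and it supplies a proof that the paper actually omits: the text preceding this proposition only asserts (without justification) that $L^{(1)}$ lies in one of the two rulings of the quadric in $\mathbb{P}(\mathrm{Hom}(F_x,G_x))$ and that $L$ has the form $\mathrm{Hom}(F_{2x},M)$, and then states the result with no proof environment at all. Your route — classifying the maximal isotropic planes, lifting to second order by matching the $\Lambda^2(L^{(1)})^*$-family of Proposition \ref{submoduleL} against the $G_x/D$-family of free lifts $M$ of $D$, and then computing $E'=\ker(E\to E_{2x}/L)$ as in Theorem \ref{maintheorem} — is exactly the one the paper's setup presupposes, and your resolution of the twist ($G'=G''(x)$, $F'=F''(x)$) is the only one consistent with $\deg\mathcal{H}(E,L)=0$. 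Two small points worth tightening if you write this up: the step ``injective, hence bijective'' between the two copies of $\mathbb{K}$ should invoke the fact that $M=\langle d+\epsilon g\rangle\mapsto L$ is \emph{linear} in $\bar g\in G_x/D$ (an injective map of sets between one-dimensional affine spaces need not be onto), which a one-line computation with $(\alpha+\epsilon\beta)\otimes(d+\epsilon g)$ confirms; and in the second case one should note that $\phi_{2x}|_N=0$ is equivalent to $\phi(F'')\subset G(-2x)$ because the $2$-jet map $F''\to N$ is surjective.
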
 
\end{enumerate}

\bigskip
\subsection{Rank six bundles}
Let $E$ be a rank $6$ orthogonal bundle. There exists a rank $4$ vector bundle $F$ such that 
$$\begin{cases}
E=\Lambda^2F & \x{ with } \det F=\al O_X \x{  or  }\\ E=\Lambda^2(F)(x) & \x{ with } \det F=\al O_X(-2x)   
\end{cases}$$
Note that $\det(\Lambda^2(F))=(\det F)^{\otimes 3}$ and that we have the following surjection $$\pi:\Lambda^2 F_{2x}\twoheadrightarrow (\Lambda^2 F)_{2x}=E_{2x}.$$

Let $L\subset E_{2x}$ be a Lagrangian submodule. Following the dimension of $L^{(1)}=L/\epsilon L$, which is $1$, $2$ or $3$, we get the following three cases:
\begin{enumerate}
    \item {$\underline{ L\in {\rm O}\Sigma_1}:$} In this case, there exists a non-zero $\phi\in E_x={\rm Hom}^{Skew}(F_x,F_x^*)$, isotropic and  such that $$0\ra \epsilon (\bb K\phi)^\perp\ra L\ra \bb K\phi\ra 0. $$

    Moreover, there exists a submodule $P\subset F_{2x}$ such that $\pi(\Lambda^2 P)$ is equal to $L$, and we have  $$ 0\ra \epsilon F_x\ra P\ra K\ra 0,$$ where  $K=\ker \phi\subset F_x$ which has dimension $2$.\\

\item{$\underline{ L\in {\rm O}\Sigma_2}:$}
In this case, there exists a non-zero $w\in F_x$    and $W\subset F_x$ an hyperplane containing $w$ such that $$0\ra \epsilon(w\wedge W)^\perp\ra L\ra w\wedge W\ra 0. $$ 
Moreover, one can sees that $L$ equals $\pi(\Lambda^2 P)$, where $$\begin{tikzcd}
0 \ar[r] 
  & \epsilon W \ar[r] \ar[d, hook] 
  & {P} \ar[r] \ar[d, hook] 
  & \bb Kw \ar[r] \ar[d, hook] 
  & 0 \\
0 \ar[r] 
  & \epsilon F_x \ar[r] 
  & F_{2x} \ar[r] 
  & F_x \ar[r] 
  & 0.
\end{tikzcd}$$

\item{$\underline{ L\in {\rm O}\Sigma_3^{(0)}}:$} In this case there exists $W\subset F_x$ hyperplane such that  $L=\pi(\tilde{P})$ wehre $\tilde P= \Lambda^2{P})$ and $$\begin{tikzcd}
0 \ar[r] 
  & \epsilon W \ar[r] \ar[d, hook] 
  & {P} \ar[r] \ar[d, hook] 
  & W\ar[r] \ar[d, hook] 
  & 0 \\
0 \ar[r] 
  & \epsilon F_x \ar[r] 
  & F_{2x} \ar[r] 
  & F_x \ar[r] 
  & 0.
\end{tikzcd}$$

\item{$\underline{ L\in {\rm O}\Sigma_3^{(1)}}$ }There exists  $w\in F_x$ non zero such that $L=\pi(\tilde{P}),$ where $\tilde{P}=P\wedge F_x$ and $$\begin{tikzcd}
0 \ar[r] 
  & \epsilon \bb Kw \ar[r] \ar[d, hook] 
  & {P} \ar[r] \ar[d, hook] 
  & \bb Kw \ar[r] \ar[d, hook] 
  & 0 \\
0 \ar[r] 
  & \epsilon F_x \ar[r] 
  & F_{2x} \ar[r] 
  & F_x \ar[r] 
  & 0.
\end{tikzcd}$$
\end{enumerate}

\begin{prop}    Let $E=\Lambda^2 F$ $(\x{resp.} (\Lambda^2F)(x)\;)$ and $L\in{\rm O}\Sigma$. Let $F'$ be the Hecke transformation of $F$ with respect to ${P}\subset F_{2x}$.
Then we have 
\begin{enumerate}
    \item $\underline{ L\in {\rm O}\Sigma_1}$: $$\al H(E,L)=(\Lambda^2F')(x) \;(\x{resp. }\Lambda^2(F'(x))\;).$$
    \item $\underline{ L\in {\rm O}\Sigma_2}$: $$\al H(E,L)=\Lambda^2(F'(x)) \;( \x{resp. }(\Lambda^2F'(x))(x)\;).$$
    \item $\underline{ L\in {\rm O}\Sigma_3^{(0)}}$: $$\al H(E,L)=(\Lambda^2F')(x) \;(\x{resp. }\Lambda^2(F'(x))\;).$$
    \item $\underline{ L\in {\rm O}\Sigma_3^{(1)}}$: $$\al H(E,L)=(\Lambda^2F'(x))(x) \;(\x{resp. }(\Lambda^2F'(2x))\;).$$
\end{enumerate} 
\end{prop}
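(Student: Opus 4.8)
The plan is to treat each of the four strata separately but always via the same mechanism: relate the Hecke transformation $\mathcal H(E,L)$ defined by the Lagrangian submodule $L$ to an ordinary Hecke transformation $F'$ of the rank four bundle $F$ with respect to the submodule $P \subset F_{2x}$, using the surjection $\pi : \Lambda^2 F_{2x} \twoheadrightarrow (\Lambda^2 F)_{2x} = E_{2x}$ and the fact, established in the preceding case analysis, that $L = \pi(\tilde P)$ where $\tilde P$ is either $\Lambda^2 P$ or $P \wedge F_x$. First I would fix notation: write the defining sequence $0 \to E' \to E \to E_{2x}/L \to 0$ with $\mathcal H(E,L) = E'(x)$, and similarly the defining sequence for $F'$, namely $0 \to F'' \to F \to F_x/K \to 0$ (or the appropriate quotient) with $F'$ the corresponding twist of $F''$. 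The key computational input is that wedge product is functorial: from $0 \to F'' \to F \to F_x/K \to 0$ one gets a filtration of $\Lambda^2 F$ with graded pieces $\Lambda^2 F''$, $F'' \otimes (F_x/K)$, and $\Lambda^2(F_x/K)$, and one needs to identify which of these torsion quotients is exactly $E_{2x}/L$ on the nose. This is where the precise description of $P$ — whether $P$ surjects onto a $2$-plane $K$, onto a hyperplane $W$, or onto a line $\mathbb K w$ — enters, together with the dimension of $L^{(1)}$.

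Concretely, for $L \in {\rm O}\Sigma_1$ I would argue as follows. Here $P$ fits in $0 \to \epsilon F_x \to P \to K \to 0$ with $\dim K = 2$, so $P = F(-x)$-type data: the Hecke transform $F'$ of $F$ with respect to $P$ satisfies $F' \otimes \mathcal O(?) \cong$ the bundle with $F'_x$ related to $K$. One computes $\det F' = \det F \otimes \mathcal O(-2x) \cdot$(twist), and then $\Lambda^2 F'$ has determinant $(\det F')^{\otimes 3}$; comparing with $\det E = (\det F)^{\otimes 3}$ pins down the twist by $\mathcal O(x)$ appearing in the statement $\mathcal H(E,L) = (\Lambda^2 F')(x)$. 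More structurally, the subsheaf $E' \subset E = \Lambda^2 F$ should be recognised as $\Lambda^2 F'$ up to the stated twist by chasing the inclusion $\Lambda^2 P \subset \Lambda^2 F_{2x}$ through $\pi$ and comparing with $E' = \ker(E \to E_{2x}/L)$; the diagram from Remark following Theorem \ref{maintheorem} (the two consecutive ordinary Hecke transformations) makes this bookkeeping systematic. The remaining three cases $L \in {\rm O}\Sigma_2$, ${\rm O}\Sigma_3^{(0)}$, ${\rm O}\Sigma_3^{(1)}$ are handled identically, the only change being the shape of $P$ (surjecting onto a hyperplane $W$, resp. onto $W$ with $\tilde P = \Lambda^2 P$, resp. onto a line $w$ with $\tilde P = P \wedge F_x$) and hence the number and placement of the $\mathcal O(\pm x)$ twists in the graded pieces of $\Lambda^2 F$; in each case one matches $E_{2x}/L$ with the appropriate torsion quotient and reads off the twist from a determinant computation plus the compatibility $\mathcal H(\mathcal H(E,L),L^*) = E$ of Remark \ref{reciprocity} as a consistency check.

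The main obstacle I expect is the ${\rm O}\Sigma_3^{(1)}$ case, where $\tilde P = P \wedge F_x$ rather than $\Lambda^2 P$, so $\tilde P$ is not simply the second exterior power of a subsheaf of $F_{2x}$ and the naive functoriality of $\Lambda^2$ does not directly produce it; one has to describe $\tilde P$ as the image of $P \otimes F_x \to \Lambda^2 F_{2x}$ and carefully track that $\pi(\tilde P)$ has the right length to be a Lagrangian (which the prior case analysis guarantees) and that the resulting $E'$ is $(\Lambda^2 F')(-x)$ before the global twist by $\mathcal O(2x)$, explaining the asymmetric answer $(\Lambda^2 F'(x))(x)$ versus $\Lambda^2 F'(2x)$ in the two sub-cases $\det F = \mathcal O_X$ and $\det F = \mathcal O_X(-2x)$. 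A secondary subtlety is simply keeping the two global forms of $E$ (namely $\Lambda^2 F$ versus $(\Lambda^2 F)(x)$) consistent throughout, since every twist on the $F$ side propagates cubically to the $E$ side; I would organise the whole proof around a single table of determinants to avoid sign and twist errors.
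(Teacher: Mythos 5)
The paper states this proposition without proof (as with the other low-rank examples in Section 5), so there is no argument of the authors' to compare yours against; I can only assess your plan on its own terms. The overall strategy is the right one and does lead to a proof: in each stratum one writes $E'=\ker(E\to E_{2x}/L)$ and $F'=\ker(F\to F_{2x}/P)$, shows that a suitable twist of $\Lambda^2 F'$ is contained in $E'$, and then concludes equality because both are full-rank subsheaves of $E$ of the same degree; the degree is pinned down exactly as you say, by $\det \Lambda^2 F'=(\det F')^{\otimes 3}$ together with the colength of $P$ in $F_{2x}$ (which is $2$, $4$, $2$, $6$ in the four cases), and the reciprocity relation of Remark \ref{reciprocity} is a good consistency check. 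Your identification of ${\rm O}\Sigma_3^{(1)}$ as the delicate case is also accurate: there $\tilde P=P\wedge F_x$ and the image of $\Lambda^2 F'$ in $E_{2x}$ is actually zero, so the inclusion into $E'$ is strict before twisting, which is exactly why the extra $\mathcal O(2x)$ appears.

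Two points need repair or completion. First, the filtration you invoke, with graded pieces $\Lambda^2 F''$, $F''\otimes(F_x/K)$ and $\Lambda^2(F_x/K)$, is valid for a short exact sequence of locally free sheaves but not for a torsion quotient: in case (1) the colength of $\Lambda^2 F'$ in $\Lambda^2 F$ is $3\cdot 2=6$, whereas your proposed graded pieces would contribute $4\cdot 2+\ell(\Lambda^2(F_x/K))$, so the lengths do not match. The determinant computation you also propose is the correct replacement and suffices; I would drop the filtration argument entirely. Second, the step you describe as ``chasing the inclusion $\Lambda^2 P\subset\Lambda^2 F_{2x}$ through $\pi$'' is where all the actual content sits, and it is not automatic: one must verify that the image of the relevant twist of $\Lambda^2 F'$ inside $E_{2x}$ lies in $L$, and this is most transparently done in a local basis adapted to $P$ (for instance $F'=\langle e_1, te_2, te_3, t^2e_4\rangle$ in case (2), giving $\Lambda^2(F'(x))=\langle t^{-1}e_1\wedge e_2,\,t^{-1}e_1\wedge e_3,\,e_1\wedge e_4,\,e_2\wedge e_3,\,te_2\wedge e_4,\,te_3\wedge e_4\rangle$, which one checks coincides with $E'(x)$). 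Until those four local verifications are written out, what you have is a correct roadmap rather than a proof; with them, the argument closes in every case.
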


\section{Tyurin's duality theorem}

\subsection{Moduli spaces of orthogonal bundles}

For simplicity of exposition, we assume here that $\mathbb{K}$ has characteristic $0$ and we   consider only 
special orthogonal bundles, i.e., principal $\mathrm{SO}_r$-bundles,
or, equivalently, orthogonal bundles $E$ with trivial determinant. For the precise definition of special orthogonal structure see, e.g., \cite[Remark 2.6]{Ramanan}. Moduli spaces for semistable principal $G$-bundles over $X$ were constructed in \cite{Ramanathan} for 
any semisimple algebraic group $G$. In the particular case $G= \mathrm{SO}_r$ the moduli spaces $\mathcal{M}(\mathrm{SO}_r)$ were studied, e.g., in \cite{Ramanan}, \cite{OS}. We recall that $\mathcal{M}(\mathrm{SO}_r)$ has two connected components
$$ \mathcal{M}(\mathrm{SO}_r) = \mathcal{M}^0(\mathrm{SO}_r)  \cup \mathcal{M}^1(\mathrm{SO}_r)$$
distinguished by the second Stiefel-Whitney class (see also subsection 4.2). We denote by 
$M \subset \mathcal{M}(\mathrm{SO}_r)$ the open dense subset corresponding to stable orthogonal bundles and
consider a universal orthogonal bundle $\mathcal{V}$ over the product $M \times X$\footnote{Strictly speaking
the universal bundle $\mathcal{V}$ only exists over $\widetilde{M}\times X$, where $\widetilde{M} \rightarrow M$ is a finite étale cover, 
but it can be shown that the fibration $\mathrm{O}\Sigma(\mathcal{V}_{2x})$ as defined in \eqref{fibrationOSigma} descends to $M$.}.
The bundle $\mathcal{V}$ comes equipped with a non-degenerate quadratic form $b: \mathcal{V} \times \mathcal{V} \rightarrow \mathcal{O}$ which induces an isomorphism $\mathcal{V} \rightarrow \mathcal{V}^* $.  For a fixed point $x \in X$ we denote by $\mathcal{V}_x$ the restriction 
of $\mathcal{V}$ to $M \times \{ x \}$
\bigskip
Consider the sheaf of principal parts $\mathcal{P}^1(\mathcal{V})$ associated to $\mathcal{V}$ \cite[\S 16.7]{Gro} which fits into the 
exact sequence
$$ 0 \longrightarrow \Omega^1_{M \times X} \otimes \mathcal{V} \longrightarrow \mathcal{P}^1(\mathcal{V}) 
\longrightarrow \mathcal{V} \longrightarrow 0 $$
and denote by $\mathcal{V}_2$ its push-out under the projection of $\Omega^1_{M \times X} =
p_M^*\Omega^1_{M} \oplus p_X^* \Omega^1_{X}$ onto the second factor $p_X^* \Omega^1_{X}$. So 
we obtain an exact sequence of bundles over $M \times X$
\begin{equation} \label{V2}
 0 \longrightarrow p_X^* \Omega^1_{X} \otimes \mathcal{V} \longrightarrow \mathcal{V}_2 
\longrightarrow \mathcal{V} \longrightarrow 0. 
\end{equation}
By functoriality of the formation of the sheaf of principal parts we obtain by restricting this
exact sequence to $\{E \} \times X$ the following
$$ 0 \longrightarrow \Omega^1_{X} \otimes E \longrightarrow \mathcal{P}^1(E)
\longrightarrow E \longrightarrow 0. $$
Moreover, by restricting \eqref{V2} to $M \times \{ x \}$ we obtain the extension
$$ 0 \longrightarrow \epsilon \mathcal{V}_x \longrightarrow \mathcal{V}_{2x} 
\longrightarrow \mathcal{V}_x \longrightarrow 0.$$ 
This extension is not split.
Note that we consider these vector bundles as $\mathcal{O}_M \otimes \mathcal{O}_{X,x}/ \mathfrak{M}^2 = \mathcal{O}_M[\epsilon]$-modules
and that the quadratic form on $\mathcal{V}$
induces an $\mathcal{O}_M[\epsilon]$-linear isomorphism with the exact sequence
$$ 0 \longrightarrow \epsilon \mathcal{V}^*_x \longrightarrow \mathcal{V}^*_{2x} 
\longrightarrow \mathcal{V}^*_x \longrightarrow 0.$$
As in the previous section denote by 
\begin{equation} \label{fibrationOSigma}
\pi : {\rm O}\Sigma(\mathcal{V}_{2x}) \longrightarrow M 
\end{equation}
the fibration in Grassmannians of Lagrangian $\mathbb{C}[\epsilon]$-modules, i.e., for a stable orthogonal bundle $E \in M$ the 
fiber $\pi^{-1}(E)$ equals $\mathrm{O}\Sigma(E_{2x})$.  We recall that both $M$ and $\mathrm{O}\Sigma(E_{2x})$
have two connected components.\\

If we denote $k = \lfloor \frac{r}{2} \rfloor$, then, as observed in Remark \ref{remlargeststratum}, the largest stratum
of the two connected components of  $\mathrm{O}\Sigma(\mathcal{V}_{2x})$ are  
$\mathrm{O}\Sigma_k(\mathcal{V}_{2x})$ and $\mathrm{O}\Sigma_{k-1}(\mathcal{V}_{2x})$, which we denote for
simplicity $\mathrm{O}\Sigma_k$ and $\mathrm{O}\Sigma_{k-1}$. By Proposition \ref{largeststratumissmooth},
$\mathrm{O}\Sigma_k$ and $\mathrm{O}\Sigma_{k-1}$ are smooth fibrations over $M$.
Consider the rational map  \( \al \pi' \) 
\begin{equation} \label{doublefibration}
\xymatrix{
& {\rm O}\Sigma_k \cup {\rm O}\Sigma_{k-1} \ar[ld]_\pi \ar@{-->}[rd]^{\pi'} & \\
M & & M
}
\end{equation}
defined by associating to any stable orthogonal bundle \( E \) and any isotropic submodule  \( L \in {\rm O}\Sigma_l(E_{2x}) \), for $l = k$ or 
$k-1$, the orthogonal Hecke transformation  $\mathcal{H}(E, L)$\footnote{It can be shown that if $E$ has a special orthogonal
structure, then $\mathcal{H}(E, L)$ is naturally equipped with a special orthogonal structure.}.  
Note that \( \al \pi' \) is only a rational map since $\mathcal{H}(E, L)$ is not necessarily stable. This observation leads us
to introduce the following notion.

\begin{defi}
Let $E$ be an orthogonal vector bundle over $X$. We say that $E$ is \emph{superstable} if every
orthogonal Hecke transform of $E$ with respect to any Lagrangian submodule at any point of $X$, is stable.
\end{defi}

\begin{prop}
The orthogonal bundle $E$ is superstable if and only if for every isotropic subbundle $F \subset E$
$$ \mu(F) = \frac{\deg(F)}{\mathrm{rk}(F)} < -1.$$
\end{prop}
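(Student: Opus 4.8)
The plan is to characterize superstability by studying exactly which destabilizing subbundles can appear after an orthogonal Hecke transformation. First I would note that $\mathcal{H}(E,L)$ and $E$ differ only at the point $x$: from the exact sequences $0 \to E(-2x) \to E' \to L \to 0$ and $\mathcal{H}(E,L) = E'(x)$, one sees that there is a natural identification $\mathcal{H}(E,L)|_{X \sm \{x\}} \cong E|_{X \sm \{x\}}$, and more precisely $E(-x) \subset \mathcal{H}(E,L) \subset E(x)$ (up to the chosen isomorphisms). Hence any subbundle $F' \subset \mathcal{H}(E,L)$ restricts away from $x$ to a subsheaf of $E$, and taking its saturation inside $E$ yields a subbundle $F \subset E$ with $\mathrm{rk}(F) = \mathrm{rk}(F')$ and $\deg(F) - \mathrm{rk}(F) \le \deg(F') \le \deg(F) + \mathrm{rk}(F)$; the extremes $\deg F' = \deg F \pm \mathrm{rk} F$ are attained precisely when $F'$ is of the form $F(\pm x)$ locally at $x$. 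Since destabilizing (i.e. non-negative degree) \emph{isotropic} subbundles of an orthogonal bundle control stability --- stability of an orthogonal bundle is equivalent to $\deg(F) < 0$ for every isotropic subbundle $F$, as $\mathrm{SO}_r$-instability is detected by isotropic flags --- the question reduces to bounding $\deg F'$ for isotropic $F' \subset \mathcal{H}(E,L)$.

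Next I would prove the ``only if'' direction. Suppose $E$ has an isotropic subbundle $F$ with $\mu(F) = \deg(F)/\mathrm{rk}(F) \ge -1$. I want to produce a point $x$ and a Lagrangian $L \in \mathrm{O}\Sigma(E_{2x})$ such that $\mathcal{H}(E,L)$ contains the isotropic subbundle $F(x)$, which then has degree $\deg(F) + \mathrm{rk}(F) \ge 0$, contradicting stability of $\mathcal{H}(E,L)$. Concretely, choose $x$ general (so that $F$ and $F^\perp$ are subbundles near $x$, i.e. $F_x \subset F^\perp_x \subset E_x$ is a flag of the expected dimensions) and take $L$ to be a Lagrangian submodule with $\pi(L) = F_x$; by Proposition \ref{submoduleL} such $L$ exist (parameterized by $\Lambda^2 F_x^*$, in particular one always exists, e.g. the one with $\varphi = 0$). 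Unwinding the definition $\mathcal{H}(E,L) = E'(x)$ with $E' = \ker(E \twoheadrightarrow E_{2x}/L)$, and using that $F^\perp_x$ maps into $L$ (more precisely $\epsilon F^\perp_x = L^{(2)}$), one checks that the Hecke transform performs, relative to the decomposition used in the proof of Proposition \ref{StiefelwhitneyHEL}, a twist by $+x$ on the $F$-part; hence $F(x) \subset \mathcal{H}(E,L)$ as a subbundle, and it remains isotropic since $F$ was. This shows non-superstability.

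For the ``if'' direction I would argue contrapositively: assume $\mathcal{H}(E,L)$ is unstable for some $x$ and some Lagrangian $L$, so it has an isotropic subbundle $F'$ with $\deg(F') \ge 0$. Take $F \subset E$ the saturation of $F'$ away from $x$ as above; then $\mathrm{rk}(F) = \mathrm{rk}(F')$ and $\deg(F) \ge \deg(F') - \mathrm{rk}(F) \ge -\mathrm{rk}(F)$, so $\mu(F) \ge -1$. The one subtlety is that $F$ need not be isotropic in $E$; however $F'$ isotropic implies $F'|_{X\sm\{x\}}$ is isotropic, hence its saturation $F$ in the orthogonal bundle $E$ satisfies $F \subset F^\perp$ generically and therefore everywhere (the isotropy condition $F \subset F^\perp$ is closed). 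Thus $E$ has an isotropic subbundle of slope $\ge -1$, which is exactly the negation of the stated inequality. Combining the two directions gives the equivalence.

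\textbf{Main obstacle.} The delicate point is the bookkeeping at $x$: one must show that along the whole family of Lagrangians $L$ (including the non-generic strata $\mathrm{O}\Sigma_i$), the only way to create a non-negative-degree isotropic subbundle in $\mathcal{H}(E,L)$ is via a ``$+x$ twist'' of an isotropic subbundle $F \subset E$, and conversely that for \emph{some} choice of $L$ such a twist is always realized when $\mu(F) \ge -1$. This requires relating the local structure of an isotropic subbundle $F' \subset \mathcal{H}(E,L)$ at $x$ to the flag $F_x \subset F^\perp_x \subset E_x$ and the submodule $L$, essentially the local computation already carried out in the proof of Theorem \ref{maintheorem} and in Proposition \ref{StiefelwhitneyHEL}; I expect no conceptual difficulty beyond carefully tracking the elementary transformations and the isotropy condition at the special point.
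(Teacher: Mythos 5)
Your argument is correct and is essentially the paper's approach: the paper simply says the proof ``goes exactly as in \cite[Proposition 9]{Ty}'', and what you write out is precisely the adaptation of Tyurin's elementary-transformation bookkeeping to the orthogonal setting (the inclusions $E(-x)\subset\mathcal H(E,L)\subset E(x)$ giving the degree bound $\deg F\ge \deg F'-\mathrm{rk}\,F'$ in one direction, and the Lagrangian $L=F_{2x}+\epsilon F_x^\perp$ producing the isotropic subsheaf $F(x)\subset\mathcal H(E,L)$ in the other). The two points you flag as delicate — isotropy of the saturation, and the choice of $L$ over $F_x$ — are handled correctly and need no further machinery.
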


\begin{proof}
The proof goes exactly as in \cite[Proposition 9]{Ty}.
\end{proof}    

\begin{prop} \label{opensuperstable}
If $g \geq 4$, then there exists an open dense subset $M^0 \subset M$ corresponding to superstable orthogonal bundles.
\end{prop}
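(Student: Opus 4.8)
The plan is to show that the condition characterizing superstability in the preceding proposition --- namely that every isotropic subbundle $F \subset E$ has slope $\mu(F) < -1$ --- defines an open dense subset of $M$ when $g \geq 4$. Openness is the easier half: superstability is violated exactly when $E$ admits an isotropic subbundle $F$ with $\mu(F) \geq -1$, and by the usual semicontinuity arguments for the existence of subsheaves of bounded slope (as in the construction of Quot schemes and the proof of openness of stability), the locus of orthogonal bundles admitting such an $F$ is closed in $M$. One must be slightly careful that $F$ ranges over \emph{isotropic} subbundles, but the isotropy condition is itself closed, so the relevant locus is the image under $\pi$ of a proper morphism from a suitable relative Quot-type scheme parameterizing pairs $(E, F)$ with $F$ isotropic of slope $\geq -1$; its image is therefore closed. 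Hence $M^0$ is open.

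For density, I would follow \cite[Proposition 9]{Ty} closely, the point being a dimension count. Let me sketch it. If $M^0$ were not dense, then on some component of $M$ every stable orthogonal bundle $E$ would carry an isotropic subbundle $F$ with $\mu(F) \geq -1$. Fix the rank $s = \mathrm{rk}(F)$ (with $1 \leq s \leq k$) and the degree $d = \deg(F) \geq -s$; there are finitely many choices, so one of them occurs for a subvariety of $M$ of dimension equal to $\dim M$. One then estimates, for fixed discrete invariants $(s,d)$, the dimension of the space of pairs $(E, F)$ where $E$ is a stable orthogonal bundle of trivial determinant and rank $r$ and $F \subset E$ is an isotropic subbundle of rank $s$ and degree $d$. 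Such a pair is equivalent to the data of $F$ (a vector bundle of rank $s$, degree $d$), together with the extension class describing $E$ as built from $F$, $F^\perp/F$, and $E/F^\perp \cong F^*$ via the orthogonal structure; concretely $E$ sits in $0 \to F^\perp \to E \to F^* \to 0$ and $0 \to F \to F^\perp \to Q \to 0$ with $Q = F^\perp/F$ an orthogonal bundle of rank $r - 2s$ and determinant $(\det F)^{\otimes 2}$. The dimension of the family of such $(E,F)$ is bounded by
$$
\dim \mathrm{Bun}_{s,d} + \dim \mathcal{M}(\mathrm{O}_{r-2s}) + \dim \mathrm{Ext}^1(F^*, F^\perp) + (\text{isotropy-compatible choices}),
$$
and each term is a standard Riemann--Roch computation. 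Comparing this with $\dim M = \dim \mathcal{M}(\mathrm{SO}_r) = (g-1)\dim \mathrm{SO}_r$, the inequality $\mu(F) = d/s \geq -1$ together with $g \geq 4$ forces the family of such pairs to have dimension strictly less than $\dim M$, which is the desired contradiction. Summing over the finitely many $(s,d)$ leaves the contradiction intact.

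The main obstacle I anticipate is bookkeeping the orthogonal constraint correctly in the dimension count: unlike the linear case of \cite{Ty}, one is not free to choose $F^\perp$ and the extension $E/F^\perp$ independently, since $F^\perp$ is determined by $F$ once $E$ is orthogonal, and the extension data must be compatible with the quadratic form (the relevant $\mathrm{Ext}$ groups carry a symmetry/duality, roughly halving some contributions, cf. the computation of $T_{[L]}\mathrm{O}\Sigma^{(m)} = \mathrm{Hom}_{\mathbb{K}[\epsilon]}^{0,skew}(L,L^*)$ in Proposition \ref{largeststratumissmooth}). Getting the factor of $\tfrac12$ in the right places is what makes the numerology work out and is precisely where the argument of \cite[Proposition 9]{Ty} has to be adapted rather than quoted verbatim; once the orthogonal Euler characteristics are written down, the inequality is mechanical and the hypothesis $g \geq 4$ enters exactly as in the vector bundle case to kill the low-genus exceptions.
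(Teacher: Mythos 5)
Your openness argument is fine in outline (boundedness of the family of isotropic subbundles of slope $\geq -1$ plus properness of the relative isotropic Quot scheme), but the density half is where the content lies, and there you have only asserted that a dimension count ``is mechanical'' without performing it. This is a genuine gap, and not a harmless one: in the count you propose, the space of orthogonal extensions of $F^*$ by $F^\perp$ compatible with the quadratic form is governed by $H^1$ of sheaves such as $F\otimes(F^\perp/F)$ and $\Lambda^2F$, and bounding these $H^1$'s by $-\chi$ is only valid when the corresponding $H^0$'s vanish. For special $F$ the $h^0$ terms jump, so one must stratify further by $h^0$ and re-run the estimate on each stratum; you yourself flag this bookkeeping as ``the main obstacle'' and then declare victory. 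As written, the inequality that the family of pairs $(E,F)$ has dimension $<\dim M$ is not established, and the role of the hypothesis $g\geq 4$ is never located. (Also, \cite[Proposition 9]{Ty} is the \emph{characterization} of superstability by slopes, quoted for the preceding proposition; it is not the density statement you want to imitate.)

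The paper avoids the dimension count entirely. For isotropic \emph{line} subbundles it invokes \cite[Lemma 4.1]{CCL} with $\delta=1$: a general $E$ is $1$-stable, which already forces $\deg F<-1$ for every isotropic line subbundle. For an isotropic subbundle $F$ of rank $r'\geq 2$ with $\mu(F)=d'/r'\geq -1$, one computes
$$\chi\bigl(\Lambda^2F\otimes K_X\bigr)=(r'-1)\Bigl(d'+r'\tfrac{g-1}{2}\Bigr)\;\geq\;(r'-1)r'\tfrac{g-3}{2}>0$$
precisely when $g\geq 4$ (this is where the genus hypothesis enters, quite differently from the vector bundle case). A nonzero section gives a skew-symmetric map $h:F^*\to F\otimes K_X$, and $i\circ h\circ i^*$ is a nonzero nilpotent skew-symmetric Higgs field on $E$; hence $E$ is not very stable. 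Since a general orthogonal bundle is very stable by \cite[Corollary 5.6]{BR}, the very stable $1$-stable bundles form the desired open dense $M^0$. If you want to salvage your approach you would need to carry out the stratified estimate honestly; otherwise the very-stable-bundle argument is the cleaner route.
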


\begin{proof}
By \cite[Lemma 4.1]{CCL} applied with $\delta = 1$, we know that there exists an open dense subset $M' \subset M$ such that any 
$E \in M'$ is $1$-stable, which means that $\frac{\deg(F) + 1}{\mathrm{rk}(F)} < 0$ for any isotropic 
subbundle $F \subset E$. In particular, for an isotropic line subbundle $F \subset E$ we obtain that $\mu(F) = \deg(F) < -1$. 

Consider now a bundle $E \in M'$ which is not superstable. Then there exists an isotropic subbundle $F \subset E$  of 
rank $r'$ with 
$\mu(F) = \frac{d'}{r'} \geq -1$. By the previous observation, we can conclude that $r' \geq 2$. Consider the bundle
$G = \Lambda^2 F \otimes K_X$. Then $\mathrm{rk}(G) = \frac{r'(r'-1)}{2}$ and $\deg(G) = (r'-1)(d' + r'(g-1))$. Thus
\begin{eqnarray*}
 \chi(G) & = &  (r'-1)(d' + r'(g-1)) + \frac{r'(r'-1)}{2}(1-g) \\
        & = & (r'-1) (d' + r'(\frac{g-1}{2})) \\
        & \geq & (r'-1)r'(\frac{g-3}{2}) > 0.
\end{eqnarray*}
We deduce that $\dim H^0(X,G) > 0$, so there exists a non-zero skew-symmetric map $h: F^* \rightarrow F \otimes K_X$. Then 
the composition
$$
\begin{tikzcd}
i \circ h \circ i^* : E \cong E^* \ar[r,"i^*"] & F^* \ar[r,"h"] & F \otimes K_X \ar[r, "i"] & E \otimes K_X 
\end{tikzcd}
$$
is a non-zero skew-symmetric Higgs field, which is nilpotent of order $2$, since $\ker(i^*) = F^\perp$ and $F \subset F^\perp$.
This implies that $E$ is not a very stable orthogonal bundle. But, by \cite[Corollary 5.6]{BR} a general orthogonal bundle is very stable.
Thus we conclude that if $E \in M'$ is very stable, then $E$ is also superstable. This yields the desired open dense subset $M^0 \subset M'\subset M$.
\end{proof}

    
\subsection{The duality theorem}
By construction of the open dense subset $M^0$ in Proposition \ref{opensuperstable} we obtain
a morphism induced by $\pi'$
$$ \pi': \pi^{-1}(M^0) \longrightarrow M $$
and define the open subset ${\rm O}\Sigma^0 = (\pi')^{-1}(M^0) \subset {\rm O}\Sigma_k \cup {\rm O}\Sigma_{k-1}$. Thus
by restricting \eqref{doublefibration} we obtain two morphisms
$$
\xymatrix{
& \rm{O} \Sigma^0 \ar[ld]_\pi \ar[rd]^{\pi'} & \\
M^0 & & M^0
}
$$

\begin{lemm}
For any $E' \in M^0$ the fiber $(\pi')^{-1}(E')$ is an open subset of 
${\rm O}\Sigma_k(E'_{2x}) \cup {\rm O}\Sigma_{k-1}(E'_{2x})$.
\end{lemm}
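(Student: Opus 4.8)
The plan is to use the reciprocity relation $\al H(\al H(E,L),L^{*})=E$ of Remark \ref{reciprocity}, which makes the orthogonal Hecke transformation reversible, in order to match the fiber of $\pi'$ over $E'$ with a fiber of $\pi$. Fix $E'\in M^0$ and a point $(E,L)\in (\pi')^{-1}(E')\subset {\rm O}\Sigma^0$, so that $E,E'\in M^0$, $L\in {\rm O}\Sigma_l(E_{2x})$ with $l\in\{k-1,k\}$, and $\al H(E,L)=E'$. By Remark \ref{reciprocity} the quadratic form identifies $E_{2x}/L$ with a Lagrangian submodule $L^{*}\subset E'_{2x}=\al H(E,L)_{2x}$ fitting into an exact sequence $0\lra L^{*}\lra E'_{2x}\lra L\lra 0$, and one has $\al H(E',L^{*})=E$.

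First I would check that $L\mapsto L^{*}$ preserves the stratum index, i.e.\ $L^{*}\in {\rm O}\Sigma_l(E'_{2x})$. Since $L^{*}\cong E_{2x}/L$ as a $\bb K[\epsilon]$-module, Lemma \ref{equalitytorsiondegree} shows $L^{*}$ has the same torsion degree as $L$; combined with the identification $L^{(2)}/L^{(1)}\cong F^{\perp}/F$ (of dimension $r-2l$) for $L\in {\rm O}\Sigma_l$ coming from Lemma \ref{Lagsubmod}, this forces $L^{*}\in {\rm O}\Sigma_l(E'_{2x})\subset {\rm O}\Sigma_k(E'_{2x})\cup {\rm O}\Sigma_{k-1}(E'_{2x})$. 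Together with the reciprocity relation this exhibits the assignment $\rho(E,L)=(\al H(E,L),L^{*})$ as an involution of the variety ${\rm O}\Sigma^0$: indeed $\rho(E,L)\in {\rm O}\Sigma^0$ since $\al H(E,L)=E'\in M^0$ and $\al H(E',L^{*})=E\in M^0$, and $\rho\circ\rho=\mathrm{id}$ since applying $*$ twice gives back $L$; moreover $\pi\circ\rho=\pi'$ by construction.

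Granting that $\rho$ is a morphism of schemes, the lemma follows: $(\pi')^{-1}(E')=(\pi\circ\rho)^{-1}(E')=\rho^{-1}\!\big(\pi^{-1}(E')\big)$, and $\rho$ identifies this with $\pi^{-1}(E')\cap {\rm O}\Sigma^0$. Since $\pi^{-1}(E')\cap({\rm O}\Sigma_k\cup {\rm O}\Sigma_{k-1})={\rm O}\Sigma_k(E'_{2x})\cup {\rm O}\Sigma_{k-1}(E'_{2x})$ by the construction of $\pi$ as a fibration (Proposition \ref{largeststratumissmooth}) and ${\rm O}\Sigma^0$ is open in ${\rm O}\Sigma_k\cup {\rm O}\Sigma_{k-1}$, the intersection $\pi^{-1}(E')\cap {\rm O}\Sigma^0$ is an open subset of ${\rm O}\Sigma_k(E'_{2x})\cup {\rm O}\Sigma_{k-1}(E'_{2x})$; explicitly it is $\{\,M'\ :\ \al H(E',M')\in M^0\,\}$, which is indeed open because the superstability of $E'$ makes $M'\mapsto \al H(E',M')$ a morphism from ${\rm O}\Sigma_k(E'_{2x})\cup {\rm O}\Sigma_{k-1}(E'_{2x})$ to $M$ and $M^0\subset M$ is open.

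The expected main obstacle is proving that $\rho$ (equivalently, $M'\mapsto (\al H(E',M'),(M')^{*})$) is a morphism of schemes, not merely a bijection on closed points. This amounts to relativising Remark \ref{reciprocity}: one has to produce $L^{*}$ in families as a sub-$\bb K[\epsilon]$-module of the non-split extension given by the relative Hecke transform of the universal bundle $\al V$, using $\al V$, the relative principal-parts sheaf $\al V_2$ of \eqref{V2}, functoriality of the elementary transformation, and the footnote device of first working over the finite étale cover $\widetilde M\to M$ carrying $\al V$ and then checking descent. Once this is in place, the stratum-index bookkeeping and the openness claims above are formal. Finally, although $M^0$ and all the relevant fibers have two connected components, the argument is insensitive to this, and the way $\rho$ permutes the components is precisely recorded by Proposition \ref{StiefelwhitneyHEL}.
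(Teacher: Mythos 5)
Your proof is correct and follows essentially the same route as the paper: the reciprocity relation $\al H(\al H(E,L),L^*)=E$ converts the fiber $(\pi')^{-1}(E')$ into $\{M' : \al H(E',M')\in M^0\}$, and Lemma \ref{equalitytorsiondegree} together with Proposition \ref{stratatorsiondegree} places $L^*$ in the top two strata of ${\rm O}\Sigma(E'_{2x})$. The extra care you take about $\rho$ being a morphism (rather than just a bijection on points) is a genuine point that the paper's two-line proof leaves implicit; it is settled later in the paper, where the proof of the duality theorem constructs the family $\mathcal{F}$ over ${\rm O}\Sigma^0\times X$ and realizes your $\rho$ as the automorphism $\psi$.
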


\begin{proof}
By the reciprocity relation \eqref{reciprocityrelation} we have the equivalence
$$ E' = \mathcal{H}(E,L) \quad \Longleftrightarrow \quad E = \mathcal{H}(E',L^*)$$
for any $E \in M^0$ and any $L \in {\rm O}\Sigma_k(E_{2x}) \cup {\rm O}\Sigma_{k-1}(E_{2x})$. On the other
hand by Lemma \ref{equalitytorsiondegree} and Proposition \ref{stratatorsiondegree} we obtain that $L^* \in {\rm O}\Sigma_k(E'_{2x}) \cup 
{\rm O}\Sigma_{k-1}(E'_{2x})$.
\end{proof}

The main result of this section is  the following:

\begin{theo}[Tyurin's duality]
The relative tangent bundles over $\rm{O}\Sigma^0$ of the two fibrations $\pi$ and $\pi'$ are dual to each other, i.e. we have
an isomorphism 
$$T_{\pi} = T^*_{\pi'}.$$
\end{theo}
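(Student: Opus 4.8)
The plan is to compute both relative tangent bundles pointwise at a point $[L] \in \mathrm{O}\Sigma^0$ lying over $E = \pi([L])$ and $E' = \pi'([L]) = \mathcal{H}(E,L)$, and to exhibit a canonical pairing between them coming from Serre duality on $X$. First I would use Proposition \ref{largeststratumissmooth}: since $[L]$ lies in the largest stratum of its connected component, the fiber $\pi^{-1}(E) = \mathrm{O}\Sigma(E_{2x})$ is smooth at $[L]$ with tangent space $T_{[L]}\pi^{-1}(E) = \mathrm{Hom}^{0,\mathrm{skew}}_{\mathbb{K}[\epsilon]}(L, L^*)$, where $L^* \cong E_{2x}/L$. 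Hence the fiber of $T_\pi$ at $[L]$ is precisely this space of skew-symmetric $\mathbb{K}[\epsilon]$-linear maps $\phi: L \to L^*$ with $\phi_0 = 0$. By the reciprocity relation \eqref{reciprocityrelation}, the fiber $\pi'^{-1}(E')$ through $[L]$ is canonically $\mathrm{O}\Sigma(E'_{2x})$ with the roles of $L$ and $L^*$ interchanged; by Remark \ref{reciprocity} we have the exact sequence $0 \to L^* \to E'_{2x} \to L \to 0$, so the fiber of $T_{\pi'}$ at $[L]$ is $\mathrm{Hom}^{0,\mathrm{skew}}_{\mathbb{K}[\epsilon]}(L^*, L)$, again with the $\phi_0 = 0$ condition.

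Next I would interpret infinitesimal deformations of $E$ globally. A tangent vector to $M$ at $E$ is an element of $H^1(X, \mathfrak{so}(E))$, where $\mathfrak{so}(E) \subset \mathrm{End}(E)$ is the bundle of skew-symmetric endomorphisms (skew with respect to $b$). The fibration $\pi: \mathrm{O}\Sigma_k \cup \mathrm{O}\Sigma_{k-1} \to M$ being smooth, its relative tangent bundle sits in $0 \to T_\pi \to T_{\mathrm{O}\Sigma} \to \pi^*T_M \to 0$; so the key is to realize $T_\pi$ and $T_{\pi'}$ as the two ``vertical'' pieces in a single exact sequence built from the local-to-global comparison. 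Concretely: deforming the pair $(E,L)$ while keeping $\mathcal{H}(E,L)$ fixed is the same as deforming $E'$ trivially and moving only $L^*$, i.e. a vertical tangent vector for $\pi'$; symmetrically a vertical tangent vector for $\pi$ moves $L$ with $E$ fixed. The local freedom to move the Lagrangian submodule $L$ inside $E_{2x}$ is measured by $\mathrm{Hom}^{0,\mathrm{skew}}_{\mathbb{K}[\epsilon]}(L,L^*)$, a finite-dimensional $\mathbb{K}$-vector space attached to the point $x$. The plan is to package the two Hecke exact sequences $0 \to E' \to E \to E_{2x}/L \to 0$ and $0 \to E'(-2x) \to E'(x)\otimes\mathcal{O}(-2x) \cdots$ together with their duals, apply $\mathcal{H}om(-, E'(x))$ or take $H^0/H^1$, and read off that the connecting maps identify $T_{\pi,[L]}$ and $T_{\pi',[L]}$ as mutually annihilating subspaces of a self-dual space.

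Then I would produce the pairing explicitly. Given $\phi \in \mathrm{Hom}^{0,\mathrm{skew}}_{\mathbb{K}[\epsilon]}(L,L^*)$ and $\psi \in \mathrm{Hom}^{0,\mathrm{skew}}_{\mathbb{K}[\epsilon]}(L^*,L)$, the composite $\psi \circ \phi: L \to L$ is a $\mathbb{K}[\epsilon]$-linear endomorphism; the condition $\phi_0 = \psi_0 = 0$ forces $\psi \circ \phi$ to land in $\epsilon \cdot \mathrm{End}_{\mathbb{K}[\epsilon]}(L)$, so that $\mathrm{tr}(\psi\circ\phi) \in \epsilon\mathbb{K} \cong \mathbb{K}$, where the trace is taken over $\mathbb{K}[\epsilon]$ and one extracts the $\epsilon$-coefficient. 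This gives a bilinear pairing $\langle \phi, \psi\rangle = \mathrm{tr}_\epsilon(\psi\circ\phi)$; one checks it is non-degenerate by a local computation on the strata, using the description $\mathrm{O}\Sigma_i \to \mathrm{OGr}(i,E_x)$ with fiber $\Lambda^2 F^*$ from Proposition \ref{decOsigma} and unwinding what $\mathrm{Hom}^{0,\mathrm{skew}}_{\mathbb{K}[\epsilon]}$ becomes over such a stratum. To see that this local pairing is the restriction of a global duality $T_\pi = T^*_{\pi'}$ of bundles over $\mathrm{O}\Sigma^0$ — rather than merely a fiberwise isomorphism — I would follow Tyurin (\cite[\S 5.2]{Ty}): express both $T_\pi$ and $T_{\pi'}$ as hypercohomology/pushforward sheaves of two-term complexes on $\mathrm{O}\Sigma^0 \times X$ built from the universal Hecke sequence over $\mathrm{O}\Sigma^0$, and invoke relative Serre duality along $X$ (together with the self-duality $\mathcal{V} \cong \mathcal{V}^*$ and $\mathcal{H}(E,L)_{2x}$ self-dual from Remark \ref{reciprocity}) to match them. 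The main obstacle I anticipate is precisely this last step: checking the skew-symmetry constraints interact correctly with Serre duality (the factor $\Lambda^2$ rather than a full $\mathrm{Hom}$) and verifying non-degeneracy uniformly across both connected components and both top strata $\mathrm{O}\Sigma_k$, $\mathrm{O}\Sigma_{k-1}$; the underlying vector-bundle case is Tyurin's, but the bookkeeping of isotropy/Lagrangian conditions and the $\phi_0 = 0$ truncation is where the real work lies.
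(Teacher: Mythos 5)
Your fiberwise identifications agree with the paper: using Proposition \ref{largeststratumissmooth} and the reciprocity relation, the fiber of $T_{\pi}$ at $(E,L)$ is $\mathrm{Hom}^{0,\mathrm{skew}}_{\mathbb{K}[\epsilon]}(L,L^*)$ and that of $T_{\pi'}$ is $\mathrm{Hom}^{0,\mathrm{skew}}_{\mathbb{K}[\epsilon]}(L^*,L)$, and the duality between them is indeed the $\epsilon$-coefficient of the $\mathbb{K}[\epsilon]$-trace of the composite. Where you diverge is the globalization. The paper does \emph{not} use Serre duality along $X$ at any point: it builds the universal family $\mathcal{F}=\ker\bigl((\pi\times\mathrm{id})^*\mathcal{V}\to\mathcal{Q}\bigr)$ over $\mathrm{O}\Sigma^0\times X$, shows that $\pi'$ factors as $\pi\circ\psi$ for the automorphism $\psi(E,L)=(E',L^*)$, and identifies $\psi^*\mathcal{Q}\cong\mathcal{Q}^*\otimes\mathcal{L}$; this gives $T_{\pi}=\mathcal{H}om^{0,\mathrm{skew}}_{\mathcal{O}[\epsilon]}(\mathcal{Q}^*,\mathcal{Q})$ and $T_{\pi'}=\psi^*T_{\pi}=\mathcal{H}om^{0,\mathrm{skew}}_{\mathcal{O}[\epsilon]}(\mathcal{Q},\mathcal{Q}^*)$ globally, after which the duality is pure $\mathbb{K}[\epsilon]$-linear algebra at the point $x$. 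Your plan to realize $T_{\pi}$ and $T_{\pi'}$ as pushforwards of two-term complexes and invoke relative Serre duality along $X$ is a wrong turn: both relative tangent bundles are concentrated at $x$ (they are Hom-spaces of finite-dimensional $\mathbb{K}[\epsilon]$-modules), so there is nothing for Serre duality on $X$ to act on; what is actually needed is the universal-family bookkeeping above to see that the fiberwise spaces glue to the bundles $\mathcal{H}om^{0,\mathrm{skew}}(\mathcal{Q}^*,\mathcal{Q})$ and $\mathcal{H}om^{0,\mathrm{skew}}(\mathcal{Q},\mathcal{Q}^*)$.

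One concrete error: your claim that $\phi_0=\psi_0=0$ forces $\psi\circ\phi$ to land in $\epsilon\cdot\mathrm{End}_{\mathbb{K}[\epsilon]}(L)$ is false. When $L$ is free (the top stratum for $r$ even), the condition $\phi_0=0$ is vacuous, $\mathrm{Hom}^{\mathrm{skew}}_{\mathbb{K}[\epsilon]}(L,L^*)=\Lambda^2_{\mathbb{K}[\epsilon]}L^*$, and the trace of $\psi\circ\phi$ can be a unit of $\mathbb{K}[\epsilon]$ (take $\phi=e_1^*\wedge e_2^*$, $\psi=e_1\wedge e_2$). This does not sink the pairing --- extracting the $\epsilon$-coefficient of the trace is the standard Frobenius form on $\mathbb{K}[\epsilon]$ and still gives a perfect $\mathbb{K}$-pairing on free modules --- but your stated justification must be dropped, and the non-degeneracy in the almost-free case ($r$ odd, where $\mathrm{Hom}^0$ is a genuine constraint) still needs the explicit check you defer; the paper itself only asserts this final step ("clearly dual"), so you are not behind the paper there, but that verification, not the Serre-duality formalism, is the remaining content.
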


\begin{proof}
The proof is similar to \cite[\S 5.2]{Ty}. For the convenience of the reader, we provide a full proof in our case. First,
we will construct over ${\rm O}\Sigma^0 \times X$ a family of orthogonal bundles inducing $\pi'$ as classifying morphism. We can
consider the vector bundle $\mathcal{V}_{2x}$ over $M^0 \subset M$ as a torsion sheaf supported on $M^0 \times \{ x \} \subset
M^0 \times X$. We denote by $\pi^* \mathcal{V}_{2x} \twoheadrightarrow \mathcal{Q}$ the universal quotient bundle over 
${\rm O}\Sigma^0 \times \{ x \} \subset {\rm O}\Sigma^0 \times X$. Then we consider the kernel
$$ \mathcal{F} = \ker \left( (\pi \times \mathrm{id}_X)^* \mathcal{V} \twoheadrightarrow   \pi^* \mathcal{V}_{2x} \twoheadrightarrow \mathcal{Q}  \right).$$
So we have an exact sequence over ${\rm O}\Sigma^0 \times X$
\begin{equation} \label{exseqF}
0 \longrightarrow  \mathcal{F}   \longrightarrow (\pi \times \mathrm{id}_X)^* \mathcal{V}  \longrightarrow  \mathcal{Q}  \longrightarrow 0.
\end{equation}
We then restrict \eqref{exseqF} to ${\rm O}\Sigma^0 \times \mathrm{Spec}(\mathcal{O}/\mathfrak{M}^2) = 
{\rm O}\Sigma^0 \times \{ 2x \}$, i.e. we take the tensor product with the torsion sheaf $\mathrm{pr}_X^*(\mathcal{O}/\mathfrak{M}^2)$,
and then take direct image onto ${\rm O}\Sigma^0 \times \{ x \}$. So we obtain the following long exact sequence
over ${\rm O}\Sigma^0 \times \{ x \} = {\rm O}\Sigma^0$
$$ 0 \longrightarrow \mathrm{Tor}_1^{{\rm O}\Sigma^0 \times X}( \mathcal{Q},\mathrm{pr}_X^*(\mathcal{O}/\mathfrak{M}^2) )  
\longrightarrow \mathcal{F}_{|{\rm O}\Sigma^0 \times \{ 2x \}} \longrightarrow  \pi^* \mathcal{V}_{2x} \longrightarrow \mathcal{Q} \longrightarrow 0.$$
Similarly as in \cite{Ty}, one can show that 
$$ \mathrm{Tor}_1^{{\rm O}\Sigma^0 \times X}( \mathcal{Q},\mathrm{pr}_X^*(\mathcal{O}/\mathfrak{M}^2) )  = \mathcal{Q}. $$
Using the non-degenerate quadratic form on $\mathcal{V}_{2x}$ and the fact that $\mathcal{Q}$ parameterizes quotients by Lagrangian submodules, 
we obtain the following two exact sequences of vector bundles over ${\rm O}\Sigma^0$
$$ 0 \longrightarrow \mathcal{Q}^*  \longrightarrow \pi^* \mathcal{V}_{2x}  \longrightarrow \mathcal{Q} \longrightarrow 0$$
$$ 0 \longrightarrow \mathcal{Q}  \longrightarrow  \mathcal{F}_{|{\rm O}\Sigma^0 \times \{ 2x \}}  \longrightarrow \mathcal{Q}^* 
\longrightarrow 0$$
By construction of $\mathcal{F}$ we see that the classifying map of $\mathcal{F} \otimes \mathrm{pr}_X^* \mathcal{O}(x)$ is the morphism
$\pi'$. We also deduce that the morphism $\pi'$ factorizes as follows
$$
\begin{tikzcd}
{\rm O}\Sigma^0 \ar[d, "\pi"] \ar[rd, "\pi'"] \ar[r, "\psi"] & {\rm O}\Sigma^0 \ar[d, "\pi"] \\
M^0 & M^0
\end{tikzcd}
$$
where $\psi$ is the automorphism of ${\rm O}\Sigma^0$ defined by 
$$ \psi(E,L) = (E', L^*)  $$
with $E' = \mathcal{H}(E,L)$ and $L^* \in {\rm O}\Sigma_k(E'_{2x}) \cup 
{\rm O}\Sigma_{k-1}(E'_{2x})$. Then there exists a line bundle $\mathcal{L}$ over
${\rm O}\Sigma^0$ such that 
$$ (\pi')^*(\mathcal{V}_{2x}) = \mathcal{F}_{|{\rm O}\Sigma^0 \times \{ 2x \}} \otimes \mathcal{L} $$
and such that we have an isomorphism of exact sequences
$$\xymatrix{0\ar[r] & \psi^* \mathcal{Q}^* \ar[d]^\sim \ar[r] & \psi^*\pi^* \mathcal{V}_{2x} \ar[r]\ar[d]^\sim & 
\psi^* \mathcal{Q} \ar[r]\ar[d]^\sim & 0  \\ 
0\ar[r] & \mathcal{Q} \otimes \mathcal{L} \ar[r] & \mathcal{F}_{|{\rm O}\Sigma^0 \times \{ 2x \}} \otimes \mathcal{L}  \ar[r]  & 
\mathcal{Q}^* \otimes \mathcal{L} \ar[r] & 0. }$$
Moreover, by Proposition \ref{largeststratumissmooth} the relative tangent bundle $T_\pi$ equals
$$ T_\pi = Hom_{\mathcal{O}[\epsilon]}^{0,skew}(\mathcal{Q}^*, \mathcal{Q}).$$
On the other hand we have 
$$T_{\pi'} = \psi^* T_\pi = Hom_{\mathcal{O}[\epsilon]}^{0,skew}(\mathcal{Q}, \mathcal{Q}^*).$$
These two bundles are clearly dual to each other.
\end{proof}

\end{document}